\DeclareMathAlphabet{\mathpzc}{OT1}{pzc}{m}{it}
\newcommand{\cK}{\mathcal{K}}
\newcommand{\cM}{\mathcal{M}}
\newcommand{\cN}{\mathcal{N}}
\newcommand{\cT}{\mathcal{T}}
\newcommand{\cV}{\mathcal{V}}
\newcommand{\bbE}{\mathbb{E}}
\newcommand{\bbR}{\mathbb{R}}
\newcommand{\bbS}{\mathbb{S}}
\newcommand{\bPi}{\bm{\Pi}}
\newcommand{\bA}{\bm{A}}
\newcommand{\bD}{\bm{D}}
\newcommand{\bF}{\bm{F}}
\newcommand{\bI}{\bm{I}}
\newcommand{\ba}{\bm{a}}
\newcommand{\bd}{\bm{d}}
\newcommand{\bg}{\bm{g}}
\newcommand{\br}{\bm{r}}
\newcommand{\bv}{\bm{v}}
\newcommand{\bw}{\bm{w}}
\newcommand{\bx}{\bm{x}}
\newcommand{\by}{\bm{y}}
\newcommand{\bz}{\bm{z}}
\NewDocumentCommand{\norm}{mG{2}}{\big\|#1\big\|_{#2}}
\newcommand{\trans}{\top}
\newcommand{\trsp}[1]{#1^\trans}
\newcommand{\argmin}{\mathop{\rm argmin}}
\NewDocumentCommand{\seqp}{mG{n}}{{#1}_1-\cdots+ {#1}_{#2}}
\NewDocumentCommand{\seqm}{mG{n}}{{#1}_1-\cdots- {#1}_{#2}}
\newcommand{\myparagraph}[1]{\smallskip\noindent\textbf{#1.}}
\newtheorem{theorem}{Theorem}
\newtheorem{prop}{Proposition}
\newtheorem{lemma}{Lemma}
\theoremstyle{definition}
\newtheorem{definition}{Definition}
\newtheorem{example}{Example}
\theoremstyle{remark}
\title{Global Linear and Local Superlinear Convergence of IRLS for Non-Smooth Robust Regression}
\author{%
  Liangzu Peng \\
  Mathematical Institute for Data Science\\
  Johns Hopkins University\\
  \texttt{lpeng25@jhu.edu} \\
  % examples of more authors
  \And
  Christian K\"ummerle\\
  Department of Computer Science \\
  University of North Carolina at Charlotte \\
  \texttt{kuemmerle@uncc.edu} \\
  \And
  Ren\'e Vidal \\
  Mathematical Institute for Data Science \\
  Johns Hopkins University\\
  \texttt{rvidal@jhu.edu}
}
\begin{document}

\maketitle

%\begin{abstract}
%  In this paper we consider the robust regression problem in $\ell_p$ norm, with $p\in(0,1]$, and a variant of \textit{iteratively reweighted least-squares} (IRLS) to solve this problem. For the case $p=1$ we prove that this IRLS variant converges globally at a linear rate, under mild conditions. Under a similar condition and for $p\in(0,1)$ we prove that it still converges, now locally at a superlinear rate; the rate is nearly quadratic if $p\to 0$. We apply this IRLS to three applications, showing respectively that (1) it performs well using minimal samples, while other methods fail, (2) it is $20$-$100$ times faster than a standard state-of-the-art method that uses CVX, and (3) it recovers a sparsely corrupted face image with satisfactory visual quality.
%\end{abstract}

\begin{abstract}
  We advance both the theory and practice of robust $\ell_p$-quasinorm regression 
  for $p \in (0,1]$ by using novel variants of \textit{iteratively reweighted least-squares} (IRLS) to solve the underlying non-smooth problem. In the convex case, $p=1$, we prove that this IRLS variant converges globally at a linear rate under a mild, deterministic condition on the feature matrix called the \textit{stable range space property}. In the non-convex case, $p\in(0,1)$, we prove that under a similar condition, IRLS converges locally to the global minimizer at a superlinear rate of order $2-p$; the rate becomes quadratic as $p\to 0$. We showcase the proposed methods in three applications: real phase retrieval, regression without correspondences, and robust face restoration. The results show that (1) IRLS can handle a larger number of outliers than other methods, (2) it is faster than competing methods at the same level of accuracy, (3) it restores a sparsely corrupted face image with satisfactory visual quality. \url{https://github.com/liangzu/IRLS-NeurIPS2022}
\end{abstract}

\section{Introduction}
Given a feature matrix $\bA\in\bbR^{m\times n}$ with $m\gg n$ and a response vector $\by\in\bbR^m$, the problem
\begin{align}\label{eq:Lp}
	\min_{\bx \in\bbR^{n}} \norm{\bA \bx - \by}{p}
\end{align}
of $\ell_p$-regression has a variety of different applications depending on the choice of $\|\bv\|_p = \big(\sum_{i} |\bv_i|^p \big)^{1/p}$. While $p=2$ corresponds to standard linear regression, the choice of $p>2$ arises naturally in semi-supervised learning on graphs \cite{ElAlaoui-ABLpLaplacian2016,Adil-NeurIPS2019,Slepcev-SIAMAnalysis19}, and a lot of activity has been dedicated recently to the computational complexity analysis for the case $p>1$ \cite{Bubeck-STOC2018,Adil-SODA2019,Jambulapati-arXiv2021}.
%We assume there is some $\bx^*\in\bbR^n$ such that the residual $\br^*:=\bA \bx^* - \by$ is $k$-sparse. While a few theoretical computer scientists considered the case $p> 2$ \cite{Adil-NeurIPS2019} or $p>1$ \cite{Bubeck-STOC2018,Adil-SODA2019,Jambulapati-arXiv2021}, we assume $p\in(0,1]$.

In this paper, we assume there is a coefficient vector $\bx^*\in\bbR^n$ such that the residual $\br^*:=\bA \bx^* - \by$ is $k$-sparse, in which case a choice of $p \in (0,1]$ is of interest. Indeed, for the convex and non-smooth case $p=1$, \eqref{eq:Lp} is known as \textit{least absolute deviation}, which dates back to the time of Boscovich around in the middle of the 18th century \cite{Boscovich-1757,Plackett-Biometrika1972}. Since then, it has been known intuitively that \eqref{eq:Lp} ($p=1$) is robust to \textit{large but few measurement errors} (quoting \cite{Wright-2020}), i.e., that \eqref{eq:Lp} is robust to outliers. 

The algorithmic and theoretical understanding of \eqref{eq:Lp} for $p=1$ has been of long-standing interest to statisticians, which has led to a vast literature spanning from the classic 1964 paper of Huber \cite{Huber-1964} to recent contributions \cite{Candes-FOCS2005,Bhatia-NeurIPS2015,Bhatia-NeurIPS2017,Yang-JMLR2017,Durfee-COLT2018,Suggala-COLT2019,Mukhoty-AISTATS2019,Pesme-NeurIPS2020,Chen-COLT2021,Parulekar-arXiv2021}. That being said, for $p=1$, there is a close, but somewhat underexplored connection between \eqref{eq:Lp} and the \textit{basis pursuit} problem  \cite{Candes-TIT2005}, and more specifically, the theoretical \cite{Candes-CPAM2006,Donoho-CPAM2006,Cohen-JAMS2009} and algorithmic aspects \cite{Blumensath-ACHA2009,Daubechies-CPAM2010,Kummerle-NeurIPS2021} of compressed sensing \cite{Foucart-2013} (see also Section \ref{subsection:connection-CS}). For $p\in(0,1)$, problem \eqref{eq:Lp} is non-smooth and non-convex, and generally less well understood. 
% The non-convex and non-smooth problem \eqref{eq:Lp} for $p\in(0,1)$ is also not as well understood. 
To the best of our knowledge, the only paper that considered \eqref{eq:Lp} with $p\in(0,1)$ is \cite{Chartrand-ICASSP2007}, where the authors presented a condition (based on \textit{restricted isometry constants}) that guarantees exact recovery of $\bx^*$ from \eqref{eq:Lp}. Other related works come either from the compressed sensing literature, where an $\ell_p$ (and noisy) version of basis pursuit has been considered as a sparsity-promoting formulation \cite{Chartrand-SPL2007,Chartrand-2008,Foucart-ACHA2009,Daubechies-CPAM2010,Wang-TIT2011,Sun-ACHA2012,Ba-TSP2013,Zheng-TIT2017}, or from the literature of matrix recovery/completion, where the \textit{Schatten-$p$} norm has come into play as a non-convex surrogate for rank minimization \cite{Marjanovic-TSP2012,Mohan-JMLR2012,Nie-AAAI2012,Kummerle-JMLR2018,Giampouras-NeurIPS2020,Kummerle-ICML2021}. A key message from these works is that $\ell_p$ or Schatten-$p$ minimization with $p\in(0,1)$ offers better information-theoretic properties (e.g., requires fewer samples for exact recovery) than minimization with $p=1$.
\begin{algorithm}[t]
	\SetAlgoLined
	\DontPrintSemicolon
	%	\KwResult{Write here the result }
	%	initialization\;
	Input: $\bA=\trsp{[\ba_1, \dots,\ba_m]}\in\bbR^{m\times n}, \by=\trsp{[y_1,\dots,y_m]}\in\bbR^{m}$, $p\in(0,1]$;
	
	Weight initialization $\bw^{(0)}\gets[w^{(0)}_1;\dots; w^{(0)}_m]\in\bbR^m$; \tcp*{Instead, one can initialize some vector $\bx^{(1)}$.} % \tcp*{$\bw^{(0)}\gets\trsp{[1,\dots,1]}$ is a typical choice}
	For $t\gets 0,1,\dots$:
	\vspace{-0.3cm}
	\begin{align}
			\bx^{(t+1)} &\gets \argmin_{\bx\in\bbR^n} \sum\nolimits_{i=1}^{m} w^{(t)}_{i} (\trsp{\ba_i}\bx - y_i)^2 \label{eq:x-update} \\
		\textnormal{Update}& \textnormal{ \textcolor{blue}{$\epsilon^{(t+1)}$} suitably based on $\bx^{(t+1)}$, $\bA$, and $\by$;}  \textnormal{\tcp*{See Section \ref{subsection:IRLS-basic} for details. }}   \nonumber \\
			w^{(t+1)}_i &\gets \max\big\{ |\trsp{\ba_i}\bx^{(t+1)} - y_i|, \textnormal{\textcolor{blue}{$\epsilon^{(t+1)}$}} \big\}^{p-2} \ \ \ \forall i=1,\dots,m \label{eq:w-update}
	\end{align}
	\vspace*{-0.45cm}
	\caption{IRLS for $\ell_p$-Regression ($\texttt{IRLS}_p$) } \label{algo:IRLS}
\end{algorithm}

Here, we study an \textit{iteratively reweighted least-squares} method ($\texttt{IRLS}_p$) to solve \eqref{eq:Lp} with $p\in(0,1]$. As listed in Algorithm \ref{algo:IRLS}, $\texttt{IRLS}_p$ alternates between solving a weighted least-squares problem \eqref{eq:x-update} and updating the weights \eqref{eq:w-update}; see Section \ref{subsection:IRLS-basic} for more elaboration on IRLS. The simplicity of this idea (with its impressive performance) justifies its popularity in many machine learning \cite{Lerman-FoCM2015,Tsakiris-JMLR2018,Zhu-NeurIPS2018,Ding-ICML2019} and computer vision \cite{Aftab-WCACV2015,Dong-PRL2019,Iwata-ECCV2020,Ding-CVPR2020,Yang-RA-L2020} applications. Based on recent advances on IRLS \cite{Daubechies-CPAM2010,Kummerle-NeurIPS2021}, we make several contributions for understanding \eqref{eq:Lp} and $\texttt{IRLS}_p$. We state our contributions next.

\myparagraph{The Stable Range Space Property (Section \ref{subsection:RSP})} We put forward the use of the (stable) \textit{range space property} (RSP) for studying the robust regression problem \eqref{eq:Lp}. The stable RSP was proposed by \cite{Zhao-SIAM-J-O2012} in a different context to analyze a compressed sensing algorithm for solving a weighted basis pursuit problem at each iteration. %Similarly to what happened in compressed sensing 
In analogy to the \textit{nullspace property} in compressed sensing \cite{Cohen-JAMS2009,Foucart-2013}, we show that the RSP is a necessary and sufficient condition for guaranteeing that the $\ell_p$-regression problem in \eqref{eq:Lp} admits $\bx^*$ as its unique solution (Proposition \ref{prop:RSP=exact-recovery}). Moreover, we show that if $\bA\in\bbR^{m\times n}$ has i.i.d. $\cN(0,1)$ entries and if $m$ is large enough, then the (stable) RSP holds with high probability (Proposition \ref{prop:Gaussian-RSP}). This justifies 
%the use of the stable RSP 
its use as the core assumption in our analysis.

\myparagraph{Global Linear Convergence (Section \ref{subsection:GLC-noiseless})} We prove in Theorem \ref{theorem:GLC-L1} that, under a stable RSP assumption and with $\epsilon^{(t)}$ suitably updated, the $\texttt{IRLS}_p$ Algorithm \ref{algo:IRLS} with $p=1$ converges linearly to the ground-truth $\bx^*$ from any initial weight $\bw^{(0)}$ (or equivalently from any initial point $\bx^{(1)}$). Note that while (accelerated) first-order methods (e.g., (sub-)gradient descent and proximal algorithms) can also solve the convex and non-smooth problem \eqref{eq:Lp} with $p=1$, they exhibit, in general, (global) sub-linear rates at best \cite{Beck-OptBook2017,Nesterov-2018}. To our knowledge, \cite{Mukhoty-AISTATS2019} is the only paper that claims global linear convergence of a different IRLS variant for \eqref{eq:Lp} with $p=1$; we compare our results with the ones of \cite{Mukhoty-AISTATS2019} in Section \ref{subsection:GLC-noiseless}. On the other hand, Theorem \ref{theorem:GLC-L1} is inspired by the IRLS method of \cite{Kummerle-NeurIPS2021} for basis pursuit; based on \cite{Kummerle-NeurIPS2021}, we suitably modify their proof strategy, and thus obtain a faster linear rate.

% \noteC{Is this necessary?: (see Section \ref{subsection:GLC-noiseless})}.

\myparagraph{Local Superlinear Convergence (Section \ref{subsection:LSC-noiseless})} We prove in Theorem \ref{theorem:LSC-noiseless} that, under a stable RSP assumption and with $\epsilon^{(t)}$ suitably updated, the $\texttt{IRLS}_p$ Algorithm \ref{algo:IRLS} with $p\in(0,1)$ converges to $\bx^*$ superlinearly, provided that $\bx^{(1)}$ falls into a certain neighborhood of $\bx^*$. To the best of our knowledge, no similar result exists for $\ell_p$-regression \eqref{eq:Lp}. While Theorem \ref{theorem:LSC-noiseless} is inspired by the IRLS algorithm of \cite{Daubechies-CPAM2010} for $\ell_p$-basis pursuit, their IRLS method does not work well for small $p$ (Section \ref{subsection:IRLS-basic}), and unlike in our work, their radius of local convergence diminishes greatly as $p \to 0$ or $m\to \infty$.

\myparagraph{Applications (Section \ref{section:applications})} We illustrate the performance of $\texttt{IRLS}_p$ for \textit{real phase retrieval} \cite{Chen-MP2019,Tan-AA-J-IMA2019}, \textit{linear regression without correspondences} \cite{Unnikrishnan-TIT18,Slawski-JoS19,Tsakiris-TIT2020,Peng-SPL2020}, and \textit{face restoration from sparsely corrupted measurements} \cite{Wright-TPAMI2008,Solomon-2011,Yao-NeurIPS2021}.  For real phase retrieval (Section \ref{subsection:RPR}), we show that $\texttt{IRLS}_{0.1}$ needs only $m=2n-1$ measurements to recover $\bx^*$ up to sign, with an additional assumption. Theoretically, even brute-force can fail to identify $\pm\bx^*$ for fewer than $2n-1$ measurements \cite{Balan-ACHA2006}. Empirically, many methods, including \textit{Kaczmarz} \cite{Strohmer-JFAA2009,Wei-IP2015,Tan-AA-J-IMA2019}, \textit{PhaseLamp} \cite{Dhifallah-Allerton2017}, \textit{truncated Wirtinger flow} \cite{Chen-NIPS2015}, and \textit{coordinate descent} \cite{Zeng-arXiv2017}, fail with $m=2n-1$ Gaussian measurements (Figure \ref{fig:RPR}). For linear regression without correspondences (Section \ref{subsection:SLR}), we show in Figures \ref{fig:SLR1}-\ref{fig:SLR2} that, $\texttt{IRLS}_{0.1}$ is uniformly faster (20-100x) and more accurate than \textit{PDLP} \cite{Applegate-NeurIPS2021} (merged into \hyperlink{https://github.com/google/or-tools}{Google or-tools}) and the commercial solver \textit{Gurobi} \cite{Gurobi951}, both of which solve \eqref{eq:Lp} with $p=1$ as a linear program, and also than \textit{subgradient descent} implemented by Beck \& Guttmann-Beck \cite{Beck-OMS2019}. For face restoration (Section \ref{subsection:face}), we present both quantitative and qualitative results on the Extended Yale B dataset \cite{Georghiades-PAMI2001}.

\section{Background}\label{section:background}
\subsection{Connection of Robust Regression and Compressed Sensing}\label{subsection:connection-CS}
The $\ell_p$-regression problem \eqref{eq:Lp} has a natural correspondence to the sparse recovery problem
\begin{align}\label{eq:Lp-BP}
	\min_{\br\in\bbR^{m}} \norm{\br}{p} \ \ \ \ \ \ \text{s.t.} \ \ \ \ \ \  \bD \br = \bz
\end{align}
where $\bD$ is a $(m-n) \times m$ matrix, $\bz\in\bbR^{m-n}$ a vector and the objective $\| \br \|_{p}$ penalizes coefficient vectors with too many non-zero coordinates. For $p=1$, \eqref{eq:Lp-BP} is also called \emph{basis pursuit} \cite{Chen-SIAM-Rev2001}. Problems \eqref{eq:Lp} and \eqref{eq:Lp-BP} are known to be related in the following sense (as implicitly stated in \cite{Candes-TIT2005,Chartrand-ICASSP2007}):
\begin{prop}\label{prop:LpRR=LpBP}
	Suppose that the range space of $\bA$ is equal to the nullspace of $\bD$, and that $\bz = -\bD \by$. If $\bx_1$ globally minimizes \eqref{eq:Lp}, then $\br_1:=\bA \bx_1 - \by$ globally minimizes \eqref{eq:Lp-BP}. On the other hand, if $\br_2$ globally minimizes \eqref{eq:Lp-BP}, then there exists some $\bx_2$ with $\br_2=\bA \bx_2-\by$ that globally minimizes \eqref{eq:Lp}.
\end{prop}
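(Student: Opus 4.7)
The plan is to prove both directions by showing that, under the assumption $\mathrm{range}(\bA)=\mathrm{null}(\bD)$ and $\bz=-\bD\by$, the feasible set of \eqref{eq:Lp-BP} coincides with the image of the affine map $\bx \mapsto \bA\bx - \by$, so the two problems have equal optimal values and their minimizers correspond to one another.

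First I would verify a small lemma: a vector $\br \in \bbR^m$ satisfies $\bD\br = \bz$ if and only if $\br = \bA\bx - \by$ for some $\bx \in \bbR^n$. The ``if'' direction is a direct computation: $\bD(\bA\bx - \by) = \bD\bA\bx - \bD\by$, and since $\mathrm{range}(\bA) \subseteq \mathrm{null}(\bD)$ we get $\bD\bA\bx = 0$, so $\bD\br = -\bD\by = \bz$. For the ``only if'' direction, if $\bD\br = \bz = -\bD\by$, then $\bD(\br + \by) = 0$, so $\br + \by \in \mathrm{null}(\bD) = \mathrm{range}(\bA)$, and we may pick any $\bx$ with $\bA\bx = \br + \by$.

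For the first claim of the proposition, let $\bx_1$ be a global minimizer of \eqref{eq:Lp} and set $\br_1 := \bA\bx_1 - \by$. By the lemma, $\br_1$ is feasible for \eqref{eq:Lp-BP}. If some feasible $\br'$ satisfied $\|\br'\|_p < \|\br_1\|_p$, the lemma would produce $\bx'$ with $\bA\bx' - \by = \br'$, giving $\|\bA\bx' - \by\|_p < \|\bA\bx_1 - \by\|_p$, contradicting the optimality of $\bx_1$. For the second claim, let $\br_2$ be a global minimizer of \eqref{eq:Lp-BP}; the lemma yields $\bx_2$ with $\br_2 = \bA\bx_2 - \by$. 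If any $\bx'$ gave $\|\bA\bx' - \by\|_p < \|\bA\bx_2 - \by\|_p$, then $\br' := \bA\bx' - \by$ would be feasible for \eqref{eq:Lp-BP} (again by the lemma) with strictly smaller $\ell_p$-(quasi)norm, contradicting the optimality of $\br_2$.

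There is no real obstacle here; the whole content is the equivalence $\{\br : \bD\br = \bz\} = \{\bA\bx - \by : \bx \in \bbR^n\}$, after which both directions are one-line arguments by contradiction. The only mild subtlety worth noting is that $\bx_2$ in the second claim need not be unique when $\bA$ has a non-trivial null space, but any choice with $\bA\bx_2 = \br_2 + \by$ works, which matches the ``there exists some $\bx_2$'' wording of the statement.
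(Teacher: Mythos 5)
Your proof is correct, and it is exactly the standard argument: the whole content is the identity $\{\br:\bD\br=\bz\}=\{\bA\bx-\by:\bx\in\bbR^n\}$, which your lemma establishes from $\mathrm{range}(\bA)=\mathrm{null}(\bD)$ and $\bz=-\bD\by$, after which the correspondence of global minimizers in both directions is immediate. The paper itself omits a proof of this proposition (deferring to results implicit in the cited compressed-sensing literature), so your write-up is precisely the intended argument, with the non-uniqueness of $\bx_2$ correctly handled by the ``there exists'' phrasing.
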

Proposition \ref{prop:LpRR=LpBP} sheds light on how we ``transfer'', with new insights, from the analysis of \cite{Daubechies-CPAM2010,Kummerle-NeurIPS2021} for \eqref{eq:Lp-BP} to results for \eqref{eq:Lp}. In what follows, we treat \cite{Daubechies-CPAM2010,Kummerle-NeurIPS2021} in the context of robust regression %as robust regression papers, 
and highlight the contribution of our work relative to \cite{Daubechies-CPAM2010,Kummerle-NeurIPS2021} and other existing results, whenever possible.%
%distinction of our works from them or other existing results, whenever possible.

\subsection{Iteratively Reweighted Least-Squares: The Basics and New Insights}\label{subsection:IRLS-basic}
We first discuss two different updating rules for the \textit{smoothing parameter} $\epsilon^{(t)}$ of Algorithm \ref{algo:IRLS}: the \textit{fixed} rule and the \textit{dynamic} rule. We then consider the weight updating strategy \eqref{eq:w-update}. Along the way we use synthetic experiments to illustrate ideas; see Appendix \ref{section:experimental-setup} for the experimental setup. In doing so, we intend to provide a review of the state-of-the-art on the variants of IRLS.%wish this section empowers the reader with a state-of-the-art understanding on IRLS and its variants.

\myparagraph{Fixed Smoothing Parameter} Instead of updating $\epsilon^{(t)}$ at each iteration, most works on IRLS use a fixed and small positive number, e.g., $\epsilon:=\epsilon^{(t)}=0.001$ for each $t$ \cite{Lerman-FoCM2015,Lerman-J-IMA2018,Tsakiris-JMLR2018,Ding-ICML2019,Dong-PRL2019,Ding-CVPR2020,Iwata-ECCV2020,Qu-arXiv2020}. The intuition is to avoid division by the potentially very small residual $|\trsp{\ba_i}\bx^{(t)} - y_i|$. But
this common practice of fixing $\epsilon$ comes with at least three issues.
%with this common wisdom of fixing $\epsilon$ comes at least three issues. 
First, IRLS with a fixed $\epsilon>0$ converges only to an ``$\epsilon$-approximate'' point, not exactly the global minimizer (\cite{Chan-SIAM-J-NA1999,Beck-SIAMOpt2015,Lerman-FoCM2015,Lerman-J-IMA2018}, Figure \ref{fig:L1RR}). Second, even if setting $\epsilon$ small (e.g., $\epsilon=10^{-15}$) could lead to an accurate enough solution for $p=1$ (Figure \ref{fig:L1RR}), it can fail for $p<1$ (Figure \ref{fig:LpRR}). Finally, it makes obtaining a global linear convergence rate guarantee difficult: We are not aware of any theory about a global linear rate of IRLS with fixed $\epsilon$.

\begin{figure}
	\centering
	\subfloat[$p=1$]{\input{./figures/experimentL1RR.tex} \label{fig:L1RR} } 
	\subfloat[$p<1$]{\input{./figures/experimentLpRR.tex} \label{fig:LpRR} }
	\subfloat[Sensitivity to $\alpha$ ($k=200$)]{% Created by tikzDevice version 0.12.3.1 on 2022-04-22 08:04:52
% !TEX encoding = UTF-8 Unicode
\begin{tikzpicture}[x=1pt,y=1pt]
\definecolor{fillColor}{RGB}{255,255,255}
\path[use as bounding box,fill=fillColor,fill opacity=0.00] (0,0) rectangle (126.47,112.02);
\begin{scope}
\path[clip] (  0.00,  0.00) rectangle (126.47,112.02);
\definecolor{drawColor}{RGB}{255,255,255}
\definecolor{fillColor}{RGB}{255,255,255}

\path[draw=drawColor,line width= 0.6pt,line join=round,line cap=round,fill=fillColor] ( -0.00,  0.00) rectangle (126.47,112.02);
\end{scope}
\begin{scope}
\path[clip] ( 35.24, 31.17) rectangle (120.97,106.52);
\definecolor{fillColor}{gray}{0.92}

\path[fill=fillColor] ( 35.24, 31.17) rectangle (120.97,106.52);
\definecolor{drawColor}{RGB}{0,0,0}

\path[draw=drawColor,line width= 0.6pt,dash pattern=on 4pt off 4pt ,line join=round] ( 39.90,103.09) --
	( 49.07, 35.22) --
	( 58.24, 35.25) --
	( 67.41, 35.10) --
	( 76.58, 34.97) --
	( 85.75, 35.57) --
	( 94.92, 35.31) --
	(104.09, 35.26) --
	(113.25, 35.30);

\path[draw=drawColor,line width= 0.6pt,dash pattern=on 4pt off 4pt ,line join=round] ( 41.43, 99.35) --
	( 50.60, 35.02) --
	( 59.77, 34.78) --
	( 68.94, 34.60) --
	( 78.10, 35.13) --
	( 87.27, 35.17) --
	( 96.44, 35.31) --
	(105.61, 35.31) --
	(114.78, 35.13);

\path[draw=drawColor,line width= 0.6pt,dash pattern=on 4pt off 4pt ,line join=round] ( 42.95, 97.12) --
	( 52.12, 34.78) --
	( 61.29, 35.03) --
	( 70.46, 34.82) --
	( 79.63, 35.27) --
	( 88.80, 35.19) --
	( 97.97, 35.46) --
	(107.14, 35.31) --
	(116.31, 35.19);
\definecolor{fillColor}{RGB}{255,0,0}

\path[draw=drawColor,line width= 0.4pt,line join=round,line cap=round,fill=fillColor] ( 38.35,101.54) rectangle ( 41.45,104.64);

\path[draw=drawColor,line width= 0.4pt,line join=round,line cap=round,fill=fillColor] ( 47.52, 33.67) rectangle ( 50.62, 36.77);

\path[draw=drawColor,line width= 0.4pt,line join=round,line cap=round,fill=fillColor] ( 56.69, 33.70) rectangle ( 59.79, 36.80);

\path[draw=drawColor,line width= 0.4pt,line join=round,line cap=round,fill=fillColor] ( 65.86, 33.55) rectangle ( 68.96, 36.65);

\path[draw=drawColor,line width= 0.4pt,line join=round,line cap=round,fill=fillColor] ( 75.03, 33.42) rectangle ( 78.13, 36.52);

\path[draw=drawColor,line width= 0.4pt,line join=round,line cap=round,fill=fillColor] ( 84.20, 34.02) rectangle ( 87.30, 37.12);

\path[draw=drawColor,line width= 0.4pt,line join=round,line cap=round,fill=fillColor] ( 93.37, 33.76) rectangle ( 96.46, 36.86);

\path[draw=drawColor,line width= 0.4pt,line join=round,line cap=round,fill=fillColor] (102.54, 33.72) rectangle (105.63, 36.81);

\path[draw=drawColor,line width= 0.4pt,line join=round,line cap=round,fill=fillColor] (111.71, 33.75) rectangle (114.80, 36.85);

\path[draw=drawColor,line width= 0.4pt,line join=round,line cap=round,fill=fillColor] ( 41.43, 99.35) circle (  1.75);

\path[draw=drawColor,line width= 0.4pt,line join=round,line cap=round,fill=fillColor] ( 50.60, 35.02) circle (  1.75);

\path[draw=drawColor,line width= 0.4pt,line join=round,line cap=round,fill=fillColor] ( 59.77, 34.78) circle (  1.75);

\path[draw=drawColor,line width= 0.4pt,line join=round,line cap=round,fill=fillColor] ( 68.94, 34.60) circle (  1.75);

\path[draw=drawColor,line width= 0.4pt,line join=round,line cap=round,fill=fillColor] ( 78.10, 35.13) circle (  1.75);

\path[draw=drawColor,line width= 0.4pt,line join=round,line cap=round,fill=fillColor] ( 87.27, 35.17) circle (  1.75);

\path[draw=drawColor,line width= 0.4pt,line join=round,line cap=round,fill=fillColor] ( 96.44, 35.31) circle (  1.75);

\path[draw=drawColor,line width= 0.4pt,line join=round,line cap=round,fill=fillColor] (105.61, 35.31) circle (  1.75);

\path[draw=drawColor,line width= 0.4pt,line join=round,line cap=round,fill=fillColor] (114.78, 35.13) circle (  1.75);

\path[draw=drawColor,line width= 0.4pt,line join=round,line cap=round,fill=fillColor] ( 42.95, 94.93) --
	( 45.15, 97.12) --
	( 42.95, 99.31) --
	( 40.76, 97.12) --
	cycle;

\path[draw=drawColor,line width= 0.4pt,line join=round,line cap=round,fill=fillColor] ( 52.12, 32.59) --
	( 54.31, 34.78) --
	( 52.12, 36.97) --
	( 49.93, 34.78) --
	cycle;

\path[draw=drawColor,line width= 0.4pt,line join=round,line cap=round,fill=fillColor] ( 61.29, 32.84) --
	( 63.48, 35.03) --
	( 61.29, 37.22) --
	( 59.10, 35.03) --
	cycle;

\path[draw=drawColor,line width= 0.4pt,line join=round,line cap=round,fill=fillColor] ( 70.46, 32.63) --
	( 72.65, 34.82) --
	( 70.46, 37.01) --
	( 68.27, 34.82) --
	cycle;

\path[draw=drawColor,line width= 0.4pt,line join=round,line cap=round,fill=fillColor] ( 79.63, 33.08) --
	( 81.82, 35.27) --
	( 79.63, 37.46) --
	( 77.44, 35.27) --
	cycle;

\path[draw=drawColor,line width= 0.4pt,line join=round,line cap=round,fill=fillColor] ( 88.80, 33.00) --
	( 90.99, 35.19) --
	( 88.80, 37.38) --
	( 86.61, 35.19) --
	cycle;

\path[draw=drawColor,line width= 0.4pt,line join=round,line cap=round,fill=fillColor] ( 97.97, 33.27) --
	(100.16, 35.46) --
	( 97.97, 37.65) --
	( 95.78, 35.46) --
	cycle;

\path[draw=drawColor,line width= 0.4pt,line join=round,line cap=round,fill=fillColor] (107.14, 33.12) --
	(109.33, 35.31) --
	(107.14, 37.50) --
	(104.95, 35.31) --
	cycle;

\path[draw=drawColor,line width= 0.4pt,line join=round,line cap=round,fill=fillColor] (116.31, 33.00) --
	(118.50, 35.19) --
	(116.31, 37.38) --
	(114.12, 35.19) --
	cycle;
\end{scope}
\begin{scope}
\path[clip] (  0.00,  0.00) rectangle (126.47,112.02);
\definecolor{drawColor}{gray}{0.10}

\node[text=drawColor,anchor=base east,inner sep=0pt, outer sep=0pt, scale=  0.73] at ( 30.29,100.57) {$10^{-2}$};

\node[text=drawColor,anchor=base east,inner sep=0pt, outer sep=0pt, scale=  0.73] at ( 30.29, 90.45) {$10^{-4}$};

\node[text=drawColor,anchor=base east,inner sep=0pt, outer sep=0pt, scale=  0.73] at ( 30.29, 29.72) {$10^{-16}$};
\end{scope}
\begin{scope}
\path[clip] (  0.00,  0.00) rectangle (126.47,112.02);
\definecolor{drawColor}{gray}{0.20}

\path[draw=drawColor,line width= 0.6pt,line join=round] ( 32.49,103.60) --
	( 35.24,103.60);

\path[draw=drawColor,line width= 0.6pt,line join=round] ( 32.49, 93.48) --
	( 35.24, 93.48);

\path[draw=drawColor,line width= 0.6pt,line join=round] ( 32.49, 32.75) --
	( 35.24, 32.75);
\end{scope}
\begin{scope}
\path[clip] (  0.00,  0.00) rectangle (126.47,112.02);
\definecolor{drawColor}{gray}{0.20}

\path[draw=drawColor,line width= 0.6pt,line join=round] ( 41.43, 28.42) --
	( 41.43, 31.17);

\path[draw=drawColor,line width= 0.6pt,line join=round] ( 59.77, 28.42) --
	( 59.77, 31.17);

\path[draw=drawColor,line width= 0.6pt,line join=round] ( 78.10, 28.42) --
	( 78.10, 31.17);

\path[draw=drawColor,line width= 0.6pt,line join=round] ( 96.44, 28.42) --
	( 96.44, 31.17);

\path[draw=drawColor,line width= 0.6pt,line join=round] (114.78, 28.42) --
	(114.78, 31.17);
\end{scope}
\begin{scope}
\path[clip] (  0.00,  0.00) rectangle (126.47,112.02);
\definecolor{drawColor}{gray}{0.10}

\node[text=drawColor,anchor=base,inner sep=0pt, outer sep=0pt, scale=  0.73] at ( 41.43, 20.16) {$100$};

\node[text=drawColor,anchor=base,inner sep=0pt, outer sep=0pt, scale=  0.73] at ( 59.77, 20.16) {$300$};

\node[text=drawColor,anchor=base,inner sep=0pt, outer sep=0pt, scale=  0.73] at ( 78.10, 20.16) {$500$};

\node[text=drawColor,anchor=base,inner sep=0pt, outer sep=0pt, scale=  0.73] at ( 96.44, 20.16) {$700$};

\node[text=drawColor,anchor=base,inner sep=0pt, outer sep=0pt, scale=  0.73] at (114.78, 20.16) {$900$};
\end{scope}
\begin{scope}
\path[clip] (  0.00,  0.00) rectangle (126.47,112.02);
\definecolor{drawColor}{gray}{0.10}

\node[text=drawColor,anchor=base,inner sep=0pt, outer sep=0pt, scale=  0.96] at ( 78.10,  7.75) {$\alpha$};
\end{scope}
\begin{scope}
\path[clip] (  0.00,  0.00) rectangle (126.47,112.02);
\definecolor{drawColor}{RGB}{0,0,0}
\definecolor{fillColor}{RGB}{255,0,0}

\path[draw=drawColor,line width= 0.4pt,line join=round,line cap=round,fill=fillColor] ( 53.38, 79.37) rectangle ( 56.48, 82.46);
\end{scope}
\begin{scope}
\path[clip] (  0.00,  0.00) rectangle (126.47,112.02);
\definecolor{drawColor}{RGB}{0,0,0}
\definecolor{fillColor}{RGB}{255,0,0}

\path[draw=drawColor,line width= 0.4pt,line join=round,line cap=round,fill=fillColor] ( 54.93, 73.63) circle (  1.75);
\end{scope}
\begin{scope}
\path[clip] (  0.00,  0.00) rectangle (126.47,112.02);
\definecolor{drawColor}{RGB}{0,0,0}
\definecolor{fillColor}{RGB}{255,0,0}

\path[draw=drawColor,line width= 0.4pt,line join=round,line cap=round,fill=fillColor] ( 54.93, 64.15) --
	( 57.12, 66.34) --
	( 54.93, 68.53) --
	( 52.74, 66.34) --
	cycle;
\end{scope}
\begin{scope}
\path[clip] (  0.00,  0.00) rectangle (126.47,112.02);
\definecolor{drawColor}{RGB}{0,0,0}

\node[text=drawColor,anchor=base west,inner sep=0pt, outer sep=0pt, scale=  0.69] at ( 59.38, 78.07) {\cite{Kummerle-NeurIPS2021} ($\ell_{1}$, \ref{eq:update-Kummerle})};
\end{scope}
\begin{scope}
\path[clip] (  0.00,  0.00) rectangle (126.47,112.02);
\definecolor{drawColor}{RGB}{0,0,0}

\node[text=drawColor,anchor=base west,inner sep=0pt, outer sep=0pt, scale=  0.69] at ( 59.38, 70.79) {\cite{Kummerle-NeurIPS2021} ($\ell_{0.5}$, \ref{eq:update-Kummerle})};
\end{scope}
\begin{scope}
\path[clip] (  0.00,  0.00) rectangle (126.47,112.02);
\definecolor{drawColor}{RGB}{0,0,0}

\node[text=drawColor,anchor=base west,inner sep=0pt, outer sep=0pt, scale=  0.69] at ( 59.38, 63.50) {\cite{Kummerle-NeurIPS2021} ($\ell_{0.1}$, \ref{eq:update-Kummerle})};
\end{scope}
\end{tikzpicture} \label{fig:sensitivity-alpha} }
	\caption{Figure \ref{fig:L1RR}: The relative error $\| \bx^{(t)}-\bx^*\|_2 / \|\bx^*\|_2$ at each iteration $t$ of different IRLS variants for $\ell_1$-regression ($k=200$). Figure \ref{fig:LpRR}: The relative error of $\texttt{IRLS}_p$ Algorithm \ref{algo:IRLS} and that of \cite{Daubechies-CPAM2010} for $\ell_p$-regression ($50$ iterations). Figure \ref{fig:sensitivity-alpha}: The sensitivity of \eqref{eq:update-Daubechies} and \eqref{eq:update-Kummerle} to mis-specification of $\alpha$ ($50$ iterations). In Figure \ref{fig:RR1}, we set $m=1000, n=10$, and results are averaged over $20$ trials. \label{fig:RR1}}
\end{figure}

\myparagraph{Dynamic Smoothing Parameter} 
%Researchers have reached with different insights
Researchers have used different insights to reach the consensus of dynamically updating $\epsilon^{(t)}$ \cite{Daubechies-CPAM2010,Mukhoty-AISTATS2019,Aravkin-arXiv2019,Kummerle-NeurIPS2021,Yang-arXiv2021Lp}. The first insight is that convergence to global minimizers ensues if $\epsilon^{(t)}$ is suitably decreased to $0$ \cite{Daubechies-CPAM2010}. With $\beta\in(0,1)$, one such decreasing rule is
\begin{align}\label{eq:exponential-decay}
	\text{$\epsilon^{(t+1)}\gets \beta \epsilon^{(t)}$ if certain conditions are satisfied, or keep $\epsilon^{(t+1)} \gets \epsilon^{(t)}$ otherwise \cite{Mukhoty-AISTATS2019,Yang-arXiv2021Lp}}.
\end{align}
Figure \ref{fig:L1RR} depicts the performance of \cite{Mukhoty-AISTATS2019} with an arbitrary choice $\beta=0.5$ and $\epsilon^{(0)}=1$. Also shown in Figure \ref{fig:L1RR} are the IRLS methods with update rules of \cite{Daubechies-CPAM2010} and \cite{Kummerle-NeurIPS2021}, which we discuss next.

% \rene{Figure \ref{fig:L1RR} also compares with \cite{Daubechies-CPAM2010} and \cite{Kummerle-NeurIPS2021}, which have not been introduced yet. The distinction between $\alpha$ and $k$ is not clear. It appears from the discussion that they are the same. but then Figures \ref{fig:LpRR} and \ref{fig:sensitivity-alpha} are different. \noteL{\LP{ Distinction between $\alpha$ and $k$: I am thinking about this. Shall we just replace $\alpha$ by $k$? }}  }

% \cite{Daubechies-CPAM2010} and \cite{Kummerle-NeurIPS2021} are different: $[\br^{(t+1)}]_{\alpha+1}$ is the $(\alpha+1)$-th largest element, while $\sigma^{(t+1)}$ is the best $\alpha$-term approximation that sums up the smallest $(m-\alpha)$ elements. So $\sigma^{(t+1)}$ is in general larger than $[\br^{(t+1)}]_{\alpha+1}$.
% The hyper-parameter $\beta$ of the \textit{exponential decay} rule \eqref{eq:exponential-decay} somehow indicates a linear convergence rate in an intuitive (but not rigorous) way\footnote{E.g., \cite{Yang-arXiv2021Lp} proves a local sub-linear rate of the reweighted $\ell_1$ minimization approach for $\ell_p$ ball projection.}, but it seems hard to set $\beta$ optimally (except using grid search), as it has little to none connection to the data or problem of interest. 

The basis pursuit paper \cite{Daubechies-CPAM2010} also proposed a dynamic updating rule for $\epsilon^{(t)}$. In our context \eqref{eq:Lp}, it sets
\begin{align}\label{eq:update-Daubechies}
		\epsilon^{(0)}\gets\infty, \ \ \ \epsilon^{(t+1)}\gets \min\big\{ \epsilon^{(t)}, [\br^{(t+1)}]_{\alpha+1}/m \big\},\ \ \ \ \ \ \ \br^{(t+1)}:= \bA \bx^{(t+1)}- \by\in\bbR^m,
\end{align}
where the hyper-parameter\footnote{In \cite{Daubechies-CPAM2010} and \cite{Kummerle-NeurIPS2021}, $\alpha$ is the sparsity level $k$, but $k$ might be unknown in practice, so we treat it as a hyper-parameter. That being said, in the experiments we set $\alpha=k$ by default, for simplicity and in light of Figure \ref{fig:sensitivity-alpha}.  \label{footnote:alpha} } $\alpha$ is a non-negative integer, and $[\br^{(t+1)}]_{\alpha+1}$ is the  $(\alpha+1)$-th largest element of the residual $\br^{(t+1)}$ in absolute values. Clearly, \eqref{eq:update-Daubechies} creates a non-increasing sequence of smoothing parameters $\epsilon^{(t)}$, and the decay rate of $\epsilon^{(t)}$ is adaptive to the data (in this case to the residual $\br^{(t+1)}$). 
While at first glance it is not clear how fast $\epsilon^{(t)}$ decays, \cite{Daubechies-CPAM2010} showed that the decay rate of $\epsilon^{(t)}$ in \eqref{eq:update-Daubechies} is locally linear for $p=1,\alpha=k$ under mild conditions, in accordance with the decay of the objective values. % The issue is that the term $\frac{[\br^{(t+1)}]_{\alpha+1}}{m}$ can be so small that the method of \cite{Daubechies-CPAM2010} fails in the non-convex case $p<1$ (Figure \ref{fig:LpRR}), similarly to the observation that IRLS with $\epsilon^{(t)}$ fixed failed. 

The final dynamic update rule for the parameter $\epsilon^{(t)}$ is from \cite{Kummerle-NeurIPS2021}\footref{footnote:alpha} and improves upon \eqref{eq:update-Daubechies} \cite{Daubechies-CPAM2010} via:
%To improve \eqref{eq:update-Daubechies} \cite{Daubechies-CPAM2010}, the basis pursuit paper \cite{Kummerle-NeurIPS2021} updates in our context \eqref{eq:Lp} the parameter $\epsilon^{(t)}$ via:
% The final dynamic update rule is from \cite{Kummerle-NeurIPS2021},\footref{footnote:alpha} which builds upon and actually improves \eqref{eq:update-Daubechies} \cite{Daubechies-CPAM2010}:
\begin{equation}\label{eq:update-Kummerle}
	\begin{split}
		\epsilon^{(0)} \gets\infty,\ \ \ \  \ \ \ \epsilon^{(t+1)}\gets \min\big\{ \epsilon^{(t)}, \sigma^{(t+1)}/m \big\} \ \ \ \ \ \ \ \ \ \ \ \ \ \ \\
		\text{where} \ \ \sigma^{(t+1)} \gets \min\Big\{ \norm{\br^{(t+1)} - \bz}{1}: \textnormal{$\bz\in\bbR^m$ is $\alpha$-sparse} \Big\}
	\end{split}
\end{equation}
Observe that \eqref{eq:update-Kummerle} computes the $\ell_1$-norm $\sigma^{(t+1)}$ of the best $\alpha$-term approximation of the residual $\br^{(t+1)}$, which is in general larger than  $[\br^{(t+1)}]_{\alpha+1}$ of \eqref{eq:update-Daubechies}, while both update rules \eqref{eq:update-Daubechies} and \eqref{eq:update-Kummerle} rely on the hyper-parameter $\alpha$. Ideally, if $\alpha=k$, and if IRLS with \eqref{eq:update-Daubechies} or \eqref{eq:update-Kummerle} converges to $\bx^*$, then both $[\br^{(t+1)}]_{\alpha+1}$ and $\sigma^{(t+1)}$ will approach $0$ for large enough $t$. While update rules \eqref{eq:update-Daubechies} and \eqref{eq:update-Kummerle} perform similarly for $\ell_1$-regression (Figure \ref{fig:L1RR}), \eqref{eq:update-Daubechies} fails more easily for small $p$ than \eqref{eq:update-Kummerle} (Figure \ref{fig:LpRR}). Finally, we note that overestimating the sparsity level $k$ by setting $\alpha > k$ deteriorates the performance of \eqref{eq:update-Kummerle} only slightly (Figure \ref{fig:sensitivity-alpha}).

\myparagraph{Summary} In Section \ref{subsection:IRLS-basic} we delivered two take-away messages: (1) IRLS with a fixed $\epsilon^{(t)}$ finds some approximate solution, sometimes good enough, (2) dynamically updating $\epsilon^{(t)}$ as per \eqref{eq:exponential-decay}-\eqref{eq:update-Kummerle} leads to local \cite{Daubechies-CPAM2010} or global \cite{Mukhoty-AISTATS2019,Kummerle-NeurIPS2021} linear convergence guarantees. In view of Figure \ref{fig:RR1}, we will next consider the $\texttt{IRLS}_p$ Algorithm \ref{algo:IRLS} with update rule \eqref{eq:update-Kummerle} for $\epsilon^{(t)}$, and analyze its convergence rates. 

\section{Convergence Theory of IRLS for Robust Regression}\label{section:IRLS-theory}
In Section \ref{subsection:RSP} we introduce the stable range space property and justify it as our core assumption. Under this assumption, we prove the global linear convergence (Theorem \ref{theorem:GLC-L1}) and local superlinear convergence (Theorem \ref{theorem:LSC-noiseless}) of the $\texttt{IRLS}_p$ Algorithm \ref{algo:IRLS} in Sections \ref{subsection:GLC-noiseless} and \ref{subsection:LSC-noiseless}, respectively. 

\subsection{The Stable Range Space Property}\label{subsection:RSP}
\begin{definition}[Stable Range Space Property]\label{def:RSP} A matrix $\bA\in\bbR^{m\times n}$ is said to satisfy the \emph{range space property (RSP)} of order $k$, or the $k$-RSP for short, if the following holds for any vector $\bd$ in the range space of $\bA$ and any set $S\subset\{1,\dots,m\}$ of cardinality at most $k$:
\begin{align}\label{eq:RSP}
	\sum_{i\in S} |d_i| <\sum_{i\in S^c} |d_i|
\end{align}
The \emph{stable RSP} of $\bA$ is defined in the same way as the RSP except that we now require
\begin{align}\label{eq:stable-RSP}
	\sum_{i\in S} |d_i| \leq \eta \sum_{i\in S^c} |d_i|
\end{align}
for some $\eta\in(0,1)$. We write ``$(k,\eta)$-stable RSP'' to emphasize the parameters $k$ and $\eta$.
\end{definition}
By definition, the stable RSP implies the RSP. Note that the $(k,\eta)$-stable RSP was proposed in \cite{Zhao-SIAM-J-O2012} to analyze a reweighted $\ell_1$-minimization algorithm for compressed sensing. With the notation of Proposition \ref{prop:LpRR=LpBP}, we can see that checking \eqref{eq:RSP} for all $\bd$ in the range space of $\bA$ is equivalent to checking it for all $\bd$ in the nullspace of $\bD$, the latter being the well-known \textit{nullspace property} (NSP) \cite{Cohen-JAMS2009,Foucart-2013}. In other words, the (stable) RSP of $\bA$ is equivalent to the (stable) NSP of $\bD$.

In Sections \ref{subsection:GLC-noiseless} and \ref{subsection:LSC-noiseless}, we will use the $(k,\eta)$-stable RSP as an assumption for analysis. Arguably, this is a weak assumption to make as the (stable) RSP is very close to a sufficient and necessary condition for exact recovery of the coefficient vector via $\ell_p$-robust regression \eqref{eq:Lp}:
\begin{prop}[Exact Recovery $\Leftrightarrow$ RSP]\label{prop:RSP=exact-recovery}
Let $p\in(0,1]$. For all $\bx^*$ and $\by$ such that $\bA \bx^*-\by$ is $k$-sparse $\bx^*$ is the unique solution to \eqref{eq:Lp} if and only if $\bA\in\bbR^{m\times n}$ satisfies the $k$-RSP.
\end{prop}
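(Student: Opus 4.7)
My plan is to prove the two directions separately, using nothing beyond the $p$-subadditivity inequality $|a+b|^p \le |a|^p + |b|^p$ (valid for $p \in (0,1]$). One could alternatively invoke Proposition~\ref{prop:LpRR=LpBP} to translate the claim into the $\ell_p$ null-space-property characterization of exact recovery from compressed sensing, but the direct argument is short and self-contained and transparently exposes where the RSP threshold enters.

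\textbf{Sufficiency (RSP $\Rightarrow$ uniqueness).} I would fix $\bx^*$ and $\by$ with $\br^* := \bA\bx^* - \by$ supported on some $S$ of size at most $k$, pick an arbitrary competitor $\bx$, and set $\bd := \bA(\bx - \bx^*)$, which lies in the range of $\bA$. Splitting the objective into the $S$ and $S^c$ indices, using $\br^*_i = 0$ on $S^c$, and applying the reverse form $|\br^*_i + d_i|^p \ge |\br^*_i|^p - |d_i|^p$ of $p$-subadditivity on $S$, I expect to obtain
\begin{align*}
\norm{\bA\bx - \by}{p}^p \;\ge\; \norm{\br^*}{p}^p + \Big(\sum_{i \in S^c}|d_i|^p - \sum_{i \in S}|d_i|^p\Big).
\end{align*}
Whenever $\bd \neq \bm{0}$, the $(p,k)$-RSP renders the parenthesized quantity strictly positive, giving strict optimality of $\bx^*$. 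The edge case $\bd = \bm{0}$, in which I need $\bx = \bx^*$, is handled by noting that the uniqueness hypothesis applied to the special choice $\by = \bA\bx^*$ (making the residual exactly zero, hence trivially $k$-sparse) forces $\ker(\bA) = \{\bm{0}\}$, so the RSP is implicitly quantified over nonzero range-space vectors and this case cannot arise with $\bx \ne \bx^*$.

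\textbf{Necessity (uniqueness $\Rightarrow$ RSP).} For any nonzero $\bd = \bA\bv$ in the range of $\bA$ and any $|S| \le k$, I would build the specific instance $\bx^* := \bm{0}$, $y_i := d_i$ for $i \in S$, $y_i := 0$ for $i \in S^c$, so that $\bA\bx^* - \by = -\by$ is $k$-sparse. The key observation is that the alternative point $\bx' := \bv \neq \bm{0}$ makes $\bA\bv - \by$ vanish on $S$ (by design) and equal $d_i$ on $S^c$. The uniqueness inequality $\norm{\bA\bv - \by}{p}^p > \norm{-\by}{p}^p$ therefore collapses to $\sum_{i \in S^c}|d_i|^p > \sum_{i \in S}|d_i|^p$, which is exactly \eqref{eq:RSP}. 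The only mildly delicate point (the ``hard part,'' such as it is) will be the bookkeeping around $\bd = \bm{0}$ and the reconciliation of the RSP convention with the trivial-kernel condition on $\bA$; both are dispatched by the single worst-case construction above together with the $\by = \bA\bx^*$ reduction from the sufficiency step, so I anticipate no substantive obstacle.
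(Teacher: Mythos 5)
Your route is genuinely different from the paper's: the paper does not prove Proposition~\ref{prop:RSP=exact-recovery} directly but imports it from \cite[Theorem 4.9]{Foucart-2013} (the $\ell_p$ null space property characterization of exact recovery), implicitly via the regression/basis-pursuit correspondence of Proposition~\ref{prop:LpRR=LpBP}. Your direct argument is the standard NSP proof transplanted to the range space, and the two main computations are correct: the $p$-subadditivity chain in the sufficiency direction, and the worst-case instance $\bx^*=\bm{0}$, $y_i=d_i$ on $S$, $y_i=0$ on $S^c$ in the necessity direction, which indeed collapses the uniqueness inequality to \eqref{eq:RSP}. As a self-contained argument it is a reasonable substitute for the citation.

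There is, however, a genuine gap in how you close the sufficiency direction. Your inequality shows $\norm{\bA\bx-\by}{p}^p>\norm{\bA\bx^*-\by}{p}^p$ whenever $\bd=\bA(\bx-\bx^*)\neq\bm{0}$, i.e.\ it proves uniqueness of the optimal \emph{residual}. For the remaining case $\bd=\bm{0}$, $\bx\neq\bx^*$ you appeal to ``the uniqueness hypothesis'' to force $\ker(\bA)=\{\bm{0}\}$ — but in this direction uniqueness is the conclusion, not an available assumption, so the argument is circular. Nor is the case vacuous: the RSP constrains only the range space of $\bA$ and says nothing about its kernel, so one can have $\rank(\bA)<n$ (e.g.\ duplicate a column of a matrix whose column space satisfies \eqref{eq:RSP}) while the RSP still holds; then every $\bx^*+\bv$ with $\bv\in\ker(\bA)$, $\bv\neq\bm{0}$, attains the same objective value and $\bx^*$ is not the unique minimizer in $\bbR^n$. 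The fix is to add (or make explicit) the assumption that $\bA$ has full column rank — harmless in the paper's regime $m\gg n$ and exactly what the passage from the unique sparse residual back to a unique coefficient vector requires in the cited Foucart--Rauhut statement as well — or to phrase uniqueness at the level of residuals. Your necessity direction is unaffected and correct as written.
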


It follows from \cite[Theorem 5]{Tillmann-TIT2013} and Proposition \ref{prop:LpRR=LpBP} that checking whether a given matrix $\bA$ satisfies the stable RSP is \textit{co-NP-complete} \cite{Arora-2009}. On the other hand, we show that random Gaussian matrices of size $m\times n$ with sufficiently large $m$ satisfy the stable RSP with high probability, which further justifies the usage of the stable RSP as an assumption in our analysis of $\texttt{IRLS}_p$:
\begin{prop}[Gaussian $\Rightarrow$ RSP]\label{prop:Gaussian-RSP}
	Suppose $m-n \geq 2k$. Let $\bA\in\bbR^{m\times n}$ be a matrix with i.i.d. $\cN(0,1)$ entries. Let $\delta\in (0,1)$ and $\eta\in(0,1]$ be fixed constants. If it holds that
	\begin{align}\label{eq:condition-Gaussian-RSP}
		\frac{(m-n)^2}{m-n+1} \geq 2k \ln(em/k) \cdot \bigg( 1.67 +   \eta^{-1}  + \frac{\sqrt{18\ln (2.5\delta^{-1})}}{\sqrt{2k \ln(em/k)}}  \bigg)^2,
	\end{align}
	then $\bA$ satisfies the  $(k,\eta)$-stable RSP with probability at least $1-\delta$.
\end{prop}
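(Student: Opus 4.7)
The plan is to reduce the claim to a statement about the null space property of a Gaussian matrix of size $(m-n)\times m$, and then invoke Gordon's escape through the mesh. By orthogonal invariance, the column range of $\bA$ is a uniformly random $n$-dimensional subspace of $\bbR^m$; the same holds for the null space of a random matrix $\bD \in \bbR^{(m-n)\times m}$ with i.i.d.\ $\cN(0,1)$ entries, which is almost surely of full rank $m-n$. Since the (stable) RSP of $\bA$ is equivalent to the (stable) NSP of any $\bD$ with $\ker(\bD) = \operatorname{range}(\bA)$ (as noted after Definition \ref{def:RSP}), the statement reduces to proving the stable NSP for a Gaussian $\bD$ of the appropriate size.

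I would then reduce the ``for any $p \in (0,1]$'' claim to the single case $p=1$. Because $\sum_{i\in S}|d_i|^p$ is maximized, and $\sum_{i\in S^c}|d_i|^p$ is minimized, at $S$ equal to the top-$k$ support of $\bd$ in absolute value, the stable NSP is equivalent to its top-$k$ version. Sort $|d_1|,\ldots,|d_m|$ as $a_1 \geq \cdots \geq a_m$. Since $p-1 \leq 0$ makes $a\mapsto a^{p-1}$ non-increasing, we have $a_i^{p-1} \leq a_k^{p-1}$ for $i \leq k$ and $a_i^{p-1} \geq a_k^{p-1}$ for $i > k$, so
\begin{equation*}
\sum_{i=1}^k a_i^p \leq a_k^{p-1}\sum_{i=1}^k a_i \leq \eta\, a_k^{p-1}\sum_{i=k+1}^m a_i \leq \eta \sum_{i=k+1}^m a_i^p\mper
\end{equation*}
Hence the stable $(1,k,\eta)$-NSP of $\bD$ implies the stable $(p,k,\eta)$-NSP for every $p \in (0,1]$ with the same $\eta$. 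The degenerate case $a_k=0$ corresponds to a $(k-1)$-sparse null-space vector, which almost surely does not exist for a Gaussian $\bD$ with $m-n \geq 2k$.

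The probabilistic core is then the claim that a Gaussian $\bD$ satisfies the stable $(1,k,\eta)$-NSP with probability at least $1-\delta$. Let $T_\eta = \{\bd \in S^{m-1} : \|\bd_S\|_1 > \eta \|\bd_{S^c}\|_1 \text{ for some } |S| \leq k\}$; the NSP holds iff $\ker(\bD) \cap T_\eta = \emptyset$, which is implied by $\inf_{\bd \in T_\eta} \|\bD \bd\|_2 > 0$. Choosing $S$ as the top-$k$ support of $\bd \in T_\eta$ yields $\|\bd\|_1 \leq (1 + \eta^{-1})\|\bd_S\|_1 \leq (1 + \eta^{-1})\sqrt{k}$, so $T_\eta$ sits inside the compressible set $\{\|\bd\|_2 \leq 1,\ \|\bd\|_1 \leq (1+\eta^{-1})\sqrt{k}\}$. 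A standard Gaussian-width estimate for an $\ell_1$--$\ell_2$ intersection of this form yields $w(T_\eta) \leq (1.67 + \eta^{-1})\sqrt{2k \log(em/k)}$, where the ``$1.67$'' absorbs the $+1$ in $(1+\eta^{-1})$ together with constants from the bound. Gordon's escape theorem together with Gaussian--Lipschitz concentration then delivers $\bbP\bigl(\inf_{\bd \in T_\eta} \|\bD\bd\|_2 \leq 0\bigr) \leq \delta$ whenever $\gamma_{m-n} \geq w(T_\eta) + \sqrt{18\log(2.5/\delta)}$, where $\gamma_M = \bbE\|g\|_2$ for $g \sim \cN(0,I_M)$. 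The classical inequality $\gamma_M^2 \geq M^2/(M+1)$ then converts this requirement into exactly the hypothesis \eqref{eq:condition-Gaussian-RSP}.

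The main technical obstacle is bookkeeping the exact numerical constants ($1.67$, $2.5$, $18$) appearing in \eqref{eq:condition-Gaussian-RSP}: this demands both a sharp Gaussian-width bound on the compressible set (via a careful peeling or truncation argument rather than the textbook $O(\sqrt{k\log(m/k)})$ estimate) and the precise deviation form of Gordon's comparison. The qualitative scaling $m-n \gtrsim \eta^{-2} k \log(em/k)$, on the other hand, is the standard rate one expects for Gaussian ensembles in compressed sensing.
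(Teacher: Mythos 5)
Your proposal is correct and follows essentially the same route as the paper: reduce the stable $(p,k,\eta)$-RSP to the $p=1$ case via the top-$k$/monotonicity argument, view the relevant subspace as uniformly random on the Grassmannian, and apply Gordon's escape through a mesh with the deviation term $\sqrt{18\ln(2.5\delta^{-1})}$ and the bound $\frac{m-n}{\sqrt{m-n+1}}$ on the Gaussian mean width of the sphere. The only differences are cosmetic: the paper applies the escape theorem directly to $\operatorname{range}(\bA)$ rather than detouring through an auxiliary Gaussian $\bD$, and the Gaussian-width bound $w(\cT_k\cap\bbS^{m-1})\leq \sqrt{2k\ln(em/k)}\,(1.67+\eta^{-1})$ that you flag as the ``main technical obstacle'' is simply imported from \cite[Remark 9.30 and Proposition 9.33]{Foucart-2013}, so no bespoke peeling argument is needed.
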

The assumption $m-n \geq 2k$ of Proposition \ref{prop:Gaussian-RSP} is necessary, as it is not hard to prove that, if $m-n < 2k$ and $\bA\bx^*-\by$ is $k$-sparse, then the $\ell_0$-minimization problem $\min_{x\in\bbR^n} \|\bA \bx -\by \|_0$ has multiple solutions. With this assumption, \eqref{eq:condition-Gaussian-RSP} roughly becomes $m-n\geq ck \log(em/k)$ for some constant $c$ if $m-n$ is large enough; thus, ignoring logarithmic and constant factors, condition \eqref{eq:condition-Gaussian-RSP} becomes $m-n\geq \Theta(k)$, which is nearly optimal, as the condition $m-n \geq 2k$ is necessary. 

Proposition \ref{prop:RSP=exact-recovery} follows directly from \cite[Theorem 4.9]{Foucart-2013}, so we omit its proof. The reader might also find Proposition \ref{prop:Gaussian-RSP} corresponds to \cite[Corollary 9.34]{Foucart-2013}. This corollary assumes $\bD$ of \eqref{eq:Lp-BP} has i.i.d. $\cN(0,1)$ entries, from which is not immediate what the distribution of $\bA$ is, thus it does not imply Proposition \ref{prop:Gaussian-RSP} directly; this is why we provide a complete proof for Proposition \ref{prop:Gaussian-RSP} in Appendix \ref{appendix:RSP-Gaussian}.

\subsection{Global Linear Convergence of $\texttt{IRLS}_1$}\label{subsection:GLC-noiseless}
With the background provided in Section \ref{section:background}, we are now ready to state the first main result:
\begin{theorem}[Global Linear Convergence]\label{theorem:GLC-L1} Suppose $\bA\in\bbR^{m\times n}$ obeys the $(k,\eta)$-stable RSP with $\eta\in(0,3/4)$ (cf. Definition \ref{def:RSP}). Let $\bA \bx^* - \by$ be $k$-sparse. If the smoothing parameter $\epsilon^{(t)}$ is updated as per \eqref{eq:update-Kummerle} with $\alpha=k$ during the execution of the $\texttt{IRLS}_1$ Algorithm \ref{algo:IRLS}, then it holds for any $t\geq 1$ that
	\begin{align}\label{eq:linear-convergence}
	\norm{\bA\bx^{(t+1)} - \by}{1} - \norm{\bA\bx^{*}-\by}{1} \leq \bigg(1- \frac{(3-4\eta)^2}{294\eta m} \bigg)^t \cdot 3\cdot\norm{\bA \bx^{(1)}-\by}{1}, 
	\end{align}
	meaning that the $\texttt{IRLS}_1$ Algorithm \ref{algo:IRLS} converges linearly and globally in objective value.
\end{theorem}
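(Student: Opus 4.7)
The plan is to adapt the potential-function argument of \cite{Kummerle-NeurIPS2021} from basis pursuit to the robust regression setting, exploiting the correspondence of Proposition \ref{prop:LpRR=LpBP}. I work with the Huber-smoothed objective
\begin{align*}
F_\epsilon(\bx) := \sum_{i=1}^{m} \phi_\epsilon(r_i(\bx)), \qquad r_i(\bx) := \trsp{\ba_i}\bx - y_i,
\end{align*}
where $\phi_\epsilon(t) = |t|$ for $|t|\geq \epsilon$ and $\phi_\epsilon(t) = (t^2 + \epsilon^2)/(2\epsilon)$ otherwise. For $p=1$ the weights in \eqref{eq:w-update} are $w_i^{(t)} = 1/\max\{|r_i(\bx^{(t)})|,\epsilon^{(t)}\}$, and a standard majorization-minimization calculation shows that $Q_t(\bx) := \tfrac{1}{2}\sum_i w_i^{(t)} r_i(\bx)^2 + c_t$ majorizes $F_{\epsilon^{(t)}}$ with equality at $\bx^{(t)}$. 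Writing $\bW^{(t)} := \mathrm{diag}(\bw^{(t)})$ and $\|\bv\|_{\bW^{(t)}}^2 := \trsp{\bv}\bW^{(t)}\bv$, the step \eqref{eq:x-update} thus yields the descent inequality
\begin{align*}
F_{\epsilon^{(t)}}(\bx^{(t+1)}) + \tfrac{1}{2}\|\bA(\bx^{(t+1)}-\bx^{(t)})\|_{\bW^{(t)}}^2 \leq F_{\epsilon^{(t)}}(\bx^{(t)}),
\end{align*}
while the elementary sandwich $\|\bA\bx-\by\|_1 \leq F_\epsilon(\bx) \leq \|\bA\bx-\by\|_1 + m\epsilon/2$ ties the smoothed and true objectives together.

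Next I invoke the $(1,k,\eta)$-stable RSP on the range-space vector $\bd^{(t+1)} := \bA(\bx^{(t+1)}-\bx^*)$. Letting $S := \mathrm{supp}(\bA\bx^*-\by)$ with $|S|\leq k$, the reverse triangle inequality combined with \eqref{eq:stable-RSP} gives
\begin{align*}
\|\bA\bx^{(t+1)}-\by\|_1 - \|\bA\bx^*-\by\|_1 \geq \sum_{i\in S^c}|d^{(t+1)}_i| - \sum_{i\in S}|d^{(t+1)}_i| \geq (1-\eta)\sum_{i\in S^c}|d^{(t+1)}_i|,
\end{align*}
and a second application of the stable RSP upgrades this to a two-sided equivalence between $\|\bd^{(t+1)}\|_1$ and the suboptimality gap, with constants depending only on $\eta$. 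In parallel, the update rule \eqref{eq:update-Kummerle} with $\alpha=k$ forces $m\epsilon^{(t+1)} \leq \sigma^{(t+1)} \leq \|\bd^{(t+1)}\|_1$, since the $k$-sparse vector $\bA\bx^*-\by$ is an admissible competitor in the definition of $\sigma^{(t+1)}$. Combined with the stable-RSP equivalence this ensures that $m\epsilon^{(t)}$ remains proportional to the true $\ell_1$ gap throughout, so the sandwich never loses a constant factor.

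The heart of the proof is converting the MM surplus into a quadratic lower bound on the smoothed suboptimality. Following \cite{Kummerle-NeurIPS2021}, the pointwise estimate $\phi_\epsilon(t)-\phi_\epsilon(s) \leq \phi_\epsilon'(s)(t-s) + (t-s)^2/(2\max\{|s|,\epsilon\})$, summed over $i$ and combined with the normal equations for the weighted least-squares step, yields the Cauchy--Schwarz bound
\begin{align*}
F_{\epsilon^{(t)}}(\bx^{(t+1)}) - F_{\epsilon^{(t)}}(\bx^*) \leq \|\bA(\bx^*-\bx^{(t+1)})\|_{\bW^{(t)}} \cdot \|\bA(\bx^{(t+1)}-\bx^{(t)})\|_{\bW^{(t)}},
\end{align*}
whose first factor I control via $w_i^{(t)} \leq 1/\epsilon^{(t)}$ and the $\ell_1$ bound on $\bd^{(t+1)}$. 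Substituting back into the descent inequality and using the sandwich plus the $\epsilon$-tracking produces a recursion of the form $\mathrm{gap}^{(t+1)} \leq (1-c(\eta)/m)\,\mathrm{gap}^{(t)}$, from which \eqref{eq:linear-convergence} follows by iteration.

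The hardest part will be balancing these three inequalities (Huber sandwich, stable-RSP equivalence, and the Cauchy--Schwarz step) so as to extract the explicit contraction factor $(3-4\eta)^2/(294\eta m)$. The threshold $\eta<3/4$ will arise precisely to keep the numerator $3-4\eta$ positive after combining the $1\pm\eta$ constants from the stable RSP with factors coming from the Huber smoothing and the $\epsilon$-tracking estimate; the leading constant $3$ in \eqref{eq:linear-convergence} will absorb the base-case conversion from $F_{\epsilon^{(0)}}(\bx^{(1)})$ to the initial $\ell_1$ residual $\|\bA\bx^{(1)}-\by\|_1$ through the sandwich at $t=0$.
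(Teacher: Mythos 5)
Your architecture is genuinely different from the paper's: the paper never forms a descent-surplus/Cauchy--Schwarz recursion, but instead compares the minimizer of the quadratic model $Q_{\epsilon^{(t)}}(\cdot,\br^{(t)})$ against the explicit trial point $\br^{(t)}-s\bd^{(t)}$ on the segment toward $\br^*$, bounds the linear-in-$s$ term via the stable RSP (this is where $\eta<3/4$ and the factor $3-4\eta$ arise), bounds the quadratic term by $\tfrac{49}{16}\eta\big(\sum_{i\in(S^*)^c}|d^{(t)}_i|\big)^2/\epsilon^{(t)}$, and optimizes $s$ to get a per-iteration decrease proportional to $\epsilon^{(t)}$. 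Your route (exact-minimization surplus $\tfrac12\norm{\bA(\bx^{(t+1)}-\bx^{(t)})}{\bW^{(t)}}^2$ plus a convexity/Cauchy--Schwarz error bound) is a legitimate alternative scheme and can in principle produce a $(1-c(\eta)/m)$ contraction; note only that since $\bW^{(t)}$ is anchored at $\bx^{(t)}$, the natural convexity bound is $F_{\epsilon^{(t)}}(\bx^{(t)})-F_{\epsilon^{(t)}}(\bx^*)\le\norm{\bA(\bx^{(t)}-\bx^{(t+1)})}{\bW^{(t)}}\cdot\norm{\bA(\bx^{(t)}-\bx^*)}{\bW^{(t)}}$, i.e.\ with the factor at $\bx^{(t)}$ rather than $\bx^{(t+1)}$ (fixable, since $F_{\epsilon^{(t)}}(\bx^{(t+1)})\le F_{\epsilon^{(t)}}(\bx^{(t)})$).

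The genuine gap is your treatment of $\epsilon^{(t)}$. To turn $w^{(t)}_i\le 1/\epsilon^{(t)}$ into $\norm{\bA(\bx^{(t)}-\bx^*)}{\bW^{(t)}}^2\lesssim m\cdot\mathrm{gap}$ (and likewise to keep the sandwich slack $m\epsilon^{(t)}/2$ harmless), you need a \emph{lower} bound on $\epsilon^{(t)}$ in terms of the current suboptimality gap; the only control you actually establish is the opposite direction, $m\epsilon^{(t+1)}\le\sigma^{(t+1)}\le\norm{\bd^{(t+1)}}{1}$. The "two-sided equivalence" you cite is between $\norm{\bd^{(t)}}{1}$ and the gap, and it does not deliver the needed statement, because $\sigma^{(t)}$ in \eqref{eq:update-Kummerle} is the best $k$-term approximation error of $\br^{(t)}$ computed on the top-$k$ support of $\br^{(t)}$, not on $S^*$; a priori $\sigma^{(t)}$ could be far smaller than the gap. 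Closing this requires a separate RSP argument on the top-$k$ support of $\br^{(t)}$ --- precisely the paper's Lemma \ref{lemma:1}, which shows $H_{\epsilon^{(t)}}(\br^{(t)})-\norm{\br^*}{1}\le 3\sigma^{(t)}$ --- combined with monotonicity of $t\mapsto H_{\epsilon^{(t)}}(\br^{(t)})$ to transfer the bound back to the iteration $t'\le t$ at which $\epsilon^{(t)}=\sigma^{(t')}/m$ was set. Without this ingredient your recursion does not close. Relatedly, the stated constants $(3-4\eta)^2/(294\eta m)$, the threshold $\eta<3/4$, and the prefactor $3$ are produced by the paper's specific estimates (e.g.\ $|r|\le r^2/\epsilon^{(t)}+\epsilon^{(t)}/4$, $\eta(1+\eta)^2\le\tfrac{49}{16}\eta$, and Lemma \ref{lemma:1}); your sketch defers exactly this balancing, but since the theorem asserts these explicit constants, producing them (or sharper ones) is part of the proof, not an afterthought.
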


\myparagraph{Discussion} The condition $\eta\in(0,3/4)$ is better understood via Proposition \ref{prop:Gaussian-RSP}, which asserts that this condition holds with high probability if $\bA$ has i.i.d. Gaussian entries and if $m$ is large enough (compared to $n,k$). Moreover, if  the $(k,\eta)$-stable RSP holds, Proposition \ref{prop:RSP=exact-recovery} implies that $\bx^*$ is a global minimizer of \eqref{eq:Lp} with $p=1$. The strength of Theorem \ref{theorem:GLC-L1} is in that it guarantees linear convergence for \textit{any initialization, starting at the first iteration}. 
%With this strength comes the price: The 
However, the price to pay is that the
convergence rate seems conservative with constant $1/294$, and depends on the number $m$ of samples: \eqref{eq:linear-convergence} predicts that $\texttt{IRLS}_1$ converges with accuracy $\delta$ in $O(m\log (1/\delta))$ iterations and $m$ can be large. We argue that such dependency is not an artifact of our analysis, because similar dependencies have been established for IRLS variants for $\ell_p$-regression \eqref{eq:Lp} with $p>2$ \cite[Theorem 3.1]{Adil-NeurIPS2019}, for interior point methods in convex optimization with $m$ inequality constraints (\cite[Chapter 11]{Boyd-book2004}, \cite[Chapter 5]{Nesterov-2018}), and for SGD in smooth \& strongly convex optimization with a sum of $m$ function components \cite[Theorem 2.1]{Allen-JMLR2017}). Moreover, an adversarial initialization for which the rate empirically depends on $m$ was pointed out for IRLS for basis pursuit in \cite{Kummerle-NeurIPS2021}. That being said, we conjecture that the linear dependency on $m$ in \eqref{eq:linear-convergence} can be improved to $\sqrt{m}\log m$ for some different choice of $\epsilon^{(t)}$; see \cite{Adil-NeurIPS2019} and \cite[Sections 11.5 and 11.8.2]{Boyd-book2004} for why this conjecture makes sense. Furthermore, Figure \ref{fig:L1RR} empirically depicts that, with the least-squares initialization, i.e., with $w_i^{(0)}=1$ for all $i$, $\texttt{IRLS}_1$ typically converges to $\bx^*$ in $30$ iterations. This hints to the possibility of alleviating the dependency on $m$ by analyzing the least-squares initialization, a challenging task which we leave to future work.

\myparagraph{Comparison to \cite{Mukhoty-AISTATS2019}} To our knowledge, \cite{Mukhoty-AISTATS2019} is the only paper that claimed the global linear convergence of an IRLS variant with $\epsilon^{(t)}$ updated as per \eqref{eq:exponential-decay} and with $p=1$. In particular, \cite{Mukhoty-AISTATS2019} sets $\epsilon^{(t+1)}\gets \beta \epsilon^{(t)}$ whenever $\| \bx^{(t+1)} - \bx^{(t)} \|_2\leq 2\beta \epsilon^{(t)}$; see Figure \ref{fig:L1RR} for the convergence of IRLS under this rule. The update rule $\epsilon^{(t+1)}\gets \beta \epsilon^{(t)}$ of \cite{Mukhoty-AISTATS2019} has the advantage of being faster to compute than \eqref{eq:update-Kummerle}. In spite of this, and of their beautiful proof idea, their result has several disadvantages compared to Theorem \ref{theorem:GLC-L1}. First, their proof involves sophisticated probabilistic arguments, and is tailored towards a feature matrix that satisfies strong concentration properties. Our analysis, on the other hand, uses the $(k,\eta)$-stable RSP, which is close to a necessary and sufficient condition for the success of $\ell_1$-regression (cf. Proposition \ref{prop:RSP=exact-recovery}) and expected to hold for a much larger range of matrices than just (sub-)Gaussian matrices (see \cite{LecueMendelson-EMS17,DirksenLecueRauhut-IEEEIT2017} for related results in the context of basis pursuit).  Second, some statements in their proof are inaccurate (e.g., their Lemma 5 does not hold for $k<n$, the last paragraph in their proving Lemma 8 is not rigorous). Third, their update rule incurs two hyper-parameters, $\epsilon^{(0)}$ and $\beta$, while Theorem \ref{theorem:GLC-L1} is based on the decreasing rule \eqref{eq:update-Kummerle} that is adaptive to data, and the only hyper-parameter $\alpha$ is easier to set (Figure \ref{fig:sensitivity-alpha}).

\myparagraph{Connection to \cite{Kummerle-NeurIPS2021}} % The reader might find a natural correspondence between Theorem 3.2 of \cite{Kummerle-NeurIPS2021} in the context of basis pursuit (\eqref{eq:Lp-BP} with $p=1$) and our Theorem \ref{theorem:GLC-L1}
A related global linear convergence was established for the IRLS method for basis pursuit in \cite[Theorem 3.2]{Kummerle-NeurIPS2021}. Our proof is inspired by \cite{Kummerle-NeurIPS2021}, but also improves on its strategy. For example, their Theorem 3.2 involves two parameters for the \textit{stable nullspace property}, while we only have a single RSP parameter $\eta$, simplifying matters. Also, we obtain a constant of $1/294$ in Theorem \ref{theorem:GLC-L1}, which is better than the value of $1/768$ for the respective constant in \cite[Theorem 3.2]{Kummerle-NeurIPS2021}.%\footnote{In Theorem 3.2 of \cite{Kummerle-NeurIPS2021} the constant was $1/768$; we find that this is a miscalculation in their proof.} % Finally, the result of \cite{Kummerle-NeurIPS2021} does not apply for robust regression \eqref{eq:Lp} with $p\in(0,1)$.

It is important to note that the proofs of \cite{Mukhoty-AISTATS2019,Kummerle-NeurIPS2021} and ours heavily rely on certain rules for decreasing the smoothing parameter $\epsilon^{(t)}$; all these proofs of global linear convergence for IRLS would break down if $\epsilon^{(t)}$ were fixed. While the main theorems of both \cite{Mukhoty-AISTATS2019} and \cite{Kummerle-NeurIPS2021} are limited to the $p=1$ case, we present our Theorem \ref{theorem:LSC-noiseless} for $p\in(0,1)$ next.

\subsection{Local Superlinear Convergence of $\texttt{IRLS}_p$}\label{subsection:LSC-noiseless}
The $\ell_p$-regression problem \eqref{eq:Lp} with $p\in(0,1)$ is more challenging than the case $p=1$ due to the lack of convexity. However, here we show that at least locally,  $\texttt{IRLS}_p$ converges with a superlinear rate of order $2-p$. Moreover, since it is valid to run $\texttt{IRLS}_p$ even with $p=0$, we carefully design the proof such that the following result holds not only for $p\in(0,1]$, but also for $p=0$, in which case $\texttt{IRLS}_p$ can be interpreted as an algorithm minimizing a sum-of-logarithm objective (see Appendix \ref{sec:IRLS0:MM}).
\begin{theorem}[Local Superlinear Convergence]\label{theorem:LSC-noiseless}
	Run $\texttt{IRLS}_p$ with $p\in[0,1]$ and update $\epsilon^{(t)}$ by \eqref{eq:update-Kummerle} with $\alpha=k$. Assume $\bA$ satisfies the $(k,\eta)$-stable RSP (Definition \ref{def:RSP}). Let $c\in(0,1)$ be a sufficiently small constant such that $2 c^{1-p}\eta (\eta+1)<(1-c)^{2-p}$. Let $\bA \bx^*- \by$ be $k$-sparse with support $S^*$. Define
	\begin{align}\label{eq:mu}
		\mu := 2\eta (\eta+1)  (1-c)^{p-2} \cdot \min_{i\in S^*} |\trsp{\ba_i}\bx^* - y_i|^{p-1}.
	\end{align}
	If the initialization $\bx^{(1)}$ is in a neighborhood of $\bx^*$, in the sense that % \textcolor{red}{TODO: modify the proof}
	\begin{align}\label{eq:Lp-convergence-radius}
		\norm{\bA \bx^{(1)} - \bA \bx^* }{1} \leq c\cdot \min_{i\in S^*} |\trsp{\ba_i}\bx^* - y_i|, % \CK{\quad \text{with} \quad \frac{c^{1-p} (m^{p-1}+1)\eta (\eta+1)}{(1-c)^{2-p}} < 1}   %\mu \cdot \norm{\bA \bx^{(1)} - \bA \bx^* }{1}^{1-p}  < 1,
	\end{align}
	then $\texttt{IRLS}_p$ achieves the following superlinear convergence rate of order $2-p$ for every $t\geq 1$:
	\begin{align}\label{eq:Lp-rates}
		\norm{\bA \bx^{(t+1)} - \bA \bx^* }{1} \leq \mu\cdot \Big( \norm{\bA \bx^{(t)} - \bA \bx^* }{1} \Big)^{2-p} < \norm{\bA \bx^{(t)} - \bA \bx^* }{1}
	\end{align}
	In particular, for $p\in[0,1)$, the superlinear rate \eqref{eq:Lp-rates} implies
	\begin{align}\label{eq:Lp-rates2}
		\norm{\bA \bx^{(t+1)} - \bA \bx^* }{1} \leq \Big( \mu^{1/(1-p)} \cdot \norm{\bA \bx^{(1)} - \bA \bx^* }{1} \Big)^{ (2-p)^t-1 } \cdot \norm{\bA \bx^{(1)} - \bA \bx^* }{1}.
	\end{align}
\end{theorem}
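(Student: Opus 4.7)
The plan is to argue by induction on $t\geq 1$ that the invariant $\|\bh^{(t)}\|_1 \leq c\min_{i\in S^*}|r_i^*|$ (the hypothesis \eqref{eq:Lp-convergence-radius} at $t=1$) and the contraction \eqref{eq:Lp-rates} both hold, where $\bh^{(t)} := \bA\bx^{(t)} - \bA\bx^*$ and $r_i^* := \trsp{\ba_i}\bx^*-y_i$. Since $\bh^{(t+1)}$ lies in the range of $\bA$, the $(1,k,\eta)$-RSP applied with $S=S^*$ gives $\|\bh^{(t+1)}\|_1 \leq (1+\eta)A$, where $A := \sum_{i\notin S^*}|h_i^{(t+1)}|$. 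So the task reduces to showing $A \leq 2\eta M_1\|\bh^{(t)}\|_1^{2-p}$ with $M_1 := (1-c)^{p-2}\min_{j\in S^*}|r_j^*|^{p-1}$.

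Under the inductive hypothesis I derive two weight estimates. For $i\in S^*$, the reverse triangle inequality yields $|r_i^{(t)}|\geq(1-c)|r_i^*|$, so $w_i^{(t)}|r_i^*|\leq (1-c)^{p-2}|r_i^*|^{p-1}\leq M_1$ (using $p-1\leq 0$). For $i\notin S^*$, I first control $\epsilon^{(t)}$: since $\br^*$ is $k$-sparse, the best $k$-term $\ell_1$-approximation of $\br^{(t)}=\br^*+\bh^{(t)}$ satisfies $\sigma^{(t)} \leq \|\bh^{(t)}\|_1$, so \eqref{eq:update-Kummerle} gives $\epsilon^{(t)}\leq \|\bh^{(t)}\|_1/m$. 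Combining the identity $\max\{a,b\}^{2-p}=\max\{a^{2-p},b^{2-p}\}\leq a^{2-p}+b^{2-p}$ with $\sum_{i\notin S^*}|h_i^{(t)}|^{2-p} \leq \|\bh^{(t)}\|_\infty^{1-p}\|\bh^{(t)}\|_1 \leq \|\bh^{(t)}\|_1^{2-p}$ and $(m-k)\epsilon^{(t),2-p} \leq m^{p-1}\|\bh^{(t)}\|_1^{2-p}\leq\|\bh^{(t)}\|_1^{2-p}$ then yields $\sum_{i\notin S^*}(w_i^{(t)})^{-1} \leq 2\|\bh^{(t)}\|_1^{2-p}$.

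The crux is a bootstrap argument. Testing the weighted least-squares normal equation $\sum_i w_i^{(t)} r_i^{(t+1)}\ba_i = 0$ against $\bx^{(t+1)} - \bx^*$ gives $\sum_i w_i^{(t)} r_i^{(t+1)} h_i^{(t+1)} = 0$; substituting $r_i^{(t+1)} = r_i^* + h_i^{(t+1)}$ and rearranging yields
\begin{equation*}
\sum_i w_i^{(t)}(h_i^{(t+1)})^2 \;=\; -\sum_{i\in S^*} w_i^{(t)} r_i^* h_i^{(t+1)} \;\leq\; M_1 \sum_{i\in S^*}|h_i^{(t+1)}| \;\leq\; M_1 \eta A,
\end{equation*}
where the last step reapplies RSP. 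Cauchy-Schwarz then gives $A^2 \leq \bigl(\sum_{i\notin S^*}w_i^{(t)}(h_i^{(t+1)})^2\bigr)\bigl(\sum_{i\notin S^*}(w_i^{(t)})^{-1}\bigr) \leq M_1\eta A \cdot 2\|\bh^{(t)}\|_1^{2-p}$; dividing by $A$ yields $A \leq 2M_1\eta\|\bh^{(t)}\|_1^{2-p}$, and multiplying by $(1+\eta)$ produces \eqref{eq:Lp-rates}. To close the induction, the hypothesis $2c^{1-p}\eta(\eta+1) < (1-c)^{2-p}$ ensures $\mu\|\bh^{(t)}\|_1^{1-p} < 1$, so $\|\bh^{(t+1)}\|_1 < \|\bh^{(t)}\|_1 \leq c\min_{i\in S^*}|r_i^*|$ and the invariant propagates. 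Inequality \eqref{eq:Lp-rates2} then follows by routine iteration of \eqref{eq:Lp-rates} after the substitution $b_t := \mu^{1/(1-p)}\|\bh^{(t)}\|_1$.

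The main obstacle, and the reason for the bootstrap step, is obtaining the sharp exponent $2-p$ rather than the weaker $(2-p)/2$ that a direct Cauchy-Schwarz on the standard suboptimality inequality $\sum_i w_i^{(t)}(r_i^{(t+1)})^2 \leq \sum_{i\in S^*}w_i^{(t)}(r_i^*)^2$ would give. The key is that expanding the normal equation around $\bx^*$ renders the weighted $\ell_2$-energy of $\bh^{(t+1)}$ \emph{linear} in $A$ (after one more RSP invocation), so the quadratic-in-$A$ Cauchy-Schwarz inequality solves for $A$ itself rather than for $\sqrt{A}$. A secondary but indispensable ingredient is the adaptive bound $\epsilon^{(t)} \leq \|\bh^{(t)}\|_1/m$ made possible by \eqref{eq:update-Kummerle}, without which $\sum_{i\notin S^*}(w_i^{(t)})^{-1}$ could not be kept of order $\|\bh^{(t)}\|_1^{2-p}$; this is also why the \cite{Daubechies-CPAM2010} rule \eqref{eq:update-Daubechies}, which bounds $\epsilon^{(t)}$ by an individual residual coordinate instead of a full $k$-term tail, is less well-suited for small $p$.
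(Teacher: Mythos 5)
Your proposal is correct and follows essentially the same route as the paper's proof: the normal equation of the weighted least-squares step, the weight bounds on $S^*$ via $|r_i^{(t)}|\geq(1-c)|r_i^*|$, two applications of the $(1,k,\eta)$-RSP, Cauchy--Schwarz followed by division by $A$, and the bound $\sum_{i\notin S^*}(w_i^{(t)})^{-1}\lesssim \|\bh^{(t)}\|_1^{2-p}$ obtained from the update rule \eqref{eq:update-Kummerle}. The only difference is that you state the induction propagating the neighborhood condition \eqref{eq:Lp-convergence-radius} to all iterates explicitly (and could add one line for the trivial case $A=0$, which the paper dispatches via uniqueness of the $k$-sparse residual), whereas the paper leaves this implicit; the substance of the argument is identical.
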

\begin{example}[\textit{Quadratic versus linear rates}]
	We illustrate the practical benefit of a superlinear (quadratic) convergence rate with the following simple calculation. Suppose that the inequality in \eqref{eq:Lp-convergence-radius} is barely fulfilled such that in the case of $p=0$,  $\bx^{(1)}$ satisfies  $\mu\cdot\|\bA \bx^{(1)} - \bA \bx^* \|_1 \leq0.9999999$. Then \eqref{eq:Lp-rates2} implies that after $30$ iterations, the residual error is far below numerical precision already since $\|\bA \bx^{(31)} - \bA \bx^* \|_1\leq 0.9999999^{2^{30}-1}\cdot \|\bA \bx^{(1)} - \bA \bx^* \|_1\approx 10^{-47} \cdot \|\bA \bx^{(1)} - \bA \bx^* \|_1$. On the other hand, for $p=1$, a linear convergence factor $\mu$ of $\mu\leq 0.9999999$ is only able to guarantee a decay of the order $\mu^{30}\leq 0.9999999^{30}\approx0.999997$ after $30$ iterations.
\end{example}
\myparagraph{Discussion} Even though $\texttt{IRLS}_p$ with $p\in(0,1)$ is tailored to $\ell_p$-regression, we measure the progress of $\texttt{IRLS}_p$ in $\ell_1$-norm, e.g., we provide upper bounds on $\| \bA \bx^{(t+1)} - \bA \bx^*\|_1$; this is because doing so avoids the use of H\"older's inequality, which allows us to give tighter results that improve on \cite{Daubechies-CPAM2010} (see the next paragraph). The second point that deserves discussion is the local convergence neighborhood defined by the right-hand side of \eqref{eq:Lp-convergence-radius}. Note that, we typically assume that every outlier sample $(\ba_i,y_i)$ (with $i\in S^*$) would result in a relatively large residual at $\bx^*$, say $|\trsp{\ba_i}\bx^* - y_i|\gg 0$. Hence the minimization term of the right-hand side in \eqref{eq:Lp-convergence-radius} is well-behaved. Next, one might ask how to obtain such an initialization $\bx^{(1)}$ that satisfies \eqref{eq:Lp-convergence-radius}. One might run $\texttt{IRLS}_1$ to produce such an initialization; note though that the upper bound of \eqref{eq:Lp-convergence-radius} can not be computed, so one could not detect when to switch from $\texttt{IRLS}_1$ to $\texttt{IRLS}_p$ ($p\in[0,1)$). Or alternatively, one could run $\texttt{IRLS}_p$ with a least-squares initialization and count on empirical global (super)linear convergence of $\texttt{IRLS}_p$; the latter is what we did in the experiments and is what we recommend. A final and important remark is that $\ell_p$-regression \eqref{eq:Lp} with $p\in(0,1)$ is in general NP-hard \cite[Exercise 2.10]{Foucart-2013}, but this does not contradict the theoretical local superlinear convergence of Theorem \ref{theorem:LSC-noiseless} and the empirical global convergence in Figures \ref{fig:LpRR} and \ref{fig:SLR1}, and this does not mean that $\texttt{IRLS}_p$ solves an NP-hard problem in polynomial time. The catch is that we operate under the assumption that $\bx^*$ leads to a $k$-sparse residual and is a unique global minimizer of \eqref{eq:Lp} (cf. Proposition \ref{prop:RSP=exact-recovery}). With this assumption and small enough $k/m$, $\ell_p$-regression is tractable and can be solved via $\texttt{IRLS}_p$ to high accuracy (Figure \ref{fig:LpRR}).

% This assumption, small enough $k/m$, Figure \ref{fig:LpRR} shows that $\ell_p$-regression

% \CK{a region where the non-convexity is not scary }\noteC{<- this sentence is hard to understand in the context}.

% to actually have such an initialization $\bx^{(1)}$ that satisfies \eqref{eq:Lp-convergence-radius}, one could run $\texttt{IRLS}_1$, which leads to iterates $\bx^{(t)}$ provably convergent to $\bx^*$ (Theorem \ref{theorem:GLC-L1}) and eventually satisfy \eqref{eq:Lp-convergence-radius}. Alternatively, one can just run $\texttt{IRLS}_p$ with a least-square initialization and count on empirical global superlinear convergence of $\texttt{IRLS}_p$; this is what we did in experiments and what we recommend.

\myparagraph{Connection to \cite{Daubechies-CPAM2010}} \cite[Theorem 7.9]{Daubechies-CPAM2010} proves the local superlinear convergence of IRLS with update rule \eqref{eq:update-Daubechies} for $\ell_p$-basis pursuit \eqref{eq:Lp-BP}, which motivates Theorem \ref{theorem:LSC-noiseless}. However, \cite[Theorem 7.9]{Daubechies-CPAM2010} requires $c$ of \eqref{eq:Lp-convergence-radius} to satisfy $c = O(1/m^{2/p-1})$, which means that, as $p\to 0$ or $m\to \infty$, the eligible value of $c$ quickly becomes vanishingly small, and thus the radius of local convergence \eqref{eq:Lp-convergence-radius} diminishes greatly. In contrast, in our condition $2 c^{1-p}\eta (\eta+1)<(1-c)^{2-p}$, $c$ has no direct dependency on $m$ and is well-behaved even if $p\to 0$. The above difference is due partly to our improved proof strategy, and partly to the different update rules of the smoothing parameter $\epsilon^{(t)}$. Figure \ref{fig:LpRR} showed that rule \eqref{eq:update-Daubechies} of \cite{Daubechies-CPAM2010} does not work well for small $p$, in contrast to rule \eqref{eq:update-Kummerle} that we use; a possible explanation for this phenomenon is that rule \eqref{eq:update-Daubechies} decreases $\epsilon^{(t)}$ too fast, yielding a poor initialization for the next iteration of weighted least-squares (similarly to the interior point method, cf. \cite{Boyd-book2004}). Finally, \cite[Theorem 7.9]{Daubechies-CPAM2010} does not hold for $p=0$, for which Theorem \ref{theorem:LSC-noiseless} still holds and suggests a local quadratic rate.

\section{Applications and Experiments}\label{section:applications}
We now explore the performance of $\texttt{IRLS}_p$ in three different applications, \textit{real phase retrieval} (Section \ref{subsection:RPR}), \textit{linear regression without correspondences} (Section \ref{subsection:SLR}), and \textit{face restoration} (Section \ref{subsection:face}). Note that the first two applications are special examples of the recent algebraic-geometric framework called  \textit{homomorphic sensing} \cite{Tsakiris-ICML2019}, \cite[Section 4]{Peng-ICML2021}, \cite[Section 1.2]{Peng-ACHA2021}. In Section \ref{subsection:choice-p}, we examine the behavior of $\texttt{IRLS}_p$ for different values of $p$ and under noise.

\subsection{Real Phase Retrieval}\label{subsection:RPR}
In real phase retrieval \cite{Chen-MP2019,Tan-AA-J-IMA2019}, we are given a measurement matrix $\bA=\trsp{[\ba_1,\dots,\ba_m]}$ and $\by=\trsp{[y_1,\dots,y_m]}$, where $y_i:=|\trsp{\ba_i} \bx^*|$, and we need to find either $\bx^*$ or $-\bx^*$. This problem is a relative of the complex phase retrieval problem \cite{Balan-ACHA2006}, where all data $\by$ and $\bA$, as well as the ground-truth $\bx^*$, are complex-valued, and which has applications in X-ray crystallography \cite{Grohs-SIAM-Review2020}.

Here we show that real phase retrieval can be solved via $\ell_p$-regression \eqref{eq:Lp}. Consider the index sets
\begin{align}\label{eq:I}
	I^+:=\{i:\trsp{\ba_i} \bx^* > 0\}, \ \ \ \ \ \ \  I^{-}:=\{i:\trsp{\ba_i} \bx^* < 0\} .
\end{align} 
We might assume $\trsp{\ba_i} \bx^* \neq 0$ for every $i$ without loss of generality. Then we know that either $\bA \bx^*-\by$ has its $\ell_0$ norm equal to $m-|I^{-}|$, or $\bA (-\bx^*)-\by$ has its $\ell_0$ norm equal to $m-|I^{+}|$. With such sparsity patterns on these two residuals, we can minimize \eqref{eq:Lp}, which serves as a (non-convex) relaxation of $\ell_0$-minimization, to recover $\bx^*$ or $-\bx^*$, whichever corresponds to a sparser residual. Here, we essentially treat one of the two clusters defined by \eqref{eq:I} as inliers, and the other as outliers. This robust regression point of view on real phase retrieval seems to be known by experts \cite[Section 1.2]{Huang-arXiv2021b}, but we have not found a paper that actually proposes to solve \eqref{eq:Lp} for real phase retrieval. %\rene{I think this approach throws away information and ultimately requires more measurements. I think a cleaner approach would be to solve $\|y+Ax\|_1+\|y-Ax|_1$. Then number of inlier/outlier ratio is 0.5.}

Here we argue with Figure \ref{fig:RPR} that, solving \eqref{eq:Lp} (by $\texttt{IRLS}_{0.1}$) for real phase retrieval can be beneficial. Indeed, theoretically, $\texttt{IRLS}_{0.1}$ converges to $\pm\bx^*$ locally but superlinearly (as a corollary of Theorem~\ref{theorem:LSC-noiseless}). Empirically, Figure \ref{fig:RPR} shows that $\texttt{IRLS}_{0.1}$ succeeds in recovering $\pm\bx^*$ by using only $m=2n-1$ samples, which is exactly the theoretical minimum for real phase retrieval \cite[Proposition 2.5]{Balan-ACHA2006}. Figure \ref{fig:RPR} also shows that multiple other state-of-the-art methods\footnote{Reviewing these algorithms is beyond the scope of this paper. Some of them are designed for complex phase retrieval but is applicable to the real case. We used the implementations from PhasePack \cite{Chandra-SampTA2019}.} \cite{Tan-AA-J-IMA2019,Wei-IP2015,Dhifallah-Allerton2017,Chen-NIPS2015,Zeng-arXiv2017} fail to recover $\pm \bx^*$ in this extreme situation, even though some of them have \textit{nearly optimal} sample complexity, which requires roughly $O(n)$ samples up to a logarithmic factor to succeed (\textit{caution}: nearly optimal $\neq$ optimal). As suggested by Proposition \ref{prop:Gaussian-RSP}, for $|I^+|$ fixed, minimizing \eqref{eq:Lp} with a Gaussian matrix $\bA$ enjoys $m=O(n)$ sample complexity, up to also a logarithmic factor.

Despite these advantages, we also emphasize that solving \eqref{eq:Lp} (via $\texttt{IRLS}_{0.1}$) for real phase retrieval has an inherent limit: The performance depends on the number $|I^+|$ of positive signs and the number $m$ of samples; we might call $|I^+|/m$ the inlier rate or outlier rate. At $n/m=200/399$, we see that $\texttt{IRLS}_{0.1}$ succeeds if $|I^+|\leq 70$, and if $|I^+| \geq 399-70$ by symmetry, but it fails if $|I^+|$ and $|I^-|$ get closer (Figure \ref{fig:RPR}). For $\texttt{IRLS}_{0.1}$ to successfully handle the case $|I^+|=|I^-|$, we need more samples, empirically, say $m\geq 5n$. It is an interesting future direction to design an algorithm that can handle the case of minimum samples ($m=2n-1$) and balanced data ($|I^+|= |I^-|$).

\begin{figure}
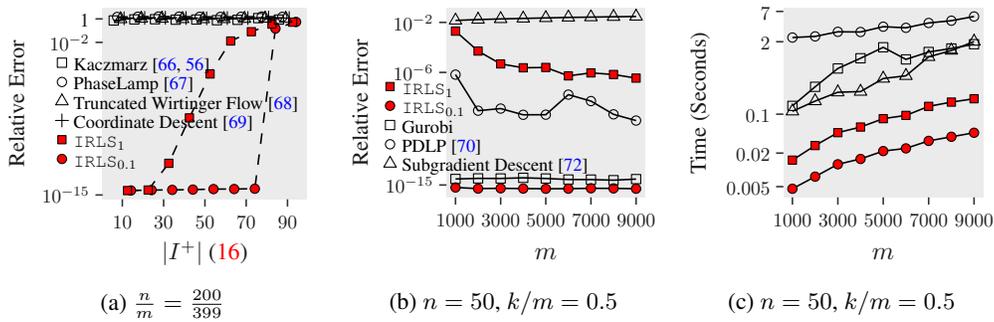

	\centering
	\subfloat[$\frac{n}{m}=\frac{200}{399}$]{\input{./figures/experiment_RPR_minimum_samples.tex} \label{fig:RPR} } 
	\subfloat[$n=50$, $k/m=0.5$]{\input{./figures/experiment_SLR_error_m.tex} \label{fig:SLR1} }
	\subfloat[$n=50$, $k/m=0.5$]{\input{./figures/experiment_SLR_time_m.tex} \label{fig:SLR2} }
	\caption{Figure \ref{fig:RPR}: Relative error $\min\{ \| \hat{\bx}- \bx^*\|_2, \| \hat{\bx} + \bx^*\|_2 \} /\| \bx^*\|_2$ of the methods that produce estimates $\hat{\bx}$ for real phase retrieval. Figures \ref{fig:SLR1}-\ref{fig:SLR2}: Relative errors $\| \hat{\bx}- \bx^*\|_2 /\| \bx^*\|_2$ and running times for linear regression without correspondences. $\texttt{IRLS}_p$ run for at most $50$ iterations. $50$ trials. \label{fig:application}}
\end{figure}

\subsection{Linear Regression without Correspondences}\label{subsection:SLR}
Compared to (real) phase retrieval, the problem of \textit{linear regression without correspondences} is an also important, but less developed subject; see \cite{Unnikrishnan-TIT18,Pananjady-TIT18,Slawski-JoS19,Tsakiris-TIT2020,Peng-SPL2020,Slawski-JCGS2021,Xie-ICLR2021,Li-ICCV2021} for recent advances. In this problem we are given $\by\in\bbR^m$ and $\bA\in\bbR^{m\times n}$, and we need to solve the equations
\begin{align}\label{eq:SLR}
	\by = \bPi \bA \bx 
\end{align}
for some unknown permutation matrix $\bPi\in\bbR^{m\times m}$ and vector $\bx\in\bbR^n$, under the assumption that there exists a solution $(\bPi^*,\bx^*)$. Solving \eqref{eq:SLR} is in general NP-hard for $n>1$ \cite{Pananjady-TIT18,Hsu-NIPS17}. However, if we further assume that $\bPi^*$ permutes at most $k$ rows of $\bA$, then we know that the residual $\bA \bx^* - \by$ is $k$-sparse. This is an insight of \cite{Slawski-JoS19}, where it was proposed to estimate $\bx^*$ by solving \eqref{eq:Lp} with $p=1$.

We show that $\texttt{IRLS}_p$ with $p\in(0,1)$ is more suitable than several baselines for solving this robust regression problem reduced from linear regression without correspondences. As baselines, we use the PDLP \cite{Applegate-NeurIPS2021} and Gurobi \cite{Gurobi951}  solvers to solve \eqref{eq:Lp} with $p=1$ as a linear program, and we also use a subgradient descent method implemented in the FOM toolbox of Beck \& Guttmann-Beck \cite{Beck-OMS2019}. Figure \ref{fig:SLR1} shows that $\texttt{IRLS}_{0.1}$ is the most accurate, and Figure \ref{fig:SLR2} shows that $\texttt{IRLS}_{0.1}$ is $30$ times faster than Gurobi and subgradient descent, and is more than $100$ times faster than PDLP. Interestingly, $\texttt{IRLS}_{1}$ is not very competitive in terms accuracy (Figure \ref{fig:SLR1}). Moreover, $\texttt{IRLS}_1$ is slower than $\texttt{IRLS}_{0.1}$, as $\texttt{IRLS}_{0.1}$ converges faster and thus terminates earlier than $\texttt{IRLS}_1$; we terminate $\texttt{IRLS}_{p}$ whenever the objective stops decreasing (up to a tolerance $10^{-15}$). Finally, see also \cite{Tsakiris-JMLR2018,Aftab-WCACV2015,Dong-PRL2019,Iwata-ECCV2020,Ding-CVPR2020,Yang-RA-L2020} where their IRLS variants outperform a different set of baselines (for different problems).

% Figures \ref{fig:SLR1}-\ref{fig:SLR2} empirically confirm our preference for $\texttt{IRLS}_p$ with $p<1$.

\subsection{Face Restoration from Sparsely Corrupted Measurements}\label{subsection:face}
Here we consider a simple face restoration experiment on the Extended Yale B dataset \cite{Georghiades-PAMI2001}, downsampled as per \cite{You-CVPR2016}. This dataset contains the face images of $38$ individuals under about $n\approx60$ different illuminations. Let $\bF=[\bm{f}_1,\dots,\bm{f}_{n+1}]$ be the matrix of faces of the same individual, where each $\bm{f}_i\in\mathbb{R}^{m}$ is a (vectorized) face image with $m=2,016$ pixels. We assume that $\bF$ is approximately low-rank (which is true under the \textit{Lambertian reflectance} assumption \cite{Basri-TPAMI2003}), thus each $\bm{f}_i$ can be approximately represented as a linear combination of other faces of the individual, i.e., $\bm{f}_i\approx\bF_{i} \bx_i^*$ for some $\bx_i^*\in\bbR^n$, where $\bF_{i}\in\bbR^{m\times n}$ is the same as $\bF$, except with the $i$-th column excluded.

% The matrix of faces $\bF=[\bm{f}_1,\dots,\bm{f}_{n+1}]$\rene{why $n+1$ instead of $n$?} of the same individual, where each $\bm{f}_i\in\mathbb{R}^{m}$ is a (vectorized) face image with $m=2,016$ pixels, is approximately of rank $\ll n$ \cite{Vidal-GPCA2016},\rene{This is not true. The rank is 3 or 9 depending on the model. David Jacobs papers, the original source, should be cited instead} thus every $\bm{f}_i$ can be approximately represented as a linear combination of other faces of the individual, that is $\bm{f}_i\approx\bF_{i} \bx_i^*$ for some $\bx_i^*\in\bbR^n$, where $\bF_{i}$ is the same as $\bF$, except with the $i$-th column excluded.\rene{This is common in subspace clustering, but here, are we reconstructing or classifying?}

\begin{figure}
	\centering
	\subfloat[Quantitative results]{% Created by tikzDevice version 0.12.3.1 on 2022-05-05 22:24:56
% !TEX encoding = UTF-8 Unicode
\begin{tikzpicture}[x=1pt,y=1pt]
\definecolor{fillColor}{RGB}{255,255,255}
\path[use as bounding box,fill=fillColor,fill opacity=0.00] (0,0) rectangle (122.86,108.41);
\begin{scope}
\path[clip] (  0.00,  0.00) rectangle (122.86,108.41);
\definecolor{drawColor}{RGB}{255,255,255}
\definecolor{fillColor}{RGB}{255,255,255}

\path[draw=drawColor,line width= 0.6pt,line join=round,line cap=round,fill=fillColor] ( -0.00,  0.00) rectangle (122.86,108.41);
\end{scope}
\begin{scope}
\path[clip] ( 37.48, 31.17) rectangle (117.36,102.90);
\definecolor{fillColor}{gray}{0.92}

\path[fill=fillColor] ( 37.48, 31.17) rectangle (117.36,102.91);
\definecolor{drawColor}{RGB}{0,0,0}

\path[draw=drawColor,line width= 0.6pt,line join=round] ( 41.11, 34.43) --
	( 53.21, 34.43) --
	( 65.32, 34.43) --
	( 77.42, 34.43) --
	( 89.52, 34.43) --
	(101.63, 34.43) --
	(113.73, 34.43);

\path[draw=drawColor,line width= 0.6pt,line join=round] ( 41.11, 40.62) --
	( 53.21, 49.03) --
	( 65.32, 58.37) --
	( 77.42, 68.22) --
	( 89.52, 78.48) --
	(101.63, 88.96) --
	(113.73, 99.64);

\path[draw=drawColor,line width= 0.6pt,line join=round] ( 41.11, 42.11) --
	( 53.21, 42.75) --
	( 65.32, 43.45) --
	( 77.42, 44.42) --
	( 89.52, 45.62) --
	(101.63, 47.05) --
	(113.73, 48.64);

\path[draw=drawColor,line width= 0.4pt,line join=round,line cap=round] ( 39.36, 32.68) rectangle ( 42.86, 36.18);

\path[draw=drawColor,line width= 0.4pt,line join=round,line cap=round] ( 51.46, 32.68) rectangle ( 54.96, 36.18);

\path[draw=drawColor,line width= 0.4pt,line join=round,line cap=round] ( 63.57, 32.68) rectangle ( 67.06, 36.18);

\path[draw=drawColor,line width= 0.4pt,line join=round,line cap=round] ( 75.67, 32.68) rectangle ( 79.17, 36.18);

\path[draw=drawColor,line width= 0.4pt,line join=round,line cap=round] ( 87.77, 32.68) rectangle ( 91.27, 36.18);

\path[draw=drawColor,line width= 0.4pt,line join=round,line cap=round] ( 99.88, 32.68) rectangle (103.37, 36.18);

\path[draw=drawColor,line width= 0.4pt,line join=round,line cap=round] (111.98, 32.68) rectangle (115.48, 36.18);

\path[draw=drawColor,line width= 0.4pt,line join=round,line cap=round] ( 41.11, 40.62) circle (  1.96);

\path[draw=drawColor,line width= 0.4pt,line join=round,line cap=round] ( 53.21, 49.03) circle (  1.96);

\path[draw=drawColor,line width= 0.4pt,line join=round,line cap=round] ( 65.32, 58.37) circle (  1.96);

\path[draw=drawColor,line width= 0.4pt,line join=round,line cap=round] ( 77.42, 68.22) circle (  1.96);

\path[draw=drawColor,line width= 0.4pt,line join=round,line cap=round] ( 89.52, 78.48) circle (  1.96);

\path[draw=drawColor,line width= 0.4pt,line join=round,line cap=round] (101.63, 88.96) circle (  1.96);

\path[draw=drawColor,line width= 0.4pt,line join=round,line cap=round] (113.73, 99.64) circle (  1.96);
\definecolor{fillColor}{RGB}{255,0,0}

\path[draw=drawColor,line width= 0.4pt,line join=round,line cap=round,fill=fillColor] ( 41.11, 42.11) circle (  1.96);

\path[draw=drawColor,line width= 0.4pt,line join=round,line cap=round,fill=fillColor] ( 53.21, 42.75) circle (  1.96);

\path[draw=drawColor,line width= 0.4pt,line join=round,line cap=round,fill=fillColor] ( 65.32, 43.45) circle (  1.96);

\path[draw=drawColor,line width= 0.4pt,line join=round,line cap=round,fill=fillColor] ( 77.42, 44.42) circle (  1.96);

\path[draw=drawColor,line width= 0.4pt,line join=round,line cap=round,fill=fillColor] ( 89.52, 45.62) circle (  1.96);

\path[draw=drawColor,line width= 0.4pt,line join=round,line cap=round,fill=fillColor] (101.63, 47.05) circle (  1.96);

\path[draw=drawColor,line width= 0.4pt,line join=round,line cap=round,fill=fillColor] (113.73, 48.64) circle (  1.96);
\end{scope}
\begin{scope}
\path[clip] (  0.00,  0.00) rectangle (122.86,108.41);
\definecolor{drawColor}{gray}{0.10}

\node[text=drawColor,anchor=base east,inner sep=0pt, outer sep=0pt, scale=  0.73] at ( 32.53, 29.72) {$0.1$};

\node[text=drawColor,anchor=base east,inner sep=0pt, outer sep=0pt, scale=  0.73] at ( 32.53, 53.14) {$0.2$};

\node[text=drawColor,anchor=base east,inner sep=0pt, outer sep=0pt, scale=  0.73] at ( 32.53, 76.55) {$0.3$};

\node[text=drawColor,anchor=base east,inner sep=0pt, outer sep=0pt, scale=  0.73] at ( 32.53, 92.95) {$0.37$};
\end{scope}
\begin{scope}
\path[clip] (  0.00,  0.00) rectangle (122.86,108.41);
\definecolor{drawColor}{gray}{0.20}

\path[draw=drawColor,line width= 0.6pt,line join=round] ( 34.73, 32.75) --
	( 37.48, 32.75);

\path[draw=drawColor,line width= 0.6pt,line join=round] ( 34.73, 56.17) --
	( 37.48, 56.17);

\path[draw=drawColor,line width= 0.6pt,line join=round] ( 34.73, 79.58) --
	( 37.48, 79.58);

\path[draw=drawColor,line width= 0.6pt,line join=round] ( 34.73, 95.98) --
	( 37.48, 95.98);
\end{scope}
\begin{scope}
\path[clip] (  0.00,  0.00) rectangle (122.86,108.41);
\definecolor{drawColor}{gray}{0.20}

\path[draw=drawColor,line width= 0.6pt,line join=round] ( 41.11, 28.42) --
	( 41.11, 31.17);

\path[draw=drawColor,line width= 0.6pt,line join=round] ( 65.32, 28.42) --
	( 65.32, 31.17);

\path[draw=drawColor,line width= 0.6pt,line join=round] ( 89.52, 28.42) --
	( 89.52, 31.17);

\path[draw=drawColor,line width= 0.6pt,line join=round] (113.73, 28.42) --
	(113.73, 31.17);
\end{scope}
\begin{scope}
\path[clip] (  0.00,  0.00) rectangle (122.86,108.41);
\definecolor{drawColor}{gray}{0.10}

\node[text=drawColor,anchor=base,inner sep=0pt, outer sep=0pt, scale=  0.73] at ( 41.11, 20.16) {$10\%$};

\node[text=drawColor,anchor=base,inner sep=0pt, outer sep=0pt, scale=  0.73] at ( 65.32, 20.16) {$30\%$};

\node[text=drawColor,anchor=base,inner sep=0pt, outer sep=0pt, scale=  0.73] at ( 89.52, 20.16) {$50\%$};

\node[text=drawColor,anchor=base,inner sep=0pt, outer sep=0pt, scale=  0.73] at (113.73, 20.16) {$70\%$};
\end{scope}
\begin{scope}
\path[clip] (  0.00,  0.00) rectangle (122.86,108.41);
\definecolor{drawColor}{gray}{0.10}

\node[text=drawColor,anchor=base,inner sep=0pt, outer sep=0pt, scale=  0.96] at ( 77.42,  7.75) {$k/m$};
\end{scope}
\begin{scope}
\path[clip] (  0.00,  0.00) rectangle (122.86,108.41);
\definecolor{drawColor}{gray}{0.10}

\node[text=drawColor,rotate= 90.00,anchor=base,inner sep=0pt, outer sep=0pt, scale=  0.85] at ( 12.49, 67.04) {Relative Error};
\end{scope}
\begin{scope}
\path[clip] (  0.00,  0.00) rectangle (122.86,108.41);
\definecolor{drawColor}{RGB}{0,0,0}

\path[draw=drawColor,line width= 0.4pt,line join=round,line cap=round] ( 39.81, 91.35) rectangle ( 43.30, 94.84);
\end{scope}
\begin{scope}
\path[clip] (  0.00,  0.00) rectangle (122.86,108.41);
\definecolor{drawColor}{RGB}{0,0,0}

\path[draw=drawColor,line width= 0.4pt,line join=round,line cap=round] ( 41.56, 85.81) circle (  1.96);
\end{scope}
\begin{scope}
\path[clip] (  0.00,  0.00) rectangle (122.86,108.41);
\definecolor{drawColor}{RGB}{0,0,0}
\definecolor{fillColor}{RGB}{255,0,0}

\path[draw=drawColor,line width= 0.4pt,line join=round,line cap=round,fill=fillColor] ( 41.56, 78.52) circle (  1.96);
\end{scope}
\begin{scope}
\path[clip] (  0.00,  0.00) rectangle (122.86,108.41);
\definecolor{drawColor}{RGB}{0,0,0}

\node[text=drawColor,anchor=base west,inner sep=0pt, outer sep=0pt, scale=  0.69] at ( 46.29, 90.25) {Least-Squares$^*$};
\end{scope}
\begin{scope}
\path[clip] (  0.00,  0.00) rectangle (122.86,108.41);
\definecolor{drawColor}{RGB}{0,0,0}

\node[text=drawColor,anchor=base west,inner sep=0pt, outer sep=0pt, scale=  0.69] at ( 46.29, 82.97) {Least-Squares};
\end{scope}
\begin{scope}
\path[clip] (  0.00,  0.00) rectangle (122.86,108.41);
\definecolor{drawColor}{RGB}{0,0,0}

\node[text=drawColor,anchor=base west,inner sep=0pt, outer sep=0pt, scale=  0.69] at ( 46.29, 75.68) {$\texttt{IRLS}_{0.1}$};
\end{scope}
\end{tikzpicture} \label{fig:EYaleB-err} }
	\subfloat[Qualitative results (randomly selected images)]{ \label{fig:EYaleB-table}
	\begin{tabular}[b]{ccccc}
		\toprule
		\scriptsize{Ground-Truth $\bm{f}_i$} & \scriptsize{Measurements $\tilde{\bm{f}}_i$} & \scriptsize{Least-Squares$^*$} & \scriptsize{Least-Squares} & \scriptsize{$\texttt{IRLS}_{0.1}$} \\
		\midrule 
		\includegraphics[width=0.05\textwidth]{./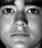}&\includegraphics[width=0.05\textwidth]{./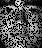} &\includegraphics[width=0.05\textwidth]{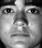}&\includegraphics[width=0.05\textwidth]{./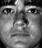}& \includegraphics[width=0.05\textwidth]{./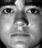} \\
		\includegraphics[width=0.05\textwidth]{./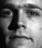}&\includegraphics[width=0.05\textwidth]{./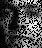} &\includegraphics[width=0.05\textwidth]{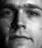}&\includegraphics[width=0.05\textwidth]{./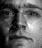}& \includegraphics[width=0.05\textwidth]{./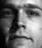} \\
		\includegraphics[width=0.05\textwidth]{./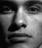}&\includegraphics[width=0.05\textwidth]{./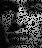} &\includegraphics[width=0.05\textwidth]{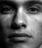}&\includegraphics[width=0.05\textwidth]{./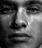}& \includegraphics[width=0.05\textwidth]{./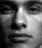} \\
		\bottomrule
	\end{tabular}	
}
\caption{Recover a face image of the Extended Yale B dataset \cite{Georghiades-PAMI2001} that is corrupted by \textit{salt \& pepper} noise \cite{Solomon-2011}. Figure \ref{fig:EYaleB-err}: relative error as a function of the amount of random salt \& pepper noise or sparsity level, averaged over $20$ trials, all faces, and all individuals. Figure \ref{fig:EYaleB-table}: qualitative results. \label{fig:EYaleB} }
\end{figure}
 
In our experiments of Figure \ref{fig:EYaleB}, we corrupt $\bm{f}_i$ by random salt \& pepper noise \cite{Solomon-2011} and obtain the noisy face $\tilde{\bm{f}}_i$ as our measurement; so $\bF_{i} \bx_i^*- \tilde{\bm{f}}_i$ is approximately sparse. We consider three methods to estimate $\bx_i^*$ and thus to find an estimate $\hat{\bm{f}}_i$ of $\bm{f}_i$: (1) Least-Squares that solves \eqref{eq:Lp} with $\bA =\bF_i$, $\by=\tilde{\bm{f}}_i$, and $p=2$, (2) Least-Squares$^*$ that solves \eqref{eq:Lp} with $\bA =\bF_i$, $\by=\bm{f}_i$, and $p=2$, used as a golden baseline, and (3) $\texttt{IRLS}_{0.1}$ which solves \eqref{eq:Lp} with $\bA =\bF_i$ and $\by=\tilde{\bm{f}}_i$. Figure \ref{fig:EYaleB-err} shows that $\texttt{IRLS}_{0.1}$ is more robust to salt \& pepper noise, whose amount is $k/m$, than Least-Squares, and has lower error $\| \hat{\bm{f}}_i-\bm{f}_i\|_2 /\| \bm{f}_i \|_2$ (recall though that Least-Squares is statistically optimal for Gaussian noise). Figure \ref{fig:EYaleB-table} shows some images randomly chosen from the dataset; observe that Least-Squares tends to ``average'' the illumination in all images $\bF_i$, while $\texttt{IRLS}_{0.1}$ delivers faithful restoration.

\subsection{The Choice of $p$ and Performance of $\texttt{IRLS}_p$ Under Noise}\label{subsection:choice-p}
Here we outline two experiments that illustrate two different aspects of $\texttt{IRLS}_p$ empirically. 

In the first experiment, visualized in Figure \ref{fig:LpRR_noiseless}, we compare the decay of the relative errors of the iterates of $\texttt{IRLS}_p$ for different choices of $p$ in the case that the residual is exactly $k$-sparse. We observe that $\texttt{IRLS}_1$ exhibits a linear error decay on the one hand and a superlinear decay for  $\texttt{IRLS}_p$ with $p=0.5$ and $p=0.1$ that accelerates as $p$ decreases towards $0$, confirming the rates predicted by Theorem \ref{theorem:GLC-L1} and Theorem \ref{theorem:LSC-noiseless}. ($p=1,0.5,0.1$ are rather arbitrary choices in our experiments.)

In another experiment, visualized in Figure \ref{fig:LpRR_noisy}, we explore the robustness of $\texttt{IRLS}_p$ if the model assumption of $k$-sparse residuals is not satisfied, but only approximately. In particular, the input vector $\by \in \bbR^m$ of $\texttt{IRLS}_p$ is now such that its restriction on $S^*$ with $|S^*|= k$ has i.i.d. $\cN(0,1)$ entries (corresponding to the sparse corruptions), but its other coordinates are distributed as independent Gaussian variables with mean $\ba_i^\top \bx^*$  and variance $0.01$, where $i \in (S^*)^c$ (corresponding to dense noise); see also the code provided in Appendix \ref{subsection:data-generation}. We observe that also for this case of only approximately $k$-sparse residuals, $\texttt{IRLS}_p$ performs well, with faster and more accurate convergence for $p=0.1$ and $p=0.5$ compared to $p=1$. We note that, strictly speaking, Theorem \ref{theorem:GLC-L1} and Theorem \ref{theorem:LSC-noiseless} do not apply directly to this case of approximately $k$-sparse residuals, but it is not hard to generalize it to this case; see \cite[Theorem A.1]{Kummerle-NeurIPS2021} for a similar result for IRLS applied to basis pursuit.

\begin{figure}
	\centering
	\subfloat[The exact $k$-sparse case]{\input{./figures/experimentLpRR_convergence_noiseless.tex} \label{fig:LpRR_noiseless}  } 
	\subfloat[The approximate $k$-sparse case]{% Created by tikzDevice version 0.12.3.1 on 2022-05-25 15:20:14
% !TEX encoding = UTF-8 Unicode
\begin{tikzpicture}[x=1pt,y=1pt]
\definecolor{fillColor}{RGB}{255,255,255}
\path[use as bounding box,fill=fillColor,fill opacity=0.00] (0,0) rectangle (133.70,112.02);
\begin{scope}
\path[clip] (  0.00,  0.00) rectangle (133.70,112.02);
\definecolor{drawColor}{RGB}{255,255,255}
\definecolor{fillColor}{RGB}{255,255,255}

\path[draw=drawColor,line width= 0.6pt,line join=round,line cap=round,fill=fillColor] (  0.00,  0.00) rectangle (133.70,112.02);
\end{scope}
\begin{scope}
\path[clip] ( 39.51, 29.93) rectangle (128.20,106.52);
\definecolor{fillColor}{gray}{0.92}

\path[fill=fillColor] ( 39.51, 29.93) rectangle (128.20,106.52);
\definecolor{drawColor}{RGB}{0,0,0}

\path[draw=drawColor,line width= 0.6pt,line join=round] ( 43.54,103.04) --
	( 52.50, 89.84) --
	( 61.46, 75.55) --
	( 70.42, 61.68) --
	( 79.37, 52.10) --
	( 88.33, 47.25) --
	( 97.29, 44.83) --
	(106.25, 43.68) --
	(115.21, 43.16) --
	(124.17, 42.93);

\path[draw=drawColor,line width= 0.6pt,line join=round] ( 43.54,103.04) --
	( 52.50, 84.07) --
	( 61.46, 54.99) --
	( 70.42, 44.79) --
	( 79.37, 39.06) --
	( 88.33, 35.69) --
	( 97.29, 34.20) --
	(106.25, 33.66) --
	(115.21, 33.48) --
	(124.17, 33.41);

\path[draw=drawColor,line width= 0.6pt,line join=round] ( 43.54,103.04) --
	( 52.50, 80.50) --
	( 61.46, 45.40) --
	( 70.42, 40.05) --
	( 79.37, 36.62) --
	( 88.33, 34.95) --
	( 97.29, 34.35) --
	(106.25, 34.22) --
	(115.21, 34.24) --
	(124.17, 34.29);
\definecolor{fillColor}{RGB}{255,0,0}

\path[draw=drawColor,line width= 0.4pt,line join=round,line cap=round,fill=fillColor] ( 41.99,101.49) rectangle ( 45.09,104.59);

\path[draw=drawColor,line width= 0.4pt,line join=round,line cap=round,fill=fillColor] ( 50.95, 88.29) rectangle ( 54.05, 91.39);

\path[draw=drawColor,line width= 0.4pt,line join=round,line cap=round,fill=fillColor] ( 59.91, 74.00) rectangle ( 63.01, 77.10);

\path[draw=drawColor,line width= 0.4pt,line join=round,line cap=round,fill=fillColor] ( 68.87, 60.13) rectangle ( 71.96, 63.23);

\path[draw=drawColor,line width= 0.4pt,line join=round,line cap=round,fill=fillColor] ( 77.83, 50.55) rectangle ( 80.92, 53.65);

\path[draw=drawColor,line width= 0.4pt,line join=round,line cap=round,fill=fillColor] ( 86.78, 45.70) rectangle ( 89.88, 48.80);

\path[draw=drawColor,line width= 0.4pt,line join=round,line cap=round,fill=fillColor] ( 95.74, 43.28) rectangle ( 98.84, 46.38);

\path[draw=drawColor,line width= 0.4pt,line join=round,line cap=round,fill=fillColor] (104.70, 42.13) rectangle (107.80, 45.23);

\path[draw=drawColor,line width= 0.4pt,line join=round,line cap=round,fill=fillColor] (113.66, 41.62) rectangle (116.76, 44.71);

\path[draw=drawColor,line width= 0.4pt,line join=round,line cap=round,fill=fillColor] (122.62, 41.38) rectangle (125.72, 44.48);

\path[draw=drawColor,line width= 0.4pt,line join=round,line cap=round,fill=fillColor] ( 43.54,105.76) --
	( 45.89,101.68) --
	( 41.19,101.68) --
	cycle;

\path[draw=drawColor,line width= 0.4pt,line join=round,line cap=round,fill=fillColor] ( 52.50, 86.78) --
	( 54.85, 82.71) --
	( 50.14, 82.71) --
	cycle;

\path[draw=drawColor,line width= 0.4pt,line join=round,line cap=round,fill=fillColor] ( 61.46, 57.71) --
	( 63.81, 53.63) --
	( 59.10, 53.63) --
	cycle;

\path[draw=drawColor,line width= 0.4pt,line join=round,line cap=round,fill=fillColor] ( 70.42, 47.51) --
	( 72.77, 43.44) --
	( 68.06, 43.44) --
	cycle;

\path[draw=drawColor,line width= 0.4pt,line join=round,line cap=round,fill=fillColor] ( 79.37, 41.78) --
	( 81.73, 37.70) --
	( 77.02, 37.70) --
	cycle;

\path[draw=drawColor,line width= 0.4pt,line join=round,line cap=round,fill=fillColor] ( 88.33, 38.41) --
	( 90.69, 34.33) --
	( 85.98, 34.33) --
	cycle;

\path[draw=drawColor,line width= 0.4pt,line join=round,line cap=round,fill=fillColor] ( 97.29, 36.92) --
	( 99.65, 32.84) --
	( 94.94, 32.84) --
	cycle;

\path[draw=drawColor,line width= 0.4pt,line join=round,line cap=round,fill=fillColor] (106.25, 36.38) --
	(108.60, 32.30) --
	(103.90, 32.30) --
	cycle;

\path[draw=drawColor,line width= 0.4pt,line join=round,line cap=round,fill=fillColor] (115.21, 36.19) --
	(117.56, 32.12) --
	(112.86, 32.12) --
	cycle;

\path[draw=drawColor,line width= 0.4pt,line join=round,line cap=round,fill=fillColor] (124.17, 36.13) --
	(126.52, 32.05) --
	(121.81, 32.05) --
	cycle;

\path[draw=drawColor,line width= 0.4pt,line join=round,line cap=round,fill=fillColor] ( 43.54,103.04) circle (  1.75);

\path[draw=drawColor,line width= 0.4pt,line join=round,line cap=round,fill=fillColor] ( 52.50, 80.50) circle (  1.75);

\path[draw=drawColor,line width= 0.4pt,line join=round,line cap=round,fill=fillColor] ( 61.46, 45.40) circle (  1.75);

\path[draw=drawColor,line width= 0.4pt,line join=round,line cap=round,fill=fillColor] ( 70.42, 40.05) circle (  1.75);

\path[draw=drawColor,line width= 0.4pt,line join=round,line cap=round,fill=fillColor] ( 79.37, 36.62) circle (  1.75);

\path[draw=drawColor,line width= 0.4pt,line join=round,line cap=round,fill=fillColor] ( 88.33, 34.95) circle (  1.75);

\path[draw=drawColor,line width= 0.4pt,line join=round,line cap=round,fill=fillColor] ( 97.29, 34.35) circle (  1.75);

\path[draw=drawColor,line width= 0.4pt,line join=round,line cap=round,fill=fillColor] (106.25, 34.22) circle (  1.75);

\path[draw=drawColor,line width= 0.4pt,line join=round,line cap=round,fill=fillColor] (115.21, 34.24) circle (  1.75);

\path[draw=drawColor,line width= 0.4pt,line join=round,line cap=round,fill=fillColor] (124.17, 34.29) circle (  1.75);
\end{scope}
\begin{scope}
\path[clip] (  0.00,  0.00) rectangle (133.70,112.02);
\definecolor{drawColor}{gray}{0.10}

\node[text=drawColor,anchor=base east,inner sep=0pt, outer sep=0pt, scale=  0.73] at ( 34.56, 92.11) {1E-1};

\node[text=drawColor,anchor=base east,inner sep=0pt, outer sep=0pt, scale=  0.73] at ( 34.56, 65.82) {1E-2};

\node[text=drawColor,anchor=base east,inner sep=0pt, outer sep=0pt, scale=  0.73] at ( 34.56, 31.61) {5E-4};
\end{scope}
\begin{scope}
\path[clip] (  0.00,  0.00) rectangle (133.70,112.02);
\definecolor{drawColor}{gray}{0.20}

\path[draw=drawColor,line width= 0.6pt,line join=round] ( 36.76, 95.14) --
	( 39.51, 95.14);

\path[draw=drawColor,line width= 0.6pt,line join=round] ( 36.76, 68.85) --
	( 39.51, 68.85);

\path[draw=drawColor,line width= 0.6pt,line join=round] ( 36.76, 34.64) --
	( 39.51, 34.64);
\end{scope}
\begin{scope}
\path[clip] (  0.00,  0.00) rectangle (133.70,112.02);
\definecolor{drawColor}{gray}{0.20}

\path[draw=drawColor,line width= 0.6pt,line join=round] ( 52.50, 27.18) --
	( 52.50, 29.93);

\path[draw=drawColor,line width= 0.6pt,line join=round] ( 70.42, 27.18) --
	( 70.42, 29.93);

\path[draw=drawColor,line width= 0.6pt,line join=round] ( 88.33, 27.18) --
	( 88.33, 29.93);

\path[draw=drawColor,line width= 0.6pt,line join=round] (106.25, 27.18) --
	(106.25, 29.93);

\path[draw=drawColor,line width= 0.6pt,line join=round] (124.17, 27.18) --
	(124.17, 29.93);
\end{scope}
\begin{scope}
\path[clip] (  0.00,  0.00) rectangle (133.70,112.02);
\definecolor{drawColor}{gray}{0.10}

\node[text=drawColor,anchor=base,inner sep=0pt, outer sep=0pt, scale=  0.73] at ( 52.50, 18.92) {$2$};

\node[text=drawColor,anchor=base,inner sep=0pt, outer sep=0pt, scale=  0.73] at ( 70.42, 18.92) {$4$};

\node[text=drawColor,anchor=base,inner sep=0pt, outer sep=0pt, scale=  0.73] at ( 88.33, 18.92) {$6$};

\node[text=drawColor,anchor=base,inner sep=0pt, outer sep=0pt, scale=  0.73] at (106.25, 18.92) {$8$};

\node[text=drawColor,anchor=base,inner sep=0pt, outer sep=0pt, scale=  0.73] at (124.17, 18.92) {$10$};
\end{scope}
\begin{scope}
\path[clip] (  0.00,  0.00) rectangle (133.70,112.02);
\definecolor{drawColor}{gray}{0.10}

\node[text=drawColor,anchor=base,inner sep=0pt, outer sep=0pt, scale=  0.85] at ( 83.85,  7.47) {Number of Iterations};
\end{scope}
\begin{scope}
\path[clip] (  0.00,  0.00) rectangle (133.70,112.02);
\definecolor{drawColor}{gray}{0.10}

\node[text=drawColor,rotate= 90.00,anchor=base,inner sep=0pt, outer sep=0pt, scale=  0.85] at ( 12.49, 68.23) {Relative Error};
\end{scope}
\begin{scope}
\path[clip] (  0.00,  0.00) rectangle (133.70,112.02);
\definecolor{drawColor}{RGB}{0,0,0}
\definecolor{fillColor}{RGB}{255,0,0}

\path[draw=drawColor,line width= 0.4pt,line join=round,line cap=round,fill=fillColor] ( 87.62, 88.06) rectangle ( 90.72, 91.16);
\end{scope}
\begin{scope}
\path[clip] (  0.00,  0.00) rectangle (133.70,112.02);
\definecolor{drawColor}{RGB}{0,0,0}
\definecolor{fillColor}{RGB}{255,0,0}

\path[draw=drawColor,line width= 0.4pt,line join=round,line cap=round,fill=fillColor] ( 89.17, 85.04) --
	( 91.53, 80.97) --
	( 86.82, 80.97) --
	cycle;
\end{scope}
\begin{scope}
\path[clip] (  0.00,  0.00) rectangle (133.70,112.02);
\definecolor{drawColor}{RGB}{0,0,0}
\definecolor{fillColor}{RGB}{255,0,0}

\path[draw=drawColor,line width= 0.4pt,line join=round,line cap=round,fill=fillColor] ( 89.17, 75.04) circle (  1.75);
\end{scope}
\begin{scope}
\path[clip] (  0.00,  0.00) rectangle (133.70,112.02);
\definecolor{drawColor}{RGB}{0,0,0}

\node[text=drawColor,anchor=base west,inner sep=0pt, outer sep=0pt, scale=  0.69] at ( 93.62, 86.77) {$\texttt{IRLS}_{1}$};
\end{scope}
\begin{scope}
\path[clip] (  0.00,  0.00) rectangle (133.70,112.02);
\definecolor{drawColor}{RGB}{0,0,0}

\node[text=drawColor,anchor=base west,inner sep=0pt, outer sep=0pt, scale=  0.69] at ( 93.62, 79.48) {$\texttt{IRLS}_{0.5}$};
\end{scope}
\begin{scope}
\path[clip] (  0.00,  0.00) rectangle (133.70,112.02);
\definecolor{drawColor}{RGB}{0,0,0}

\node[text=drawColor,anchor=base west,inner sep=0pt, outer sep=0pt, scale=  0.69] at ( 93.62, 72.20) {$\texttt{IRLS}_{0.1}$};
\end{scope}
\end{tikzpicture} \label{fig:LpRR_noisy} } 
	\caption{Empirical convergence rates of $\texttt{IRLS}_p$ for different values of $p$ ($m=1000,n=10,k=200$, averaged over $20$ trials). Figure \ref{fig:LpRR_noiseless}: $\bA \bx^* - \by$ is exactly $k$-sparse. Figure \ref{fig:LpRR_noisy}: $\bA \bx^* - \by$ is approximately $k$-sparse with $1\%$ noise (see Section \ref{subsection:data-generation} for data generation). \label{fig:LpRR_noiseless_noisy}  }
\end{figure}

\section{Discussion and Future Work}
In this work, we established novel global linear and local superlinear rates of $\texttt{IRLS}_p$ for $\ell_p$-regression \eqref{eq:Lp} under the assumption of the stable range space property. Furthermore, we explored several applications for which $\texttt{IRLS}_p$ exhibits state-of-the-art results. 

There are several directions that deserve further investigation. Theoretically, the Gaussian assumption of Proposition \ref{prop:Gaussian-RSP} might be weakened; similar results would hold for sub-Gaussian distributions \cite{LecueMendelson-EMS17,DirksenLecueRauhut-IEEEIT2017}. Note also that, in experiments (Figures \ref{fig:RR1} and \ref{fig:application}), $\texttt{IRLS}_p$ works well even when $m-n\to 2k$. This leaves the question of whether the constant or logarithmic factors of condition \eqref{eq:condition-Gaussian-RSP} are too stringent for guaranteeing the stable RSP to hold; can this be improved? Finally, it is tempting to carry out a global rate analysis of $\texttt{IRLS}_p$ with $p\in(0,1)$ for $\ell_p$-regression; can one prove global (linear) rates under the stable RSP or other assumption?

Algorithmically, designing inexact solvers for the inner (weighted) least-squares problem \eqref{eq:x-update} based on Krylov-subspace methods \cite{Fornasier-COA2016,Gazzola-SIAM-J-MA2020} is expected to further accelerate our current IRLS implementation (our current implementation is attached in Appendix \ref{appendix:impl}). Also, our empirical experiments suggest that $\texttt{IRLS}_p$ (in particular $p\in(0,1)$) is worth being adopted and applied to many other outlier-robust estimation tasks beyond robust regression  \cite{Tsakiris-JMLR2018,Aftab-WCACV2015,Dong-PRL2019,Iwata-ECCV2020,Ding-CVPR2020,Yang-RA-L2020,Chatterjee-TPAMI2017, Sidhartha-3DV2021,Peng-CVPR2022,Lee-CVPR2022,Peng-ECCV2022,Carlone-arXiv2022}.

\begin{ack}
% The first author was supported by the MINDS PhD fellowship at Johns Hopkins University. 
This work was supported by grants NSF 1704458, NSF 1934979, NSF-IIS-1837991, ONR MURI 503405-78051, 
and the NSF-Simons Collaboration on the Mathematical Foundations of Deep Learning (NSF grant 2031985).
\end{ack}

%\newpage

{
\small 

\bibliographystyle{ieeetr}
\bibliography{Liangzu}
}

%
%
%\section*{References}
%
%
%References follow the acknowledgments. Use unnumbered first-level heading for
%the references. Any choice of citation style is acceptable as long as you are
%consistent. It is permissible to reduce the font size to \verb+small+ (9 point)
%when listing the references.
%Note that the Reference section does not count towards the page limit.
%\medskip
%
%
%{
%\small
%
%
%[1] Alexander, J.A.\ \& Mozer, M.C.\ (1995) Template-based algorithms for
%connectionist rule extraction. In G.\ Tesauro, D.S.\ Touretzky and T.K.\ Leen
%(eds.), {\it Advances in Neural Information Processing Systems 7},
%pp.\ 609--616. Cambridge, MA: MIT Press.
%
%
%[2] Bower, J.M.\ \& Beeman, D.\ (1995) {\it The Book of GENESIS: Exploring
%  Realistic Neural Models with the GEneral NEural SImulation System.}  New York:
%TELOS/Springer--Verlag.
%
%
%[3] Hasselmo, M.E., Schnell, E.\ \& Barkai, E.\ (1995) Dynamics of learning and
%recall at excitatory recurrent synapses and cholinergic modulation in rat
%hippocampal region CA3. {\it Journal of Neuroscience} {\bf 15}(7):5249-5262.
%}

%%%%%%%%%%%%%%%%%%%%%%%%%%%%%%%%%%%%%%%%%%%%%%%%%%%%%%%%%%%%
\section*{Checklist}

%%% BEGIN INSTRUCTIONS %%%
%The checklist follows the references.  Please
%read the checklist guidelines carefully for information on how to answer these
%questions.  For each question, change the default \answerTODO{} to \answerYes{},
%\answerNo{}, or \answerNA{}.  You are strongly encouraged to include a {\bf
%justification to your answer}, either by referencing the appropriate section of
%your paper or providing a brief inline description.  For example:
%\begin{itemize}
%  \item Did you include the license to the code and datasets? \answerYes{}
%  \item Did you include the license to the code and datasets? \answerNo{The code and the data are proprietary.}
%  \item Did you include the license to the code and datasets? \answerNA{}
%\end{itemize}
%Please do not modify the questions and only use the provided macros for your
%answers.  Note that the Checklist section does not count towards the page
%limit.  In your paper, please delete this instructions block and only keep the
%Checklist section heading above along with the questions/answers below.
%%% END INSTRUCTIONS %%%

\begin{enumerate}

\item For all authors...
\begin{enumerate}
  \item Do the main claims made in the abstract and introduction accurately reflect the paper's contributions and scope?
    \answerYes{Furthermore, we made a lot of efforts in comparing the theoretical results of prior works and position our contribution in the literature.}
  \item Did you describe the limitations of your work?
    \answerYes{See for example at Lines 268-274 on real phase retrieval. We also discussed several directions for improvements or future research at the end of the paper.}
  \item Did you discuss any potential negative societal impacts of your work?
    \answerNo{}
  \item Have you read the ethics review guidelines and ensured that your paper conforms to them?
    \answerYes{}
\end{enumerate}

\item If you are including theoretical results...
\begin{enumerate}
  \item Did you state the full set of assumptions of all theoretical results?
    \answerYes{In particular, we used an entire subsection (Section \ref{subsection:RSP}) to justify the major assumption, the stable range space property.}
        \item Did you include complete proofs of all theoretical results?
    \answerYes{We omitted the proofs of Proposition \ref{prop:LpRR=LpBP} and \ref{prop:RSP=exact-recovery} as they follow directly from existing results; see Line 83 and Line 155. All other theoretical results are proved in the appendix.}
\end{enumerate}

\item If you ran experiments...
\begin{enumerate}
  \item Did you include the code, data, and instructions needed to reproduce the main experimental results (either in the supplemental material or as a URL)?
    \answerYes{We attached a copy of our code in the supplemental material.}
  \item Did you specify all the training details (e.g., data splits, hyperparameters, how they were chosen)?
    \answerYes{In the appendix, we use an entire section to describe our experimental setup. We did not do that in the main paper in interest of space.}
        \item Did you report error bars (e.g., with respect to the random seed after running experiments multiple times)?
    \answerNo{We did not report error bars for two reasons. The main reason is that some of our figures are quite dense, and adding error bars on them makes it harder to read. A slightly subjective reason is that we believe our current figures are informative enough, correctly deliver the message, and are fair for other methods. }
        \item Did you include the total amount of compute and the type of resources used (e.g., type of GPUs, internal cluster, or cloud provider)?
    \answerNA{}
\end{enumerate}

\item If you are using existing assets (e.g., code, data, models) or curating/releasing new assets...
\begin{enumerate}
  \item If your work uses existing assets, did you cite the creators?
    \answerYes{}
  \item Did you mention the license of the assets?
    \answerNo{In the main paper we used the \hyperlink{http://vision.ucsd.edu/~leekc/ExtYaleDatabase/ExtYaleB.html}{Extended Yale B face dataset}, which is free for research purposes.}
  \item Did you include any new assets either in the supplemental material or as a URL?
    \answerNo{}
  \item Did you discuss whether and how consent was obtained from people whose data you're using/curating?
    \answerYes{}
  \item Did you discuss whether the data you are using/curating contains personally identifiable information or offensive content?
    \answerNo{}
\end{enumerate}

\item If you used crowdsourcing or conducted research with human subjects...
\begin{enumerate}
  \item Did you include the full text of instructions given to participants and screenshots, if applicable?
    \answerNA{}
  \item Did you describe any potential participant risks, with links to Institutional Review Board (IRB) approvals, if applicable?
    \answerNA{}
  \item Did you include the estimated hourly wage paid to participants and the total amount spent on participant compensation?
    \answerNA{}
\end{enumerate}

\end{enumerate}

%%%%%%%%%%%%%%%%%%%%%%%%%%%%%%%%%%%%%%%%%%%%%%%%%%%%%%%%%%%%

\newpage

\appendix

\section{IRLS as Smoothing Method Minimizing Quadratic Models}
In this section, we clarify the precise relationship between the steps of $\texttt{IRLS}_p$ as described in Algorithm \ref{algo:IRLS} and the $\ell_p$-objective on the residual, cf. \eqref{eq:Lp}.

\subsection{$\texttt{IRLS}_p$ as Lp-Regression for $0 < p \leq 1$} \label{sec:IRLSp:MM}
In Section \ref{subsection:IRLS-basic}, we shed light on the different rules for choosing the \emph{smoothing parameter} $\epsilon^{(t+1)}$ in Algorithm \ref{algo:IRLS}. In fact, this terminology is used as there is an intimate relationship between the least squares step \eqref{eq:x-update} of $\texttt{IRLS}_p$ and a \emph{smoothed} $\ell_p$-objective $H_{\epsilon}(\cdot)$ which is, for given $\epsilon > 0$, defined as
\begin{equation} \label{eq:Heps:def}
 		H_{\epsilon}(\br) = \sum_{i=1}^m h_{\epsilon}(r_i)
\end{equation}
where $h_{\epsilon}: \bbR \to \bbR$ is a symmetric smoothed one-dimensional function such that
\[
h_{\epsilon}(r) = \begin{cases}
		\frac{1}{p} |r|^p & |r| > \epsilon \\ 
		\frac{1}{2}  \frac{r^2}{\epsilon^{2-p}} + \Big(\frac{1}{p}- \frac{1}{2}  \Big) \epsilon^p & |r| \leq \epsilon
	\end{cases},
\]
which makes $H_{\epsilon}$ a function akin to a scaled $\ell_p$-quasinorm, but which is quadratic around $0$. This definition corresponds to a scaled Huber loss in the case of $p=1$ \cite{Huber-1964}, but extends to the non-convex case if $0 < p < 1$. 

By considering the derivative of $\epsilon \to h_{\epsilon}(r)$ for fixed $r$, it is easy to see that $h_{\epsilon}(r)$, and therefore also $H_{\epsilon}(\br)$, is non-decreasing in $\epsilon$, which implies that
\begin{equation} \label{eq:res:majorization}
\frac{1}{p}  \norm{\bA \bx - \by}{p}^p  = \lim_{\epsilon' \to 0} H_{\epsilon'}(\bA \bx- \by) \leq H_{\epsilon}(\bA \bx- \by) 
\end{equation}
for any $\epsilon > 0$. We note that $H_{\epsilon}(\cdot)$ is continuously differentiable, so that the function $\bbR^{m} \times \bbR_{>0} \to \bbR, (\br , \epsilon) \mapsto H_{\epsilon}(\br) $ can be considered as a \emph{smoothing function} as defined in the non-smooth optimization literature \cite{Chen-MP2012,Nesterov-FoCM2017}. 

In this context, one interpretation of $\texttt{IRLS}_p$ (and of other IRLS methods such as \cite{Beck-JOTA2015,Daubechies-CPAM2010,Mukhoty-AISTATS2019,Kummerle-NeurIPS2021}, with possibly different smoothing functions) is to consider it as a \emph{smoothing method for $\ell_p$-regression with quadratic majorizing models}. Indeed, defining the function $Q_{\epsilon}: \bbR^{m} \times \bbR^{m}  \to \bbR$,
\[
Q_{\epsilon}(\bv, \br) = \sum_{i=1}^{m} q_{\epsilon}(v_i, r_i), \text{ where } q_{\epsilon}(v, r) = h_{\epsilon}(r) + \frac{1}{2}\cdot \frac{v^2 - r^2 }{\max\{ |r|, \epsilon \}^{2-p} },
\]
which is quadratic in $\br$, we observe that the least squares step \eqref{eq:x-update} of $\texttt{IRLS}_p$ satisfies
\begin{align}\label{eq:x-update-Q}
    \bx^{(t+1)} =  \argmin_{\bx\in\bbR^n} Q_{\epsilon} (\bA \bx-\by, \br^{(t)})
\end{align}
where $\br^{(t)} = \bA\bx^{(t)} -\by$. Since $Q_{\epsilon} (\cdot, \br)$ locally coincides with the smoothed function $H_{\epsilon}(\cdot)$ so that
\[
 H_{\epsilon}(\br) = Q_{\epsilon} (\br ,\br) 
\]
for each $\br \in \bbR^m$, and furthermore, since it is not hard to show that $Q_{\epsilon} (\cdot, \br)$ majorizes $H_{\epsilon}(\cdot)$ in the sense that
\begin{equation} \label{eq:majorization:Heps}
H_{\epsilon}(\bv) \leq Q_{\epsilon} (\bv, \br)
\end{equation}
for any $\bv, \br \in \bbR^{m}$ and $\epsilon >0$, it is possible to establish the monotonous decrease of the smoothed $\ell_p$-objective at each iteration of $\texttt{IRLS}_p$ without too much additional work: For each $t = 1,2,\ldots$ we obtain that
	\begin{align}\label{eq:chain}
		H_{\epsilon^{(t+1)}}\big(\br^{(t+1)}\big) \leq H_{\epsilon^{(t)}}\big(\br^{(t+1)}\big) \leq Q_{\epsilon^{(t)}}\big(\br^{(t+1)}, \br^{(t)}\big)\leq Q_{\epsilon^{(t)}}\big(\br^{(t)}, \br^{(t)}\big) = H_{\epsilon^{(t)}}\big(\br^{(t)}\big),
	\end{align}
where the first inequality holds as long as $\epsilon^{(t+1)} \leq \epsilon^{(t)}$, which is satisfied for the smoothing parameter update rules described in Section \ref{subsection:IRLS-basic}.

The monotonicity outlined in \eqref{eq:chain} enables the following interpretation of the iterates $\{\bx^{(t)}\}_{t\geq 1}$ of $\texttt{IRLS}_p$: Recalling the definition $\br^{(t)} = \bA\bx^{(t)} -\by$, it follows that without any assumption---in particular, without assuming any range space property on the feature matrix $\bA$---each accumulation point of $\{\br^{(t)}\}_{t \geq 1}$ is a first-order stationary point of the $\overline{\epsilon}$-smoothed $\ell_p$-objective $H_{\overline{\epsilon}}(\cdot)$ of \eqref{eq:Heps:def} if $\overline{\epsilon}$ is defined as $\overline{\epsilon} = \lim_{t\to \infty} \epsilon^{(t)}$ (see, e.g., \cite[Theorem 5.3]{Daubechies-CPAM2010} for a similar result).

We conclude this section by noting that $\texttt{IRLS}_p$ can be also interpreted within the framework of \emph{majorization-minimization (MM)} \cite{Lange-MM2016,SunBabuPalomar-IEEESP2017} algorithms---albeit, with the crucial difference that unlike for MM methods, the smoothing updates of $\texttt{IRLS}_p$ as described in Algorithm \ref{algo:IRLS} change the underlying objective $H_{\epsilon^{(t)}}(\cdot)$ every time when $\epsilon^{(t)}$ changes, i.e., in many cases at each iteration.

\subsection{$\texttt{IRLS}_p$ as Sum-of-Logarithm Minimization for $p=0$} \label{sec:IRLS0:MM}
The considerations of Section \ref{sec:IRLSp:MM} are tailored for $\texttt{IRLS}_p$ with $0 < p \leq 1$. However, it is valid to use $\texttt{IRLS}_p$ with $p=0$ by setting the weights $w_i^{(t+1)}$ accordingly in \eqref{eq:w-update}. In this case, it is still possible to interpret $\texttt{IRLS}_0$ as a smoothing method with respect to an underlying smoothed surrogate objective. 

In particular, we then define $H_{\epsilon}:\bbR^{m} \to \bbR$ as in \eqref{eq:Heps:def} with
\[
	h_{\epsilon}(r) = 
		\begin{cases}
 	\log(|r|), & \text{ if } |r| > \epsilon, \\
 	\frac{1}{2} \frac{r^2}{\epsilon^2} + \log(\epsilon) - \frac{1}{2}, & \text{ if } |r| \leq \epsilon,
 \end{cases}
\]
i.e., a smoothed \emph{sum-of-logarithm} objective \cite{FazelHindiBoyd-ACC2003,CandesWakinBoyd-JFA2008,OchsDBP-SIAMIMS2015}. As the inequalities \eqref{eq:majorization:Heps} and \eqref{eq:chain} still hold in this case, we conclude that $\texttt{IRLS}_0$ can be interpreted as a smoothing method for sum-of-logarithm minimization on the residuals $r = \bA\bx -\by$. We note that while there is a close relationship between their minimizers, a pointwise majorization of an $\ell_0$-objective $\norm{\bA\bx -\by}{0}$ by the smoothed objective $H_{\epsilon}(\cdot)$ as in \eqref{eq:res:majorization} is \emph{not} possible for $p=0$. 

Another minor difference to the case $0 < p \leq 1$ is that for $p=0$, the smoothed objective $H_{\epsilon^{(t)}}(\br^{(t)})$ of $\texttt{IRLS}_0$ residuals $\br^{(t)}$ might converge to $-\infty$ as $\epsilon \to 0$; however, this is rather a technicality than a deterrence for the numerical performance of the algorithm, which remains excellent. 

On the other hand, the local convergence analysis put forward in Section \ref{subsection:LSC-noiseless} and Theorem \ref{theorem:LSC-noiseless} still applies: In particular, we obtain that $\texttt{IRLS}_0$ as described in Algorithm \ref{algo:IRLS} exhibits \emph{local quadratic convergence} under appropriate assumptions on the feature matrix, cf. Theorem \ref{theorem:LSC-noiseless}.

\section{Proofs of Main Results}

\subsection{Global Linear Convergence for L1-Regression}\label{subsection:appendix-GLC}
% Before proving Theorem \ref{theorem:GLC-L1}, we need a few notations. Define the (translated and rescaled) Huber loss:
% \begin{align}\label{eq:scaled-Huber}
% 	H_{\epsilon}(\br) = \sum_{i=1}^m h_{\epsilon}(r_i), \ \ \ h_{\epsilon}(r) = \begin{cases}
% 		|r| & |r| > \epsilon \\ 
% 		\frac{1}{2} \big( \frac{r^2}{\epsilon} + \epsilon \big) & |r| \leq \epsilon
% 	\end{cases}
% \end{align}
% Clearly the Huber loss majorizes the $\ell_1$ loss for any $\epsilon$, that is, $H_{\epsilon}(\br)\geq \norm{\br}{1}$ for any $\br$ and $\epsilon>0$. Moreover, one verifies that $H_{\epsilon}(\cdot)$ is an increasing function in $\epsilon$, that is $H_{\epsilon^{(t+1)}}(\br)\leq H_{\epsilon^{(t)}}(\br)$ whenever $\epsilon^{(t+1)}\leq \epsilon^{(t)}$ for any $\br\in\bbR^m$. One furthermore verifies that the following function majorizes $H_\epsilon(\cdot)$:
% \begin{align}\label{eq:Q1}
% 	Q_{\epsilon}(\bv, \br) = \sum_{i=1}^{m} q_{\epsilon}(v_i, r_i), \ \ \ q_{\epsilon}(v, r) = h_{\epsilon}(r) + \frac{1}{2}\cdot \frac{v^2 - r^2 }{\max\{ |r|, \epsilon \} }
% \end{align}
% In particular, by definition we have $Q_{\epsilon}(\bv,\br)\geq H_{\epsilon}(\br)$ for any $\bv$ and $\br$, with the equality attained when $\bv=\br$. Finally, note that $Q_{\epsilon}(\bv, \br)$ is quadratic in $\bv$, and that for any $t\geq 1$ we can write \eqref{eq:x-update} as
% \begin{align}\label{eq:x-update-Q}
% 	\bx^{(t+1)} &\gets \argmin_{\bx\in\bbR^n} Q_{\epsilon^{(t)}} (\bA \bx-\by, \bA\bx^{(t)} -\by  ).
% \end{align}

% With the above background, we now proceed for proving Theorem \ref{theorem:GLC-L1}.

\begin{proof}[Proof of Theorem \ref{theorem:GLC-L1}]
	Define $\br^*:= \bA \bx^* - \by$, $\br^{(t)}:= \bA \bx^{(t)} - \by$, and $\bd^{(t)}:=\br^{(t)} - \br^*=\bA\bx^{(t)} - \bA \bx^*$. Recalling \eqref{eq:chain}, for any $s>0$ we have the following chain of inequalities:
	\begin{align} \label{eq:chain2}
		H_{\epsilon^{(t+1)}}\big(\br^{(t+1)}\big) \leq H_{\epsilon^{(t)}}\big(\br^{(t+1)}\big) \leq Q_{\epsilon^{(t)}}\big(\br^{(t+1)}, \br^{(t)}\big)\leq Q_{\epsilon^{(t)}}\big(\br^{(t)} - s\bd^{(t)}, \br^{(t)}\big)
	\end{align}
	In \eqref{eq:chain2}, the first inequality holds because $\epsilon^{(t+1)}\leq \epsilon^{(t)}$, and $\epsilon \mapsto H_{\epsilon}(\cdot)$ is a non-decreasing function in $\epsilon$, the second inequality holds because $H_{\epsilon}(\cdot)$ is majorized by $Q_{\epsilon}(\cdot, \times)$, see also \eqref{eq:chain}. Furthermore, the third inequality holds due to the optimality of $\br^{(t+1)}$ or $\bx^{(t+1)}$ (cf. \eqref{eq:x-update} and \eqref{eq:x-update-Q}). From \eqref{eq:chain2} we obtain
	\begin{align}
		H_{\epsilon^{(t+1)}}\big(\br^{(t+1)}\big)  - H_{\epsilon^{(t)}}\big(\br^{(t)}\big) &\leq  Q_{\epsilon^{(t)}}\big(\br^{(t)} - s\bd^{(t)}, \br^{(t)}\big) - H_{\epsilon^{(t)}}\big(\br^{(t)}\big) \\
		&= \frac{1}{2} \sum_{i=1}^{m}  \frac{\big(r^{(t)}_i-sd^{(t)}_i\big)^2 - \big(r_i^{(t)}\big)^2 }{\max\{ \big|r_i^{(t)}\big|, \epsilon^{(t)} \} } \\
		&=s \sum_{i=1}^{m}  \frac{ -r^{(t)}_i \cdot d^{(t)}_i }{\max\big\{ \big|r_i^{(t)}\big|, \epsilon^{(t)} \big\} } + \frac{s^2}{2} \sum_{i=1}^{m}  \frac{\big(d_i^{(t)}\big)^2 }{\max\big\{ \big|r_i^{(t)}\big|, \epsilon^{(t)} \big\} }. \label{eq:t+t-squared}
	\end{align}
	We next derive upper bounds for the two sums of \eqref{eq:t+t-squared} respectively. Denote by $S^*$ the support of $\br^*$, and define $R:=\{ i: \big|r_i^{(t)}\big|>\epsilon\}\subset \{1,\dots,m\}$. Then the first sum of \eqref{eq:t+t-squared} is
	\begin{align}
		\sum_{i=1}^{m}  \frac{ -r^{(t)}_i \cdot d^{(t)}_i }{\max\big\{ \big|r_i^{(t)}\big|, \epsilon^{(t)} \big\} } &= \sum_{i\in S^*} \frac{-r^{(t)}_i}{\max\big\{ \big|r_i^{(t)}\big|, \epsilon^{(t)} \big\} } \cdot d^{(t)}_i + \sum_{i\in (S^*)^c}  \frac{-r^{(t)}_i \cdot r^{(t)}_i}{\max\big\{ \big|r_i^{(t)}\big|, \epsilon^{(t)} \big\}} \\
		&\leq \sum_{i\in S^*} \big|d^{(t)}_i \big| - \sum_{i\in (S^*)^c\cap R}  \frac{r^{(t)}_i \cdot r^{(t)}_i}{ \big|r_i^{(t)}\big|} - \sum_{i\in (S^*)^c\cap R^c}  \frac{r^{(t)}_i \cdot r^{(t)}_i}{\epsilon^{(t)}} \\
		&\leq \big(\eta - 1 \big)\sum_{i\in (S^*)^c} \big|d^{(t)}_i \big| + \sum_{i\in (S^*)^c\cap R^c}  \big| r^{(t)}_i \big| - \sum_{i\in (S^*)^c\cap R^c}  \frac{r^{(t)}_i \cdot r^{(t)}_i}{\epsilon^{(t)}}.
	\end{align}
	In the last inequality, we applied $(k,\eta)$-RSP as our assumption, and used the fact $d_i^{(t)}=r_i^{(t)}$ for $i\in(S^*)^c$. Since $\big| r^{(t)}_i \big|\leq r^{(t)}_i \cdot r^{(t)}_i/\epsilon^{(t)} + \epsilon^{(t)}/4$, from the above we now arrive at
	\begin{align}
		\sum_{i=1}^{m}  \frac{ -r^{(t)}_i \cdot d^{(t)}_i }{\max\big\{ \big|r_i^{(t)}\big|, \epsilon^{(t)} \big\} } &\leq  \big(\eta - 1 \big)\sum_{i\in (S^*)^c} \big|d^{(t)}_i \big| + \big|(S^*)^c\cap R^c\big| \cdot \frac{\epsilon^{(t)}}{4} \\
		&\leq  \big(\eta - 1 \big)\sum_{i\in (S^*)^c} \big|d^{(t)}_i \big| + \frac{m\cdot \epsilon^{(t)}}{4} \\
		&\leq \big(\eta - 1 \big)\sum_{i\in (S^*)^c} \big|d^{(t)}_i \big| + \frac{1}{4} \sum_{i\in (S^*)^c} \big|d^{(t)}_i \big| = \big(\eta - \frac{3}{4} \big)\sum_{i\in (S^*)^c} \big|d^{(t)}_i \big|.
	\end{align}
	In the last inequality we used the definitions of $d^{(t)}_i$ and $\epsilon^{(t)}$. Next, the second summation of \eqref{eq:t+t-squared} is
	\begin{align}
		\sum_{i=1}^{m}  \frac{\big(d_i^{(t)}\big)^2 }{\max\big\{ \big|r_i^{(t)}\big|, \epsilon^{(t)} \big\} } &\leq \max_i |d^{(t)}_i|\cdot  \frac{\sum_{i=1}^m |d^{(t)}_i|}{\epsilon^{(t)}} \\
		&\leq \eta(\eta+1) \sum_{i\in (S^*)^c} |d^{(t)}_i| \cdot  \frac{(\eta+1) \sum_{i\in (S^*)^c} |d^{(t)}_i|}{\epsilon^{(t)}} \\
		&\leq \frac{49}{16}\eta \frac{\Big(\sum_{i\in (S^*)^c} |d^{(t)}_i|\Big)^2}{\epsilon^{(t)}}. \label{eq:ub2}
	\end{align}
	In the above we used $(k,\eta)$-stable RSP multiple times. With the two upper bounds on the sums of \eqref{eq:t+t-squared}, and with $s':=s\big(\sum_{i\in (S^*)^c} |d^{(t)}_i|\big)$, we now obtain
	\begin{align}
		H_{\epsilon^{(t+1)}}\big(\br^{(t+1)}\big)  - H_{\epsilon^{(t)}}\big(\br^{(t)}\big) \leq s'(\eta - \frac{3}{4}) + \frac{(s')^2}{2}\cdot  \frac{49}{16} \cdot \frac{\eta}{\epsilon^{(t)}}.
	\end{align}
	Since $s$ is an arbitrary  non-negative integer, let $s$ be such that $s'=(12-16\eta)\epsilon^{(t)}/(49\eta)$. Then
	\begin{align}
		H_{\epsilon^{(t+1)}}\big(\br^{(t+1)}\big)  - H_{\epsilon^{(t)}}\big(\br^{(t)}\big) \leq - \frac{(3-4\eta)^2}{98\eta}\cdot \epsilon^{(t)}.
	\end{align}
	The definition of $\epsilon^{(t)}$ \eqref{eq:update-Kummerle} implies there exists some $t'\leq t$ such that $\epsilon^{(t)} = \sigma^{(t')}/m$. Substituting this value of $s'$ into the above inequality gives us the bound
	\begin{align}
		H_{\epsilon^{(t+1)}}\big(\br^{(t+1)}\big) - \norm{\br^*}{1}  &\leq H_{\epsilon^{(t)}}\big(\br^{(t)}\big) - \norm{\br^*}{1} - \frac{(3-4\eta)^2}{98\eta}\cdot \frac{\sigma^{(t')}}{m} \\
		&\leq H_{\epsilon^{(t)}}\big(\br^{(t)}\big) - \norm{\br^*}{1} - \frac{(3-4\eta)^2}{98\eta}\cdot \frac{H_{\epsilon^{(t')}}\big(\br^{(t')}\big) - \norm{\br^*}{1}}{3m} \\
		&\leq H_{\epsilon^{(t)}}\big(\br^{(t)}\big) - \norm{\br^*}{1} - \frac{(3-4\eta)^2}{98\eta}\cdot \frac{H_{\epsilon^{(t)}}\big(\br^{(t)}\big) - \norm{\br^*}{1}}{3m} \\
		&= \Big(1- \frac{(3-4\eta)^2}{294\eta m} \Big) \cdot \Big( H_{\epsilon^{(t)}}\big(\br^{(t)}\big) - \norm{\br^*}{1} \Big).
	\end{align}
	The last two inequalities above are due  to Lemma \ref{lemma:1} and the monotonicity of $H_{\epsilon^{(t)}}(\br^{(t)})$ in $t$. Finally, Note that $H_{\epsilon^{(t+1)}}\big(\br^{(t+1)}\big)$ majorizes $\norm{\br^{(t+1)}}{1}$ and Lemma \ref{lemma:1} implies
	\begin{align}
		H_{\epsilon^{(1)}}\big(\br^{(1)}\big) - \norm{\br^*}{1} \leq 3\sigma^{(1)}\leq 3 \norm{\br^{(1)}}{1}.
	\end{align}
	We have thus obtained
	\begin{align}
	 \norm{\br^{(t+1)}}{1} - \norm{\br^*}{1}\leq	H_{\epsilon^{(t+1)}}\big(\br^{(t+1)}\big) - \norm{\br^*}{1} \leq  \Big(1- \frac{(3-4\eta)^2}{294\eta m} \Big)^t \cdot 3 \norm{\br^{(1)}}{1}.
	\end{align}
	and the proof is now complete.
\end{proof}

\subsection{Local Superlinear Convergence for Lp-Regression}
\begin{proof}[Proof of Theorem \ref{theorem:LSC-noiseless}]
    We first show that the right inequality of \eqref{eq:Lp-rates}, $\mu\cdot \norm{\bA \bx^{(t)} - \bA \bx^* }{1}^{1-p} < 1$, holds true. With \eqref{eq:Lp-convergence-radius} and the definition of $\mu$, this inequality holds whenever we have
    \begin{align} 
        &\ \  2\eta (\eta+1)  (1-c)^{p-2} \cdot \min_{i\in S^*} |\trsp{\ba_i}\bx^* - y_i|^{p-1} \cdot \Big( c\cdot \min_{i\in S^*} |\trsp{\ba_i}\bx^* - y_i| \Big)^{1-p} < 1, \\ 
        \Leftrightarrow &\ \  2\eta (\eta+1)  (1-c)^{p-2} c^{1-p}<1,
    \end{align}
    which is true by the definition of $c$. This finishes proving the right inequality of \eqref{eq:Lp-rates}. Next, one easily verifies \eqref{eq:Lp-rates} implies \eqref{eq:Lp-rates2}. Hence, it remains to prove the left inequality of \eqref{eq:Lp-rates}.
	
	Recall $\br^*:= \bA \bx^* - \by$. Define $\br^{(t)}:= \bA \bx^{(t)} - \by$, and $\bd^{(t)}:=\br^{(t)} - \br^*=\bA\bx^{(t)} - \bA \bx^*$. And recall that $S^*$ denotes the support of the ground-truth residual $\br^*$. We have
	\begin{align}
		\sum_{i=1}^{m} \big(r^*_i+d^{(t+1)}_i\big)d^{(t+1)}_i w^{(t)}_i = 0,
	\end{align}
	a known property for the global minimizer $\bx^{(t+1)}$ of the weighted least-squares problem \eqref{eq:x-update}. So
	\begin{align}
		\sum_{i=1}^m\big(d^{(t+1)}_i\big)^2w^{(t)}_i &= -\sum_{i\in S^*} r^*_i d^{(t+1)}_i w^{(t)}_i \\
		&\leq \sum_{i\in S^*} |r^*_i| \cdot  \big|d^{(t+1)}_i\big| \max \big\{ |r^{(t)}_i|, \epsilon^{(t)} \big\}^{p-2} \\
		&\leq \sum_{i\in S^*} |r^*_i| \cdot \big|d^{(t+1)}_i\big| \cdot  |r^{(t)}_i|^{p-2} \\
		&= \sum_{i\in S^*} |r^*_i| \cdot \big|d^{(t+1)}_i\big| \cdot  |d^{(t)}_i + r^*_i|^{p-2} \\
		&\leq \sum_{i\in S^*} |r^*_i| \cdot \big|d^{(t+1)}_i\big| \cdot  |r^*_i|^{p-2} \cdot (1-c)^{p-2}.
	\end{align}
	In the last step, we used the fact $|d^{(t)}_i|\leq c\cdot |r^*_i|$ for all $i \in S^*$ \eqref{eq:Lp-convergence-radius}, which means $|d^{(t)}_i + r^*_i|\geq (1-c) |r^*_i|$. Thus, with the definition $C:=(1-c)^{p-2} \cdot \min_{i\in S^*} |\trsp{\ba_i}\bx^* - y_i|^{p-1}$, we get
	\begin{equation}
		\begin{split}
			\sum_{i=1}^m\big(d^{(t+1)}_i\big)^2w^{(t)}_i &\leq \sum_{i\in S^*}  \big|d^{(t+1)}_i\big| \cdot  |r^*_i|^{p-1} \cdot (1-c)^{p-2} \\
			&\leq C  \sum_{i\in S^*} \big|d^{(t+1)}_i\big|.
		\end{split}
	\end{equation}
	From the above and the $(k,\eta)$-stable RSP, we can now arrive at
	\begin{align}
		\sum_{i\in (S^*)^c}\big(d^{(t+1)}_i\big)^2w^{(t)}_i \leq \sum_{i=1}^m\big(d^{(t+1)}_i\big)^2w^{(t)}_i \leq C  \sum_{i\in S^*} \big|d^{(t+1)}_i\big| \leq \eta\cdot C  \sum_{i\in (S^*)^c} \big|d^{(t+1)}_i\big|.
	\end{align}
	Next, from the above and the Cauchy–Schwarz inequality, we get that
	\begin{equation}
		\begin{split}
			\sum_{i\in (S^*)^c} \big|d^{(t+1)}_i\big| &= \sum_{i\in (S^*)^c} \big|d^{(t+1)}_i\big| \cdot \big(w^{(t)}_i\big)^{\frac{1}{2}} \cdot \big(w^{(t)}_i\big)^{-\frac{1}{2}} \\
			&\leq \sqrt{ \sum_{i\in (S^*)^c}\big(d^{(t+1)}_i\big)^2w^{(t)}_i } \cdot \sqrt{\sum_{i\in (S^*)^c} \big(w^{(t)}_i\big)^{-1} } \\
			&\leq \sqrt{\eta\cdot C  \sum_{i\in (S^*)^c} \big|d^{(t+1)}_i\big|}  \cdot \sqrt{\sum_{i\in (S^*)^c} \big(w^{(t)}_i\big)^{-1} }.
		\end{split}
	\end{equation}
	We next suppose $d^{(t+1)}_i=r^{(t+1)}_{i}-r^*_{i}\neq 0$ for some $i\in (S^*)^c$, for otherwise $\br^{(t+1)}$ is $k$-sparse and thus $\bx^{(t+1)}=\bx^*$ by the uniqueness of the $k$-sparse residual. Then the above inequality implies
	\begin{equation}
		\begin{split} 
			\sum_{i\in (S^*)^c} \big|d^{(t+1)}_i\big| &\leq \eta\cdot C \sum_{i\in (S^*)^c} \big(w^{(t)}_i\big)^{-1} \\
			&= \eta\cdot C \cdot \sum_{i\in (S^*)^c}  \max\big\{ |r^{(t)}_i|, \epsilon^{(t)} \big\}^{2-p} \\
			&\leq \eta\cdot C \cdot \bigg(  m  \big(\epsilon^{(t)}\big)^{2-p} + \sum_{i\in (S^*)^c} \big|r^{(t)}_i \big|^{2-p} \bigg) \\
			&\leq \eta\cdot C \cdot \bigg(  m  \cdot \frac{\big(\sigma^{(t)}\big)^{2-p}}{m^{2-p}} + \sum_{i\in (S^*)^c} \big|d^{(t)}_i \big|^{2-p} \bigg) \\
			&\leq  \eta\cdot C \cdot \bigg(  \frac{1}{m^{1-p}}  \cdot \Big(  \sum_{i\in (S^*)^c} \big|d^{(t)}_i \big| \Big)^{2-p} + \Big( \sum_{i\in (S^*)^c} \big|d^{(t)}_i \big| \Big)^{2-p} \bigg) \\
			&\leq \eta\cdot C \cdot \Big( \frac{1}{m^{1-p}} + 1 \Big) \cdot \bigg( \sum_{i=1}^m \big|d^{(t)}_i \big| \bigg)^{2-p} \\
			&\leq 2\eta\cdot C \cdot \bigg( \sum_{i=1}^m \big|d^{(t)}_i \big| \bigg)^{2-p}.
		\end{split}
	\end{equation}
	In the above, we used the definitions of $\epsilon^{(t)}, \sigma^{(t)}, r_i^{(t)}, d_i^{(t)}$, and $S^*$. We have proved \eqref{eq:Lp-rates}  since $\sum_{i=1}^m \big|d^{(t+1)}_i\big| \leq (1+\eta) \sum_{i\in (S^*)^c}\big|d^{(t+1)}_i\big|$ and by the definition of $C$.
\end{proof}

\section{Auxiliary Theoretical Results}
\begin{lemma}[Upper Bound of The Residual]\label{lemma:1}
	Recall $\br^* = \bA \bx^* - \by$. Run $\texttt{IRLS}_1$ Algorithm \ref{algo:IRLS} with $p=1$ and the update rule \eqref{eq:update-Kummerle} for $\sigma^{(t)}$ and $\epsilon^{(t)}$, which yields the iterates $\{\bx^{(t)}\}_{t>0}$. If $A$ satisfies $(k,\eta)$-stable RSP, then for every $t\geq 1$ the residual $\br^{(t)}:= \bA \bx^{(t)} - \by$ satisfies
	\begin{align}
		H_{\epsilon^{(t)}}\big(\br^{(t)}\big) - \norm{\br^*}{1} \leq 3\sigma^{(t)},
	\end{align}
	where $H_{\epsilon^{(t)}}\big(\br^{(t)}\big)$ is the smoothed $\ell_p$-objective defined in \eqref{eq:Heps:def} with $p=1$.
\end{lemma}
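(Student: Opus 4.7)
The plan is to split the bound into two pieces: a \emph{smoothing gap} $H_{\epsilon^{(t)}}(\br^{(t)}) - \|\br^{(t)}\|_1$ and an \emph{$\ell_1$-deficit} $\|\br^{(t)}\|_1 - \|\br^*\|_1$, and control each using the update rule \eqref{eq:update-Kummerle} and the $(1,k,\eta)$-stable RSP respectively.

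First, I would verify pointwise that $0 \le h_{\epsilon}(r) - |r| \le \epsilon/2$ for all $r$: indeed on $|r| \le \epsilon$ a direct computation gives $h_\epsilon(r) - |r| = (|r|-\epsilon)^2/(2\epsilon)$, which is maximized at $r=0$. Summing yields $H_{\epsilon^{(t)}}(\br^{(t)}) - \|\br^{(t)}\|_1 \le m\epsilon^{(t)}/2$. Since the update rule \eqref{eq:update-Kummerle} with $\alpha=k$ enforces $\epsilon^{(t)} \le \sigma^{(t)}/m$, the smoothing gap is at most $\sigma^{(t)}/2$, reducing the claim to $\|\br^{(t)}\|_1 - \|\br^*\|_1 \le \tfrac{5}{2}\sigma^{(t)}$.

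For the $\ell_1$-deficit, writing $\bd^{(t)} = \br^{(t)} - \br^* = \bA(\bx^{(t)}-\bx^*) \in \mathrm{Range}(\bA)$, the triangle inequality gives $\|\br^{(t)}\|_1 - \|\br^*\|_1 \le \|\bd^{(t)}\|_1$. Applying the stable RSP to the support $S^*$ of $\br^*$ (of size $\le k$) yields $\|\bd^{(t)}\|_1 \le (1+\eta)\sum_{i \notin S^*}|d^{(t)}_i|$. To connect the tail sum to $\sigma^{(t)}$, I would let $T^{(t)}$ be the top-$k$ support of $\br^{(t)}$ so that $\sigma^{(t)} = \sum_{i \notin T^{(t)}}|r^{(t)}_i|$, and split $(S^*)^c = B \sqcup D$ with $B = T^{(t)} \cap (S^*)^c$ and $D = (T^{(t)})^c \cap (S^*)^c$. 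On $B \cup D$ one has $|d^{(t)}_i| = |r^{(t)}_i|$, so $\sum_{i \in D}|d^{(t)}_i| \le \sigma^{(t)}$ by definition, while applying the stable RSP a second time to $B$ (of cardinality $\le |T^{(t)}| \le k$) gives $\sum_{i \in B}|d^{(t)}_i| \le \tfrac{\eta}{1+\eta}\|\bd^{(t)}\|_1 \le \eta\sum_{i \notin S^*}|d^{(t)}_i|$. Rearranging yields $\sum_{i \notin S^*}|d^{(t)}_i| \le \sigma^{(t)}/(1-\eta)$ and hence $\|\br^{(t)}\|_1 - \|\br^*\|_1 \le \tfrac{1+\eta}{1-\eta}\sigma^{(t)}$.

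Adding the two contributions gives $H_{\epsilon^{(t)}}(\br^{(t)}) - \|\br^*\|_1 \le \bigl(\tfrac{1+\eta}{1-\eta} + \tfrac{1}{2}\bigr)\sigma^{(t)}$. The main obstacle is getting the uniform constant $3$ on the entire range of $\eta$ used by Theorem~\ref{theorem:GLC-L1}: the estimate above already yields the $3\sigma^{(t)}$ bound for moderate $\eta$, and I expect that the remaining cases are handled by exploiting that the pointwise smoothing gap $(|r^{(t)}_i|-\epsilon^{(t)})^2/(2\epsilon^{(t)})$ is nonzero only at \emph{small-residual} coordinates (in particular it vanishes on the outlier indices in $S^*$ once $|r^{(t)}_i| \ge \epsilon^{(t)}$), combined with a more careful accounting of the $S^*$ versus $(S^*)^c$ contributions in $\|\bd^{(t)}\|_1$ as in \cite{Kummerle-NeurIPS2021}, so that the two parts of the bound can share rather than add their respective factors of $\sigma^{(t)}$.
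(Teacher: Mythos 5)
Your decomposition into a smoothing gap plus an $\ell_1$-deficit is sound, and the first half is fine (your bound $m\epsilon^{(t)}/2\le\sigma^{(t)}/2$ is even slightly sharper than what the paper uses). The genuine gap is in the second half: your estimate of the deficit is $\norm{\br^{(t)}}{1}-\norm{\br^*}{1}\le\frac{1+\eta}{1-\eta}\,\sigma^{(t)}$, so your total constant is $\frac{1+\eta}{1-\eta}+\frac{1}{2}$, which exceeds $3$ as soon as $\eta>3/7$. The lemma is invoked in Theorem \ref{theorem:GLC-L1} for every $\eta<3/4$ (and is stated for any $\eta\in(0,1)$ for which the RSP holds), so the regime you leave open is precisely the one that matters; at $\eta=3/4$ your bound is $7.5\,\sigma^{(t)}$. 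Your proposed remedy does not close this: the $\frac{1}{1-\eta}$ blow-up originates in the deficit estimate, not in double counting between the two pieces, so sharpening the smoothing gap (which only contributes $\tfrac12\sigma^{(t)}$) cannot help, and bounding the deficit via $\norm{\bd^{(t)}}{1}\le C(\eta)\,\sigma^{(t)}$ alone is essentially an NSP-type recovery estimate whose constant inevitably degrades as $\eta\to1$.

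The paper removes the $\frac{1}{1-\eta}$ factor by an absorption (self-bounding) argument carried out over the top-$k$ support $S^{(t)}$ of the \emph{current} residual $\br^{(t)}$ rather than over $S^*$. Writing $X:=\norm{\br^{(t)}}{1}-\norm{\br^*}{1}$, one bounds $\sum_{i\in(S^{(t)})^c}|d_i^{(t)}|\le\sigma^{(t)}+\sum_{i\in(S^{(t)})^c}|r_i^*|$ and, instead of discarding the second term, compares $\sum_{i\in S^{(t)}}|r_i^*|$ with $\sum_{i\in S^{(t)}}|r_i^{(t)}|=\norm{\br^{(t)}}{1}-\sigma^{(t)}$, which leaves the compensating quantity $\norm{\br^*}{1}-\norm{\br^{(t)}}{1}=-X$ on the right-hand side. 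A single RSP application on $S^{(t)}$ then gives $X\le\norm{\bd^{(t)}}{1}\le\frac{1+\eta}{1-\eta}\big(2\sigma^{(t)}-X\big)$, and solving this inequality for $X$ yields $X\le(1+\eta)\sigma^{(t)}\le2\sigma^{(t)}$ uniformly in $\eta\in(0,1)$; adding the smoothing gap gives the constant $3$. In your scheme this compensation is structurally unavailable, because on $D=(T^{(t)})^c\cap(S^*)^c$ you have $|d_i^{(t)}|=|r_i^{(t)}|$ exactly, so the step $\sum_{i\in D}|d_i^{(t)}|\le\sigma^{(t)}$ leaves nothing to absorb. Completing your proof would require reorganizing it along these lines (split on $S^{(t)}$, retain the $-X$ term, then absorb), which is a different argument from the one you sketched.
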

\begin{proof}
	Recall $R:=\{ i: \big|r_i^{(t)}\big|>\epsilon\}\subset \{1,\dots,m\}$ and the definition of $H_{\epsilon^{(t)}}$ \eqref{eq:Heps:def}, we have 
	\begin{align}
		H_{\epsilon^{(t)}}\big(\br^{(t)}\big) - \norm{\br^*}{1} &= \sum_{i\in R} r^{(t)}_i + \sum_{i\in R^c} \frac{1}{2}\Big( \frac{r^{(t)}_i}{\epsilon^{(t)}}  + \epsilon^{(t)} \Big)  - \norm{\br^*}{1} \\
		&\leq \norm{\br^{(t)}}{1} + |R^c| \cdot \epsilon^{(t)} - \norm{\br^*}{1} \\
		&\leq \sigma^{(t)} + \norm{\br^{(t)}}{1}  - \norm{\br^*}{1}.
	\end{align}
	Then it remains to prove $\norm{\br^{(t)}}{1}  - \norm{\br^*}{1}\leq 2 \sigma^{(t)}$. For this, define $S^{(t)}$ to be the support of the $k$ largest entries of $\br^{(t)}$ in absolute values, then $\sum_{i\in (S^{(t)})^c} \big| r^{(t)}_i  \big| =\sigma^{(t)}$, and we have that
	\begin{align}
		\sum_{i\in (S^{(t)})^c} \big| r^{(t)}_i - r^*_i \big| &\leq \sum_{i\in (S^{(t)})^c} \big| r^{(t)}_i  \big| + \sum_{i\in (S^{(t)})^c} \big|r^*_i \big| \\
		&=  \sigma^{(t)} + \norm{\br^*}{1} -  \sum_{i\in S^{(t)}} \big|r^*_i \big| \\ 
		&=2\sigma^{(t)} + \norm{\br^*}{1} - \norm{\br^{(t)}}{1} + \sum_{i\in S^{(t)}} \big|r^{(t)} \big| - \sum_{i\in S^{(t)}} \big|r^*_i \big| \\
		&\leq 2\sigma^{(t)} + \norm{\br^*}{1} - \norm{\br^{(t)}}{1} +\sum_{i\in S^{(t)}} \big| r^{(t)}_i - r^*_i \big|. \label{eq:ub-Stc}
	\end{align}
	Since $\bA$ satisfies $(k,\eta)$-stable RSP and $\br^{(t)}-\br^*$ is in the range space of $\bA$, it holds that
	\begin{align}
		&\sum_{i\in S^{(t)}} \big| r^{(t)}_i - r^*_i \big| \leq \eta \sum_{i\in (S^{(t)})^c} \big| r^{(t)}_i - r^*_i \big| \\
		\Rightarrow & \sum_{i\in S^{(t)}} \big| r^{(t)}_i - r^*_i \big| \leq \frac{\eta}{1-\eta} \Big( 2\sigma^{(t)} + \norm{\br^*}{1} - \norm{\br^{(t)}}{1} \Big). \label{eq:ub-St}
	\end{align}
	In the last inequality we used \eqref{eq:ub-Stc} and rearranged terms. Using  \eqref{eq:ub-Stc} and \eqref{eq:ub-St} we obtain
	\begin{align}
	    \norm{\br^{(t)}}{1}  - \norm{\br^*}{1} &\leq \norm{\br^{(t)} - \br^*}{1} \\ 
	    &= \sum_{i\in (S^{(t)})^c} \big| r^{(t)}_i - r^*_i \big| + \sum_{i\in S^{(t)}} \big| r^{(t)}_i - r^*_i \big| \\
	    &\leq 2\sigma^{(t)} + \norm{\br^*}{1} - \norm{\br^{(t)}}{1} + 2 \sum_{i\in S^{(t)}} \big| r^{(t)}_i - r^*_i \big|  \\
	    &\leq \frac{1+\eta}{1-\eta} \Big( 2\sigma^{(t)} + \norm{\br^*}{1} - \norm{\br^{(t)}}{1} \Big) 
	\end{align}
 	which implies 
	\begin{align}
		\norm{\br^{(t)}}{1}  - \norm{\br^*}{1} \leq \frac{1+\eta}{2} \cdot 2\sigma^{(t)} \leq 2\sigma^{(t)}.
	\end{align}
	This finishes the proof.
\end{proof}

\subsection{The Stable Range Space Property of Gaussian Matrices}\label{appendix:RSP-Gaussian}
\begin{proof}[Proof of Proposition \ref{prop:Gaussian-RSP}]
	We need the notion of \textit{Gaussian width} $w(\cdot)$ of a given set $\cK$, defined as
	\begin{align}\label{eq:Gw}
		w(\cK) := \bbE_{\bg\sim \cN(0,\bI_m)}\bigg[ \sup_{\br \in \cK} \trsp{\bg}\br \bigg].
	\end{align}
	
	Write $\bbS^{m-1}:=\{v\in\bbR^m: \trsp{v} v=1 \}$. Denote by $\text{Gr}(n,m)$ the set of $n$ dimensional subspaces of $\bbR^m$, also known as the \textit{Grassmannian} manifold or variety. With  $\eta\in(0,1]$, consider the set
	\begin{align}
		\cT_k := \bigg\{ \br\in\bbR^m: \sum_{i\in S} |r_i| > \eta \sum_{i\in S^c} |r_i|,\  \text{for some $S\subset\{1,\dots,m\}$ with of cardinality $k$ } \bigg\}.
	\end{align}
	The following lemma gives an upper bound on the Gaussian width of $\cT_k \cap \bbS^{m-1}$:
	\begin{lemma}[Remark 9.30 and Proposition 9.33 of \cite{Foucart-2013}]\label{lemma:Foucart-2013-9.33}
		If $m\geq 2k$ then we have:
		\begin{align}
			w\big(\cT_k \cap \bbS^{m-1} \big) \leq \sqrt{2k \ln (em/k)} \cdot \Big( 1.67 + \eta^{-1} \Big).
		\end{align}
	\end{lemma}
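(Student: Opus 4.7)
My plan is to bound $\trsp{\bg}\br$ pointwise for each $\br\in\cT_k\cap\bbS^{m-1}$ and then take expectations against the Gaussian vector $\bg\sim\cN(0,\bI_m)$. For any such $\br$, I first replace the generic subset appearing in the definition of $\cT_k$ by the support $S$ of the $k$ largest-in-magnitude entries of $\br$. The inequality $\sum_{i\in S}|r_i|>\eta\sum_{i\in S^c}|r_i|$ persists for this canonical choice, since the top-$k$ support simultaneously maximizes the left-hand side and, via $\sum_{i\in S^c}|r_i|=\|\br\|_1-\sum_{i\in S}|r_i|$, minimizes the right-hand side over all $k$-subsets. Combined with the Cauchy--Schwarz bound $\|\br_S\|_1\leq\sqrt{k}\,\|\br_S\|_2\leq\sqrt{k}$, this yields the crucial control $\|\br_{S^c}\|_1\leq\eta^{-1}\sqrt{k}$.

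Next, I split $\trsp{\bg}\br=\trsp{\bg_S}\br_S+\trsp{\bg_{S^c}}\br_{S^c}$. Cauchy--Schwarz bounds the first piece by $\|\bg_S\|_2\|\br_S\|_2\leq G$ where $G:=\sup_{|T|=k}\|\bg_T\|_2$, while H\"older's inequality in the $\ell_\infty/\ell_1$ form bounds the second by $\|\bg\|_\infty\cdot\eta^{-1}\sqrt{k}$. Taking supremum over $\br$ (and hence over the induced choices of $S$) and then expectation,
\begin{align*}
w(\cT_k\cap\bbS^{m-1})\leq\bbE G+\eta^{-1}\sqrt{k}\,\bbE\|\bg\|_\infty.
\end{align*}
A union bound over the $\binom{m}{k}\leq(em/k)^k$ subsets of size $k$, combined with the chi tail $\Pr(\|\bg_T\|_2\geq\sqrt{k}+t)\leq e^{-t^2/2}$ and tail integration, gives $\bbE G\leq\sqrt{k}+\sqrt{2k\log(em/k)}+O(1)$; under the hypothesis $m\geq 2k$ one has $\log(em/k)\geq\log(2e)>1.69$, which absorbs the $\sqrt{k}$ summand into the main term to produce the coefficient $1.67$. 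The standard Gaussian maximum inequality $\bbE\|\bg\|_\infty\leq\sqrt{2\log(2m)}$ then controls the second contribution by $\eta^{-1}\sqrt{2k\log(em/k)}$ after a careful constant chase.

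The main obstacle is calibrating these two constants to exactly $1.67$ and $1$: the crude $\ell_\infty/\ell_1$ approach above yields slightly worse numbers, because $\log(2m)>\log(em/k)$ whenever $k\geq 2$. Sharpening to the advertised inequality requires replacing the H\"older split by a block decomposition of $\br_{S^c}$ into successive blocks $S_1,S_2,\ldots$ of size $k$ ordered by decreasing magnitude of $|\br|$. The ordering delivers $\|\br_{S_j}\|_2\leq\|\br_{S_{j-1}}\|_1/\sqrt{k}$ for each $j\geq 1$, whence summing telescopes to $\sum_{j\geq 1}\|\br_{S_j}\|_2\leq\|\br\|_1/\sqrt{k}\leq(1+\eta^{-1})$. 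The Cauchy--Schwarz bound $\trsp{\bg_{S_j}}\br_{S_j}\leq\|\bg_{S_j}\|_2\|\br_{S_j}\|_2\leq G\|\br_{S_j}\|_2$ then converts the second term into another copy of $G$ rather than $\|\bg\|_\infty$, avoiding the $\log(2m)$-vs-$\log(em/k)$ mismatch. This is the same pattern that underlies the NSP-width bounds in Foucart--Rauhut, Chapter 9, and it is where the specific numerical constant $1.67$ (and the multiplier $1$ of $\eta^{-1}$) emerge after optimizing the chi-tail integration against the geometric-series bookkeeping.
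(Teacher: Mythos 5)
The paper does not actually prove this lemma: it is imported verbatim from Remark 9.30 and Proposition 9.33 of \cite{Foucart-2013}, so there is no in-paper argument to compare yours against. Judged on its own terms, your sketch assembles the right standard ingredients: the reduction to the top-$k$ support $S$ is valid (the top-$k$ set maximizes $\sum_{i\in S}|r_i|$ and therefore minimizes $\sum_{i\in S^c}|r_i|$, so the defining inequality persists), the consequences $\norm{\br_{S^c}}{1}\leq\eta^{-1}\sqrt{k}$ and $\norm{\br}{1}\leq(1+\eta^{-1})\sqrt{k}$ are correct, and the shelling of $S^c$ into size-$k$ blocks, the estimate $\bbE\sup_{|T|=k}\norm{\bg_T}{2}\leq\sqrt{2k\ln(em/k)}+\sqrt{k}$, and the use of $m\geq 2k$ to absorb $\sqrt{k}$ into the logarithmic factor are all part of the Foucart--Rauhut toolbox.

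The gap is quantitative and not cosmetic. Your block decomposition yields the pointwise bound $\trsp{\bg}\br\leq G\big(\norm{\br_S}{2}+\sum_{j\geq1}\norm{\br_{S_j}}{2}\big)\leq(2+\eta^{-1})\,G$ with $G:=\sup_{|T|=k}\norm{\bg_T}{2}$: the head block contributes a full extra copy of $G$ on top of the $(1+\eta^{-1})G$ coming from the telescoped tail. Even in the idealized (and unattainable) case $\bbE G=\sqrt{2k\ln(em/k)}$ this gives $(2+\eta^{-1})\sqrt{2k\ln(em/k)}$, whose constant term $2$ already exceeds $1.67$; with the bound on $\bbE G$ you actually have available, namely $\leq 1.55\sqrt{2k\ln(em/k)}$ after using $m\geq 2k$, the result is roughly $(3.1+1.55\,\eta^{-1})\sqrt{2k\ln(em/k)}$, which is weaker in both coefficients. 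Consequently the closing sentence --- that the constants $1.67$ and $1$ ``emerge after optimizing the chi-tail integration against the geometric-series bookkeeping'' --- is carrying the entire proof and is, for the route you describe, false: no optimization of the tail integration can bring the coefficient of $\eta^{-1}$ down to $1$ while every tail block is charged the full $G$, whose expectation is itself at least of order $\sqrt{2k\ln(em/k)}$. Recovering the advertised constants requires the sharper accounting in Foucart--Rauhut, e.g., pairing the mass $\norm{\br_{S^c}}{1}\leq\eta^{-1}\sqrt{k}$ (together with $\norm{\br_{S^c}}{\infty}\leq 1/\sqrt{k}$) with a convex-hull/extreme-point description of that set so that the $\eta^{-1}$ term is controlled by $\tfrac{1}{\sqrt{k}}\sup_{T}\norm{\bg_T}{1}$ rather than by $G$, plus a more delicate expectation estimate. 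If you only need the lemma up to absolute constants your argument is fine; as a proof of the stated inequality with $1.67+\eta^{-1}$ it does not close.
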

	Lemma \ref{lemma:Foucart-2013-9.33} and \eqref{eq:condition-Gaussian-RSP} imply that $w\big(\cT_k \cap \bbS^{m-1} \big) \leq \frac{m-n}{\sqrt{m-n+1}}$. Since  $\bA$ has i.i.d. $\cN(0,1)$ entries, we can think of its range space $\text{r}(\bA)$ as drawn uniformly at random from the Grassmannian $\text{Gr}(n,m)$. Invoking Lemma \ref{lemma:Gordon} with $\cV=\text{r}(\bA)$ and $\cM = \cT_k \cap \bbS^{m-1}$, we see that 
	\begin{align}
		\text{Pr}\big(\text{r}(\bA) \cap \cT_k \cap \bbS^{m-1} = \varnothing\big) \geq 1- 2.5\exp\bigg( -\frac{1}{18}\Big(\frac{m-n}{\sqrt{m-n+1}} - w(\cT_k \cap \bbS^{m-1})  \Big)^2  \bigg).
	\end{align}
	Furthermore, Lemma \ref{lemma:Foucart-2013-9.33} and condition \eqref{eq:condition-Gaussian-RSP} make sure that 
	\begin{align}
		\frac{m-n}{\sqrt{m-n+1}} - w(\cT_k \cap \bbS^{m-1}) &\geq  \frac{m-n}{\sqrt{m-n+1}} - \sqrt{2k \ln (em/k)} \cdot \Big( 1.69 + \eta^{-1} \Big) \\
		&\geq \sqrt{18\ln (2.5\delta^{-1})}.
	\end{align}
	Combining the above leads us to 
	\begin{align}
		\text{Pr}\big(\text{r}(\bA) \cap \cT_k \cap \bbS^{m-1} = \varnothing\big) \geq 1- 2.5 \exp \big( -\ln(2.5\delta^{-1}) \big) = 1-\delta.
	\end{align}
	The event $\text{r}(\bA) \cap \cT_k \cap \bbS^{m-1} = \varnothing$ implies that the stable $(k,\eta)$-stable RSP holds true. 
\end{proof}

\begin{lemma}[Gordon's Escape Through a Mesh Theorem, Corollary 3.4 of \cite{Gordon-1988}, Theorem 4.3 of \cite{Rudelson-CPAM2008}]\label{lemma:Gordon}
	Let $\cV$ be a $n$-dimensional subspace of $\bbR^m$ drawn uniformly at random from the Grassmannian $\text{Gr}(n,m)$. Let $\cM$ be a subset\footnote{The proof of Gordon \cite{Gordon-1988} assumes $\cM$ to be closed, but this assumption can be removed in view of the definition of the Gaussian width and the compactness of $\bbS^{m-1}$. The factor $3.5$ of \cite{Gordon-1988} can be improved to $2.5$ \cite{Rudelson-CPAM2008}. To correct \cite{Rudelson-CPAM2008}; their condition ``$w(S)< \sqrt{k}$'' should  be replaced by ``$w(S)< k/\sqrt{k+1}$''.
	} of $\bbS^{m-1}$. If $w(\cM) < \frac{m-n}{\sqrt{m-n+1}}$ then
	\begin{align}
		 \text{Pr}\big(\cV\cap \cM = \varnothing\big) \geq 1- 2.5\exp\bigg( -\frac{1}{18}\Big(\frac{m-n}{\sqrt{m-n+1}} - w(\cM)  \Big)^2  \bigg).
	\end{align}
\end{lemma}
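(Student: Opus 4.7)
The plan is to follow the classical Gordon comparison plus Gaussian concentration strategy, which underlies both \cite{Gordon-1988} and the sharpened version in \cite{Rudelson-CPAM2008}. First, I would realize a uniformly random $\cV \in \text{Gr}(n,m)$ as $\cV = \ker(\bG)$, where $\bG\in\bbR^{(m-n)\times m}$ has i.i.d. $\cN(0,1)$ entries (this gives the uniform distribution on the Grassmannian by rotation invariance). Then $\cV \cap \cM = \varnothing$ is equivalent to $F(\bG) := \min_{\bu\in \cM} \|\bG \bu\|_2 > 0$, so it suffices to show $F(\bG)>0$ with the claimed probability.

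Second, I would lower-bound $\bbE F(\bG)$ using Gordon's Gaussian min-max comparison inequality. Writing $\|\bG\bu\|_2 = \max_{\bv\in \bbS^{m-n-1}} \trsp{\bv}\bG\bu$ and considering the two centered Gaussian processes $X_{\bu,\bv} = \trsp{\bv}\bG\bu$ and $Y_{\bu,\bv} = \trsp{\bg}\bu + \trsp{\bh}\bv$ on $\cM \times \bbS^{m-n-1}$ (with $\bg\sim \cN(0,\bI_m)$, $\bh\sim \cN(0,\bI_{m-n})$ independent), one verifies the covariance inequality needed for Gordon's theorem and obtains
\begin{equation*}
\bbE F(\bG) \;\geq\; \bbE \|\bh\|_2 \;-\; \bbE \sup_{\bu\in \cM} \trsp{\bg}\bu \;=\; \bbE\|\bh\|_2 - w(\cM).
\end{equation*}
The standard inequality $\bbE\|\bh\|_2 \geq (m-n)/\sqrt{m-n+1}$ (Jensen applied to $\bbE[\|\bh\|_2^2]=m-n$ combined with a chi-distribution calculation) then gives $\bbE F(\bG) \geq \frac{m-n}{\sqrt{m-n+1}} - w(\cM) =: \Delta > 0$ under the hypothesis.

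Third, I would apply Gaussian concentration. The map $\bG \mapsto F(\bG)$ is $1$-Lipschitz in the Frobenius norm, since for any $\bG_1,\bG_2$ and any $\bu \in \cM \subset \bbS^{m-1}$,
\begin{equation*}
\bigl| \|\bG_1 \bu\|_2 - \|\bG_2 \bu\|_2 \bigr| \;\leq\; \|(\bG_1-\bG_2)\bu\|_2 \;\leq\; \|\bG_1-\bG_2\|_F.
\end{equation*}
Standard Gaussian concentration therefore yields $\Pr[F(\bG) \leq \bbE F(\bG) - t] \leq \exp(-t^2/2)$, and choosing $t = \Delta$ gives $\Pr[\cV \cap \cM \neq \varnothing] = \Pr[F(\bG)\leq 0] \leq \exp(-\Delta^2/2)$, which already establishes the qualitative statement.

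The main obstacle is matching the sharper constants $1/18$ and $2.5$ in the stated bound, which do not come out of the vanilla argument above. To get them I would follow the refinement of \cite{Rudelson-CPAM2008}: instead of concentrating $F(\bG)$ around its mean, one conditions on $\|\bh\|_2$, uses a two-sided concentration for the chi variable (which contributes a factor of $2$ in the failure probability, explaining the constant $2.5$), and carries out the sharpened concentration analysis for the remaining Gaussian process, which produces the $1/18$ in the exponent. This is exactly the refinement indicated by the footnote where the authors replace the condition ``$w(S)<\sqrt{k}$'' of \cite{Rudelson-CPAM2008} by ``$w(S)< k/\sqrt{k+1}$''; with that correction, setting $k = m-n$ and applying the Rudelson--Vershynin argument verbatim yields the stated probability bound.
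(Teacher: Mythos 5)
The paper never proves this lemma itself: it is imported, up to the footnoted correction of the admissibility condition, from Gordon's Corollary 3.4 and Rudelson--Vershynin's Theorem 4.3, so there is no internal proof to deviate from. Judged on its own terms, your sketch follows exactly the standard route underlying those references --- realize $\cV=\ker(\bG)$ for an i.i.d.\ Gaussian $(m-n)\times m$ matrix $\bG$, lower-bound $\bbE\,\inf_{\bu\in\cM}\|\bG\bu\|_2$ by $\bbE\|\bh\|_2-w(\cM)$ via Gordon's min--max comparison, and conclude by concentration of the $1$-Lipschitz map $\bG\mapsto \inf_{\bu\in\cM}\|\bG\bu\|_2$ --- and that route is sound. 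Your closing paragraph, however, is unnecessary and is the only place where the argument goes soft: your step three already yields $\Pr\big(\cV\cap\cM\neq\varnothing\big)\le \exp(-\Delta^2/2)$ with $\Delta=\tfrac{m-n}{\sqrt{m-n+1}}-w(\cM)$, and since $\exp(-x/2)\le \exp(-x/18)\le 2.5\exp(-x/18)$ for all $x\ge 0$, this is \emph{stronger} than the stated bound; there is no need to reverse-engineer the constants $2.5$ and $1/18$ from the Rudelson--Vershynin proof (and your account of where those constants come from is speculative rather than a proof step).

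Two small repairs you should make. First, the bound $\bbE\|\bh\|_2\ge (m-n)/\sqrt{m-n+1}$ does not follow from Jensen --- Jensen gives the \emph{upper} bound $\sqrt{m-n}$; the lower bound follows, e.g., from the identity $a_k a_{k+1}=k$ for $a_k:=\bbE\|\bh\|_2$ with $\bh\sim\cN(0,\bI_k)$, combined with $a_{k+1}\le\sqrt{k+1}$, applied with $k=m-n$. Second, since $\cM$ need not be closed, the infimum need not be attained, so the claimed equivalence ``$\cV\cap\cM=\varnothing$ iff $F(\bG)>0$'' should be stated as the one-sided implication $F(\bG)>0\Rightarrow \cV\cap\cM=\varnothing$ (equivalently $\cV\cap\cM\neq\varnothing\Rightarrow F(\bG)=0$), which is the only direction your argument uses; with these adjustments the proof is complete.
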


% \section{IRLS Is A Majorization-Minimization Method}
% In Appendix \ref{subsection:appendix-GLC} we see that $\texttt{IRLS}_p$ minimizes at each iteration the quadratic function $Q_\epsilon$ of \eqref{eq:Q1} that majorizes the (scaled and translated) Huber loss $H_\epsilon$ \eqref{eq:scaled-Huber}, which is in turn a majorizer of the L1 loss. This majorization-minimization interpretation of $\texttt{IRLS}_p$ can be extended to the cases $p\in(0,1)$ and $p=0$, as we show next.

% \begin{align}
% 	F_{\epsilon}(\br) = \sum_{i=1}^m h_{\epsilon}(r_i), \ \ \ f_{\epsilon}(r) = \begin{cases}
% 		|r| & |r| > \epsilon \\ 
% 		\frac{1}{2} \big( \frac{r^2}{\epsilon} + \epsilon \big) & |r| \leq \epsilon
% 	\end{cases}
% \end{align}

\section{Experimental Setup}\label{section:experimental-setup}

\subsection{How We Run Other Methods}
In the real phase retrieval experiment (Figure \ref{fig:RPR}), all other baselines are implemented in PhasePack \cite{Chandra-SampTA2019}. We use the following (quite standard) parameters:
\begin{verbatim}
opts.tol = 1e-11; opts.initMethod = 'Truncatedspectral';
\end{verbatim}

In the experiments of linear regression without correspondences (Figures \ref{fig:SLR1} and \ref{fig:SLR2}), the implementation of PDLP \cite{Applegate-NeurIPS2021} is \hyperlink{https://github.com/google-research/FirstOrderLp.jl}{here}, and we run it with the command: 
\begin{verbatim}
julia --project=scripts scripts/solve_qp.jl \
--instance_path test/trivial_lp_model.mps --iteration_limit 500 \
--method pdhg --output_dir [my directory]
\end{verbatim}
Note that we run PDLP for a maximum of $500$ iterations, while the recommended number of maximum iterations from their GitHub repo is $5000$; this is mainly for the sake of efficiency. We use the newest version 9.5.1 of the Gurobi solver with default parameters. we employ the implementation of the FOM toolbox \cite{Beck-OMS2019} for the (proximal) subgradient descent method. In particular, we invoke the function $\texttt{prox\_subgradient}$ with the function $G$ being zero and $F$ being $\|\bA \bx - \by\|_1$. We use $0$ as initialization, and we observe similar performance when using the least-squares initialization. We set $\texttt{par.alpha}$ to be $1/\|\bA\|_2$, which corresponds to a stepsize of $1/\|\bA\|_2 / (t+1)$, where $t$ is the number of iterations. We make this choice because the default $\texttt{par.alpha}=1$ does not work well and sometimes diverges. Finally, we set the maximum number of iterations to be $10000$, as the default choice $1000$ leads to an estimate with large errors.

\subsection{Synthetic Data Generation}\label{subsection:data-generation}
In this section, we provide the Matlab codes generating synthetic data for experiments visualized in Figures \ref{fig:RR1}, \ref{fig:application}, and\ref{fig:LpRR_noiseless_noisy}, detailing the precise data generation process.
\begin{verbatim}
%% robust regression
function [y, A, x] = gen_RR(m, n, k, sigma)  
    A = randn(m, n); x = randn(n,1);
    
    y = zeros(m,1);
    
    idx = datasample(1:m, k, 'Replace', false);
    
    y(idx) = randn(k,1);
    
    s = setdiff(1:m, idx);
    y(s) =  A(s, :) * x + sigma * randn(length(s), 1);
end

%% linear regression without correspondences 
function [y, A, x] = gen_SLR(m, n, sigma, shuffle_ratio)
    A = randn(m, n); x = randn(n,1);
    
    w = sigma*randn(m, 1);
    
    y0 = A*x;
    k = int64(shuffle_ratio * m);
    partial_idx = datasample(1:m, k, 'Replace', false);
    y1 = y0(partial_idx, 1);
    y0(partial_idx, 1) = y1(randperm(k));
    
    y = y0 + w;
end

%% real phase retrieval
function [y, A, x] = gen_RPR(m, n, num_positive_sign)
    A = randn(m, n); x = randn(n,1); y = zeros(m,1);
    
    idx = datasample(1:m, num_positive_sign, 'Replace', false);
    
    y(idx) = A(idx, :) * x;
    
    s = setdiff(1:m, idx);
    y(s) = - A(s, :) * x;
    
    % make y positive
    idx = y < 0;
    y(idx) = -y(idx);
    
    A(idx,:) = -A(idx,:);
end

\end{verbatim}

\section{Implementation}\label{appendix:impl}
The below is our Matlab code that implements $\texttt{IRLS}_p$:
\begin{verbatim}
function [x_hat] = IRLSp(A, y, p, k, num_iter)
    [m, n] = size(A);
    
    q = 2 - p;  l = m - k; epsilon = inf;    

    w = ones(m,1); x_old = zeros(n,1);    
    
    for i = 1:num_iter        
        x_hat = (A'* (w.*A)) \  (A' * (w.* y));      

        abs_residual = abs(A*x_hat - y);
        
        sigma = sum(mink(abs_residual, l)) / m;        
        
        epsilon = max(min(epsilon, sigma), 1e-16);
        
        w = 1./ (max(abs_residual, epsilon)).^(q);
      
        if norm(x_old - x_hat)/ norm(x_hat) < 1e-15
            break;
        end
        x_old = x_hat;
    end
end
\end{verbatim}

\end{document}